\pdfoutput=1
\documentclass[11pt, a4paper, oneside, reqno]{amsart}
\usepackage[english]{babel}
\usepackage[T1]{fontenc}
\swapnumbers
\usepackage{booktabs}

\newcommand{\bfA}{{\bf A}}
\newcommand{\bfB}{{\bf B}}
\usepackage[usenames,dvipsnames]{xcolor}
\usepackage[colorlinks=true,linkcolor=NavyBlue,urlcolor=RoyalBlue,citecolor=PineGreen,
	hypertexnames=false]{hyperref}
\usepackage{amsfonts}
\usepackage{amsmath}
\usepackage{amssymb}
\usepackage{amsthm}
\usepackage{latexsym}
\usepackage{stmaryrd}
\usepackage{quiver}
\usepackage{mathtools}
\usepackage{bm}

\usepackage{mathbbol}
\usepackage{shuffle}
\usepackage{rotating}
\usepackage{tikz}
\usepackage{tikz-cd} \usepackage{subfigure}
\usepackage{verbatim}
\usepackage[margin=0.7in]{geometry}
\usepackage[font=small]{caption}
\usepackage{adjustbox}
\usepackage{enumerate}
\usepackage{cleveref}
\usepackage{svg}
\usepackage{tcolorbox}
\usepackage[tracking=smallcaps,expansion=alltext,protrusion=true]{microtype}\SetTracking[spacing={25*,166,}]{encoding=*,shape=sc}{50}

\usepackage[linecolor=white,backgroundcolor=white,bordercolor=white,textsize=tiny]{todonotes}

\usepackage[linecolor=white,backgroundcolor=white,bordercolor=white,textsize=tiny]{todonotes}

\makeatletter
\setcounter{tocdepth}{3}

\renewcommand{\tocsection}[3]{%
	\indentlabel{\@ifnotempty{#2}{\bfseries\ignorespaces#1 #2.\,\,}}\bfseries#3}
\renewcommand{\tocsubsection}[3]{%
	\indentlabel{\@ifnotempty{#2}{\ignorespaces#1 #2\quad}}#3}
\renewcommand{\tocsubsubsection}[3]{%
	\quad\quad\quad\indentlabel{\@ifnotempty{#2}{\ignorespaces#1 #2\quad}}#3}

\newcommand\@dotsep{4.5}
\def\@tocline#1#2#3#4#5#6#7{\relax
	\ifnum #1>\c@tocdepth 
	\else
		\par \addpenalty\@secpenalty\addvspace{#2}%
		\begingroup \hyphenpenalty\@M
		\@ifempty{#4}{%
			\@tempdima\csname r@tocindent\number#1\endcsname\relax
		}{%
			\@tempdima#4\relax
		}%
		\parindent\z@ \leftskip#3\relax \advance\leftskip\@tempdima\relax
		\rightskip\@pnumwidth plus1em \parfillskip-\@pnumwidth
		#5\leavevmode\hskip-\@tempdima{#6}\nobreak
		\leaders\hbox{$\m@th\mkern \@dotsep mu\hbox{.}\mkern \@dotsep mu$}\hfill
		\nobreak
		\hbox to\@pnumwidth{\@tocpagenum{\ifnum#1=1\bfseries\fi#7}}\par
		\nobreak
		\endgroup
	\fi}
\AtBeginDocument{%
	\expandafter\renewcommand\csname r@tocindent0\endcsname{0pt}
}

\def\l@subsection{\@tocline{2}{0pt}{2.5pc}{5pc}{}}
\makeatother


\textwidth 17.5cm
\textheight 24cm
\topmargin -0.2cm
\oddsidemargin -0.4cm
\evensidemargin -0.4cm
\hoffset -0.5cm


\DeclareFontFamily{U}{MnSymbolC}{}
\DeclareFontShape{U}{MnSymbolC}{m}{n}{
	<-5.5> MnSymbolC5
	<5.5-6.5> MnSymbolC6
	<6.5-7.5> MnSymbolC7
	<7.5-8.5> MnSymbolC8
	<8.5-9.5> MnSymbolC9
	<9.5-11.5> MnSymbolC10
	<11.5-> MnSymbolCb12
}{}


\let\todon\todo
\renewcommand{\todo}[1]{\todon[color=green!40]{\color{NavyBlue}{#1}}}




\def\1{\mathbf{1}}

\newtheoremstyle{bfnote}%
{}{}%
{\itshape}{}%
{\sf}{.}%
{ }%
{\thmname{#1}\thmnumber{ #2}\thmnote{ (#3)}}

\theoremstyle{theorem}
\newtheorem{theorem}{Theorem}[subsection]
\newtheorem{example}[theorem]{Example}
\newtheorem{proposition}[theorem]{Proposition}
\newtheorem*{proposition*}{Proposition}
\newtheorem{corollaire}[theorem]{Corollary}
\newtheorem*{theorem*}{Theorem}
\newtheorem{lemma}[theorem]{Lemma}
\newtheorem{remark}[theorem]{Remark}
\newtheorem{definition}[theorem]{Definition}

\newcommand{\Mm}{\mathcal{M}_{N}(\mathbb{C})}

\newcommand{\trn}{\mathrm{tr}_{N}}

\newcommand{\Tr}{\mathrm{Tr}}
\newcommand{\Trn}{\mathrm{Tr}_{N}}
\newcommand{\Trnm}{\mathrm{Tr}_{N-1}}


\author[G. C\'ebron]{Guillaume C\'ebron}\thanks{G.C. is supported by the Project MESA (ANR-18-CE40-006) and by the Project STARS (ANR-20-CE40-0008) of the French National Research Agency (ANR)}
\address{Institut de Mathématiques de Toulouse; UMR5219; Université de Toulouse; CNRS; UPS, F-31062 Toulouse, France}
\email{guillaume.cebron@math.univ-toulouse.fr}
\author[N. Gilliers]{Nicolas Gilliers}\thanks{N.G. is supported by the Project STARS (ANR-20-CE40-0008) of the French National Research Agency (ANR)}
\address{Institut de Mathématiques de Toulouse; UMR5219; Université de Toulouse; CNRS; UPS, F-31062 Toulouse, France}
\email{nicolas.gilliers@math.univ-toulouse.fr}
\thanks{We thank Octavio Arizmendi for useful discussions about cyclic independences}
\title{Asymptotic cyclic-conditional freeness of random matrices}
\usepackage[sc]{mathpazo}
\begin{document}
\maketitle
\begin{abstract}
Voiculescu's freeness emerges in computing the asymptotic of spectra of polynomials on $N\times N$ random matrices with eigenspaces in generic positions: they are randomly rotated with a uniform unitary random matrix~$U_N$. In this article we elaborate on the previous point by proposing a random matrix model, which we name the \emph{Vortex model}, where $U_N$ has the law of a uniform unitary random matrix conditioned to leave invariant one deterministic vector~$v_N$. In the limit $N \to +\infty$, we show that $N\times N$ matrices randomly rotated by the matrix $U_N$ are \emph{asymptotically conditionally free} with respect to the normalized trace and the state vector $v_N$. To describe second order asymptotics, we define \emph{cyclic-conditional freeness}, a new notion of independence unifying \emph{infinitesimal freeness}, \emph{cyclic-monotone independence} and \emph{cyclic-Boolean independence}. The infinitesimal distribution in the Vortex model can be computed thanks to this new independence. Finally, we elaborate on the Vortex model in order to build random matrix models for \emph{ordered freeness} and for \emph{indented independence}.
\end{abstract}

\tableofcontents

\section{Introduction}

The main contributions of this article are two folds. Our first concern deals with a matricial model displaying asymptotic conditional freeness (both scalar and operator-valued), presented below under the name of \emph{Vortex Model} for the normalized trace. 
Analyzing more in details this model, in particular the infinitesimal distributions, leads us to introducing a new notion of non-commutative independence generalizing cyclic-Boolean, cyclic-monotone and infinitesimal freeness. This makes our second point.


\subsection{Background}

Let $\mathcal{M}_N(\mathbb{C})$ be the set of all $N\times N$ matrices with complex entries, and consider a sequence of  deterministic matrices $A_N \in \mathcal{M}_N(\mathbb{C})$ and $B_N \in \mathcal{M}_{N}(\mathbb{C})$ $(N\geq 1)$ \emph{bounded in operator-norm} uniformly in $N$. Given a sequence of uniform unitary random matrices $U_N$ (i.e. distributed according to Haar measure on the compact group $U(N)$ of all \emph{complex} unitary matrices), Voiculescu’s asymptotic freeness \cite{voiculescu1991limit} for random matrices states that, almost surely, the matrices $A_N$ and $U_NB_NU_N^\star$ are asymptotically free with respect to the normalized trace $\trn:M\mapsto \frac{1}{N}\Trn(M)$ as $N \to \infty$. Recently, Dahlqvist, Gabriel and the first author \cite{cebron2022freeness} obtained an extension to this result by considering a vector state $\varphi^{v_N}:M\mapsto \langle M v_N,v_N\rangle$ for a sequence of deterministic vectors $v_N\in \mathbb{C}^N$.
Interestingly, they observed that in order to describe the asymptotic behavior of $A_N$ and $U_NB_NU_N^\star$ with respect to $\varphi^{v_N}$, \emph{conditional freeness} (or for short \emph{c-freeness}) defined by Bo{\.z}ejko and Speicher \cite{bozejko1991independent,bozejko1996convolution} is pertinent: almost surely, the matrices $A_N$ and $U_NB_NU_N^\star$ are asymptotically c-free with respect to $(\trn,\varphi^{v_N})$ as $N \to \infty$. The distributions of $A_N$ with respect to $\trn$ and to $\varphi^{v_N}$ can of course be very different, in contrast with the fact that the distributions of $U_NB_NU_N^\star$ with respect to $\trn$ and to $\varphi^{v_N}$ can not be distinguished as $N \to \infty$. This is the main limitations of the above mentioned result. In fact, this "incomplete" random matrix model of c-freeness yields a random matrix model displaying asymptotic \emph{monotone independence} of Muraki~\cite{muraki2001monotonic} whenever one sequence of matrices has eigenvalues accumulating at $0$, but certainly not \emph{Boolean independence} of Bo{\.z}ejko~\cite{bozejko1986positive}. It brings about the question of finding a random matrix model for c-freeness overcoming these limitations.

On the other hand, Dahlqvist, Male and the first author \cite[Example 9.3]{cebron2016traffic} has shown that Boolean independence  with respect to $\varphi^{v_N}$ can emerge asymptotically for randomly rotated matrices $U_NB_NU_N^\star$ under the condition that the unitary matrix $U_N$ involved leaves invariant $v_N$ (in \cite[Example 9.3]{cebron2016traffic}, the particular case under study is the case where $U_N$ is a permutation matrix and $v_N$ is a scaled all-ones vector). This observation provides a good hint at a random matrix model displaying asymptotic c-freeness: one of the state (the $\psi$-state under the notations in use in \cite{bozejko1991independent} should be the asymptotic joint distribution of $A_N$ and $U_NB_NU_N^\star$ with respect to vector-state $\varphi^{v_N}$ where $U_N$ is a uniform unitary random matrix conditioned to leave invariant $v_N$. 

The concept of \emph{cyclic-monotone independence} was introduced by Collins, Hasebe and Sakuma in \cite{collins2018free} (see also \cite{arizmendi2022cyclic} for a modified version restoring associativity). It is apparently similar to monotone independence, but involves \emph{two} tracial linear functionals: one of the two is a state and no assumptions are made on the second (recall that for monotone independence, one deals with one state). Let $({\bf A}_N)_{N\geq 1}$ and $({\bf B}_N)_{N\geq 1}$ be two sequences of families of $N\times N$ deterministic matrices, bounded in operator-norm. If we assume that the distribution of ${\bf A}_N$ with respect to the normalized trace $\trn$ converges and that the distribution of  ${\bf B}_N$ with respect to the un-normalized trace $\Trn$ converges as $N$ tends to infinity, then the un-normalized trace of any monomial in ${\bf A}_N$ and $U_N{\bf B}_NU_N^{\star}$ converges provided that it contains at least one matrix in the ensemble ${\bf A}_N$. Cyclic-monotone independence is a set of algebraic rules that were designed to abstract computation of asymptotic of the mixed moments in ${\bf A}_N$ and $U_N{\bf B}_NU_N^{\star}$.

In \cite{arizmendi2022cyclic}, Arizmendi, Hasebe and Lehner introduced an operatorial model to cyclic-monotone independence, which they leverage to introduce the new concept of \emph{cyclic-boolean independence}. They proceeded with defining the appropriate linearization transforms (cumulants) together with a central limit theorem.

We move on to describing our model, the \emph{Vortex model}.

\subsection{The Vortex model}Let $v_N \in \mathbb{C}^N$ (with $N>1$) be a deterministic sequence of unitary vectors.
The matricial model at stakes in this article deals with the asymptotics in high dimension of $A_N$ and $U_N B_N U_N^\star$ (with $N>1$), where $A_N$ and $B_N$ are matrices drawn as above from sequences of ensembles of $N\times N$ deterministic matrices ${\bf A}_N$ and ${\bf B}_N$ bounded in operator-norm but $U_N$ is now a sequence of \emph{uniform unitary random matrices conditioned to leave invariant} $v_N$. The group of unitary matrices of size $N\times N$ leaving invariant $v_N$ is isomorphic (via restriction/corestriction to the orthogonal complement of $\langle v_N\rangle$) to the group of unitary matrices of size $(N-1)\times (N-1)$ and is therefore equipped with Haar measure. Our sequence of unitary matrices $U_N$ is distributed according to this Haar measure.
This model is called the \emph{Vortex model} as the whole space is rotated around one axis given by $v_N$. 


\subsection{Main results}
We prove the following results about the Vortex model:
\begin{enumerate}
\item The ensemble of matrices ${\bf A}_N$ and the randomly rotated ensemble of matrices $ U_N{\bf B}_NU^{\star}_N$ are \emph{asymptotically c-free} with respect to $(\trn,\varphi^{v_N})$ as $N \to \infty$ (see Theorem~\ref{th:cfreeness_almost_sure}). More precisely, for any polynomial $P$ in non-commuting variables, we prove that, as $N$ tends to infinity,
$$
\frac{1}{N}\mathrm{Tr}[P({\bf A}_N,U_N{\bf B}_NU_N^{\star})] = \psi_{{\bf A}_N} * \psi_{{\bf B}_N}(P) + o(1)
$$
almost surely, where $*$ denotes the free product of unital linear functionals (see Section~\ref{Def:freeness}), and
$$
\langle P({\bf A}_N,U_N{\bf B}_NU^{\star}_N) v_N, v_N \rangle = \varphi_{{\bf A}_N}^{v_N} \prescript{}{\psi_{{\bf A}_N}\!\!}{*}^{}_{\psi_{{\bf B}_N}} \varphi_{{\bf B}_N}^{v_N}(P) + o(1)
$$
almost surely, where $\prescript{}{\psi_{{\bf A}_N}\!\!}{*}^{}_{\psi_{{\bf B}_N}}$ denotes the c-free product of unital linear functionals (see Section~\ref{Def:freeness}).
\item We introduce a new independence we name \emph{cyclic-conditional freeness} (or \emph{cyclic c-freeness} for short) between triples of linear functionals. We recover as special cases \emph{infinitesimal freeness} of Belinschi and Shlyakhtenko \cite{belinschi2012free},   \emph{cyclic-monotone independence} of Collins, Hasebe and Sakuma \cite{collins2018free} and \emph{cyclic-Boolean independence} of Arizmendi, Hasebe and Lehner \cite{arizmendi2022cyclic} (see Section~\ref{Sec:cfreeness} and Figure~\ref{Figureone}). Cyclic-conditional independence yields a new associative product on unital algebras equipped with two unital linear forms $\psi,\varphi$ and a \emph{tracial} linear form $\omega$,
\begin{equation*}
    (\mathcal{A}_1,\psi_1,\varphi_1,\omega_1)\circledast (\mathcal{A}_2,\psi_2,\varphi_2,\omega_2) = (\mathcal{A}_1\star \mathcal{A}_2, \psi_1*\psi_2, \varphi_1\prescript{}{\psi_1}{*}^{}_{\psi_2}\varphi_2, \omega_1\prescript{\psi_1}{\varphi_1}{\circledast}_{\varphi_2}^{\psi_2}\omega_2).
\end{equation*}
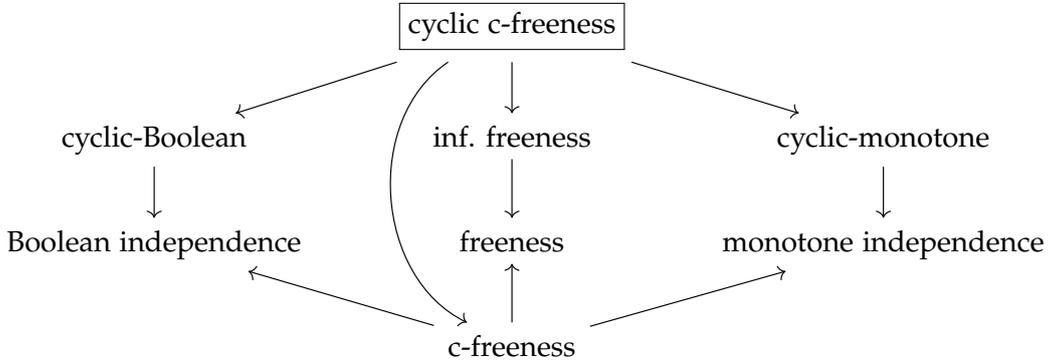
\begin{figure}[!h]
\begin{tikzcd}[ampersand replacement=\&]
		\& {\textrm{\framebox{cyclic c-freeness}}} \\
		{\textrm{cyclic-Boolean}} \& {\textrm{inf. freeness}} \& {\textrm{cyclic-monotone}} \\
		{\textrm{Boolean independence}} \& {\textrm{freeness}} \& {\textrm{monotone independence}} \\
		\& {\textrm{c-freeness}}
		\arrow[from=1-2, to=2-2]
		\arrow[from=1-2, to=2-3]
		\arrow[from=1-2, to=2-1]
		\arrow[from=2-3, to=3-3]
		\arrow[from=2-1, to=3-1]
		\arrow[from=4-2, to=3-1]
		\arrow[from=4-2, to=3-3]
		\arrow[from=4-2, to=3-2]
		\arrow[bend right=60,from=1-2, to=4-2]
		\arrow[from=2-2, to=3-2]
	\end{tikzcd}
	\caption{Each arrow means that the initial independence generalizes the terminal one. \label{Figureone}}
\end{figure}
\item The infinitesimal distribution of ${\bf A}_N$ and $U_N{\bf B}_NU_N^\star$ is determined by cyclic-conditional freeness (see Theorem~\ref{th:cycliccfreeness}). More precisely, let us assume the existence of an $\frac{1}{N}$-expansion of $\psi_{{\bf A}_N}$ and  $\psi_{{\bf B}_N}$, that is, for any
polynomial $P$ in non-commuting variables
$$
\psi_{{\bf A}_N}(P) = \psi_{{\bf A}}(P) + \frac{1}{N}\omega_{\bf A}(P) + o(1/N), \ \ \psi_{{\bf B}_N}(P) = \psi_{{\bf B}_N}(P) + \frac{1}{N}\omega_{\bf B}(P) + o(1/N),
$$  and the convergence of $\varphi^{v_N}_{{\bf A}_N}$ and  $\varphi^{v_N}_{{\bf B}_N}$, that is,
for any
polynomial $P$ in non-commuting variables
$$
\varphi^{v_N}_{{\bf A}_N}(P) = \varphi_{\bf A}(P) +o(1), \ \ \varphi^{v_N}_{{\bf B}_N}(P) = \varphi_{\bf B}(P) + o(1).
$$We prove the existence, for any
polynomial $P$ in two non-commuting variables, of the following $\frac{1}{N}$-expansion $$
\mathbb{E}\left[\frac{1}{N}\mathrm{Tr}[P({\bf A}_N,U_N{\bf B}_NU_N^{\star})] \right]= \psi_{\bf A} * \psi_{\bf B}(P)  + \frac{1}{N} {\omega_{\bf A}}\prescript{\varphi_{\bf A}\!}{\psi_{\bf A}\!}{\circledast}^{\varphi_{\bf B}}_{\psi_{\bf B}} \omega_{\bf B}(P)+ o(1/N),
$$
where $\prescript{\varphi_{A}\!}{\psi_{A}\!}{\circledast}^{\varphi_{B}}_{\psi_{B}}$ denotes the cyclic c-free product of tracial linear functionals introduced above.
Note that the above expansion holds in expectation, but not almost surely. In fact, we show that it remains a random error of order $\frac{1}{N}$ which is asymptotically Gaussian (see Proposition~\ref{prop:gaussian}).
\item
In \cite{hasebe2010new}, Hasebe introduced an associative three-state independence $\leftthreetimes$ named \emph{indented independence}
defined by the product of three unital linear functionals
$$
(\varphi_1,\psi_1,\theta_1) \leftthreetimes (\varphi_2,\psi_2,\theta_2) := (\varphi_1\prescript{}{\theta_1\!\!}{*}^{}_{\psi_2}\varphi_2, \psi_1\prescript{}{\theta_1\!\!}{*}^{}_{\psi_2} \psi_2, \theta_1\prescript{}{\theta_1\!\!}{*}^{}_{\psi_2}\theta_2).
$$
He also introduced an associative two-state independence $\leftthreetimes$ named \emph{ordered freeness} ($o$-freeness for short)
defined by the product of two unital linear functionals
$$
(\varphi_1,\psi_1) \leftthreetimes (\varphi_2,\psi_2) := (\varphi_1\prescript{}{\psi_1\!\!}{*}^{}_{\varphi_2}\varphi_2, \psi_1\prescript{}{\psi_1\!\!}{*}^{}_{\varphi_2}\psi_2).
$$
Elaborating on the Vortex model described above, we are able to build two other models displaying asymptotically ordered freeness and indented independence (see Sections~\ref{sec:matrixmodelordered}  and ~\ref{sec:matrixmodelindented}). In the last model, we allow both ${\bf A}_N$ and ${\bf B}_N$ to be randomly rotated by unitaries conditionned to leave invariant fixed intersecting two dimensional subspaces.
\end{enumerate}

\usetikzlibrary{decorations.pathmorphing}

\subsection{Notations} We introduce a few items of notations that will be used through the paper.
Let $N\geq 1$ be a natural number. We will use the following notations:
\begin{itemize}
\item $\mathcal{M}_{N}(\mathbb{C})$ is the complex vector space of all complex $N$-dimensional matrices,
\item $\langle-,-\rangle$ is the canonical hermitian scalar product on $\mathbb{C}^{N}$:
$$
\langle (x_1,\ldots,x_N), (y_1,\ldots,y_N)\rangle = \sum_{i=1}^N \bar{x}_i y_i
$$

\item We let $U(N)=\{U\in \mathcal{M}_{N}(\mathbb{C}):U_NU_N^{\star}=I_N\}$ be the group of unitary matrices of size $N\times N$, and we denote by
$$\mathrm{Stab}(v)=\{M\in \mathcal{M}_{N}(\mathbb{C}):Mv=v\}$$ the algebra of matrices of dimensions $N\times N$ leaving invariant $v$. For any integer $k<N$ and orthonormal family $v_1,\ldots,v_k\in \mathbb{C}^N$, the group $U(N)\cap \mathrm{Stab}(v_1)\cap \cdots \cap \mathrm{Stab}(v_k)$ is isomorphic to the group of unitary matrices $U({N-k})$ where $k$ is the dimension of the vector space generated by $v_1,\ldots,v_k$ and is therefore equipped with a Haar measure. When we speak about a uniform unitary matrix leaving $v_1,\cdots,v_k$ invariant, we mean a random matrix of $U(N)\cap \mathrm{Stab}(v_1)\cap \cdots \cap \mathrm{Stab}(v_k)$ whose distribution is the Haar measure.

\item We use $\mathrm{tr}_N$ for the normalized trace on $\Mm$ and Tr$_N$ for the non-normalized trace,
$$
\mathrm{tr}_N(A)=\frac{1}{N}\sum_{i=1}^NA_{i,i},\quad \mathrm{Tr}_N(A)=\sum_{i=1}^NA_{i,i},\quad A = (A_{i,j}:1 \leq i,j \leq N)\in \Mm.
$$

\item We use $\mathbb{C}\langle X_k:k\in K \rangle$ for polynomials with complex coefficients on non-commutative indeterminates $X_k,~k\in K$.
\end{itemize}
\subsection{Organization of this paper}
\begin{itemize}
\item In Section \ref{sec:preliminaries}, we recall the relevant definitions of freeness, conditional freeness and the lesser known indented and ordered independences.

\item In Section \ref{Sec:cfreeness} contains our first original contribution,that is the introduction cyclic-conditional freeness. Besides, we explain how one recovers cyclic-Boolean and cyclic-monotone independences but also infinitesimal freeness from cyclic-conditional freeness thereby building a triptych analogous to Boolean-monotone-free independences.

\item In Section \ref{sec:asymptotic}, we prove asymptotic conditional freeness and asymptotic cyclic conditional freeness for the Vortex model.

\item In Section \ref{sec:amalgamation}, we discuss extension of the results proved in Section \ref{sec:asymptotic} in the amalgamated setting.
\end{itemize}

\section{Preliminaries}
\label{sec:preliminaries}
We recall basic definitions pertaining to freeness and conditional freeness. The reader is directed to the monograph \cite{nica2006lectures} for a broad account on freeness and the articles \cite{bozejko1991independent,bozejko1996convolution} introducing conditional freeness.
\subsection{Freeness and conditional freeness}\label{Def:freeness}

\begin{definition}
\begin{enumerate}
    \item A non-commutative probability space is an unital algebra $\mathcal{A}$ equipped with an unital functional $\psi:\mathcal{A}\to \mathbb{C}$.
    \item A linear functional $\psi:\mathcal{A}\to\mathbb{C}$ will be called \emph{tracial} if $\psi(ab)=\psi(ba)$, $a,b\in\mathcal{A}$.
    \item A random variable is an element of $\mathcal{A}$. Given a set $\mathbf{A}=\{a_k:k\in K\}$ of random variables in $\mathcal{A}$, the distribution of $\mathbf{A}$ is the collection of all numbers
    $$
    \psi(a_{k_1}\cdots a_{k_n}),~k_1,\ldots,k_n \in K.
    $$
    This is nicely encoded in the linear functional $\psi_{\mathbf{A}}:\mathbb{C}\langle X_k:k\in K \rangle \to \mathbb{C}$ given by
    $$\psi_{\mathbf{A}}(P):=\psi(P({\mathbf{A}})).$$
\end{enumerate}
\end{definition}
Let $\mathcal{A}$ be an unital algebra. We let $(\mathcal{A}_i:i\in I)$ be a family of unital sub-algebras of $\mathcal{A}$. We say that a finite sequence of elements $(a_1,\ldots,a_n)$ , $a_j\in \cup_{i\in I}\mathcal{A}_i,~1\leq j \leq n$ is:
\begin{itemize}
\item \emph{alternating} if $a_1\in \mathcal{A}_{i_1},\ldots,a_n\in \mathcal{A}_{i_n}$ are such that $i_1\neq i_2\cdots\neq i_n$;
\item \emph{cyclically alternating} if $a_1\in \mathcal{A}_{i_1},\ldots,a_n\in \mathcal{A}_{i_n}$ are such that $i_1\neq i_2\neq\cdots\neq i_n$ and $i_n\neq i_1$;
\item \emph{centered} with respect to a linear functional $\psi:\mathcal{A}\to \mathbb{C}$ if $\psi(a_1)=\cdots=\psi(a_n)=0$.
\end{itemize}

\begin{definition}[Free independence] With the notations introduced so far, we say that $(\mathcal{A}_i\subset \mathcal{A}:i\in I)$ are \emph{free} with respect to a unital linear functional $\psi:\mathcal{A}\to \mathbb{C}$ if for any sequence $ (a_1,\ldots,a_n)$ of $\cup_{i\in I}\mathcal{A}_i$ which is alternating and centered with respect to $\psi$, we have
$$
\psi(a_1\cdots a_n) = 0
$$
Given a set of random variables $ \{a_1,\ldots,a_k\}$, we say that they are mutually free if the algebras they each generate are free.
\end{definition}
We use $\star$ for the free product of unital algebra with identification of units: $\mathcal{A}_1 \star \mathcal{A}_2$ is the unital algebra whose elements are alternated words on $\mathcal{A}_1$ and $\mathcal{A}_2$ with product given by concatenation followed by reduction; two neighbouring letters in either $\mathcal{A}_1$ or $\mathcal{A}_2$ are replaced by their product.

Given two probability spaces $(\mathcal{A}_1,\psi_1)$ and $(\mathcal{A}_2,\psi_2)$, we denote by $\psi_1*\psi_2$ the unique linear functional on $\mathcal{A}_1\star \mathcal{A}_2$ extending $\psi_1$ and $\psi_2$ and such that $\mathcal{A}_1 \subset \mathcal{A}_1\star\mathcal{A}_2$ and $\mathcal{A}_2 \subset \mathcal{A}_1\star\mathcal{A}_2$ are two free subalgebras.   In particular, two sets of random variables $\mathbf{A},\mathbf{B}$ are free with respect to $\psi$ if and only if	\begin{equation}
\psi_{\mathbf{A},\mathbf{B}}=	\psi_{\mathbf{A}}*\varphi_{\mathbf{B}}.\label{eq:freeness}
	\end{equation}
	Given two sequences of sets of random variables $\mathbf{A}_N,\mathbf{B}_N$, we said that they are \emph{asymptotically free} if \eqref{eq:freeness} holds up to $o(1)$ as $N\to \infty$.
\begin{definition}[c-free independence] With the notations introduced so far, we say that $(\mathcal{A}_i\subset \mathcal{A}:i\in I)$ are
 \emph{conditionally free} (or c-free for short) with respect to a pair of unital linear functionals $\psi,\varphi:\mathcal{A}\to \mathbb{C}$ if for any sequence $ (a_1,\ldots,a_n)$ of  $\cup_{i\in I}\mathcal{A}_i$ which is alternating and centred with respect to $\psi$, we have
$$
\psi(a_1 \cdots a_n) = 0 \textrm{ and } \varphi(a_1\cdots a_n) = \varphi(a_1) \cdots \varphi(a_n).
$$
\end{definition}

Given two unital algebras $\mathcal{A}_1$ and $\mathcal{A}_2$ and two pairs of unital linear functionals $\psi_1,\varphi_1 :\mathcal{A}_1\to \mathbb{C}$ and $\psi_2,\varphi_2 :\mathcal{A}_2\to \mathbb{C}$, we denote by $$(\psi_1,\varphi_1)*(\psi_2,\varphi_2)=(\psi_1*\psi_2,\varphi_1\prescript{}{\psi_1}{*}^{}_{\psi_2}\varphi_2)$$ the unique pair of linear functionals on the free product $\mathcal{A}_1\star\mathcal{A}_2$ extending $(\psi_1,\varphi_1)$ and $(\psi_2,\varphi_2)$ and such that $\mathcal{A}_1 \subset \mathcal{A}_1 \star \mathcal{A}_2$ and $\mathcal{A}_2 \subset \mathcal{A}_1 \star \mathcal{A}_2$ are conditionally free with respect to $(\psi_1*\psi_2,\varphi_1\prescript{}{\psi_1}{*}^{}_{\psi_2}\varphi_2)$. While the free product $\psi_1*\psi_2$ does not depend on $\varphi_1$ and $\varphi_2$, the c-free product $\varphi_1\prescript{}{\psi_1}{*}^{}_{\psi_2}\varphi_2$ depends on the four linear functionals.

Two sets of random variables $\mathbf{A},\mathbf{B}$ are $c$-free with respect to $(\psi,\varphi)$ whenever	\begin{equation}
(\psi_{\mathbf{A},\mathbf{B}},\varphi_{\mathbf{A},\mathbf{B}})=	(\psi_{\mathbf{A}},\varphi_{\mathbf{A}})*(\psi_{\mathbf{B}},\varphi_{\mathbf{B}}).\label{eq:cfreeness}
	\end{equation}
	Given two sequences of sets of random variables $\mathbf{A}_N,\mathbf{B}_N$, we said that they are \emph{asymptotically $c$-free} if \eqref{eq:cfreeness} holds up to $o(1)$ as $N\to \infty$.
\subsection{Indented independence and ordered freeness}
\label{sec:indented}
Hasebe's indented independence was introduced in \cite{hasebe2010new} and unifies many independences: free, monotone, anti-monotone, Boolean, conditionnaly free, conditionnaly monotone and conditionally anti-monotone independences.

Indented independence is cast as an associative product on triple of states: it preserves positivity. We shall however introduce it as operating on triple of linear functionals.


\begin{definition}[Indented independence]\begin{enumerate}
    \item Let $\mathcal{A}_1$ and $\mathcal{A}_2$ be unital algebras. Let $(\varphi_1,\psi_1,\theta_1)$ and $(\varphi_2,\psi_2,\theta_2)$ be two triples of unital linear functionals on $\mathcal{A}_1$, respectively on $\mathcal{A}_2$.
The \emph{indented product} is defined by the following triple of unital linear functionals on $\mathcal{A}_1\star\mathcal{A}_2$
$$
	(\varphi_1,\psi_1,\theta_1)\leftthreetimes (\varphi_2,\psi_2,\theta_2):=(\varphi_1 \prescript{}{\theta_1}{*}^{}_{\psi_2}\varphi_2,\psi_1 \prescript{}{\theta_1}{*}^{}_{\psi_2} \psi_2, \theta_1 \prescript{}{\theta_1}{*}^{}_{\psi_2}\theta_2).$$
	\item Let $\mathcal{A}$ be a unital algebra equipped with a triple 	$(\varphi,\psi,\theta)$ of unital linear functionals. Two families of random variables $\mathbf{A}$ and $\mathbf{B}$ are said to be \emph{indented independent} with respect to $(\varphi,\psi,\theta)$ whenever their distribution is given by the indented product
\begin{equation}(\varphi_{\mathbf{A},\mathbf{B}},\psi_{\mathbf{A},\mathbf{B}},\theta_{\mathbf{A},\mathbf{B}})=	(\varphi_{\mathbf{A}},\psi_{\mathbf{A}},\theta_{\mathbf{A}})\leftthreetimes (\varphi_{\mathbf{B}},\psi_{\mathbf{B}},\theta_{\mathbf{B}}).\label{eq:indentedind}\end{equation}
		\item Given two sequences of sets of random variables $\mathbf{A}_N,\mathbf{B}_N$, we said that they are \emph{asymptotically indented independent} if \eqref{eq:indentedind} holds up to $o(1)$ as $N\to \infty$.
\end{enumerate}
\end{definition}
The particular case of $(\varphi_1,\psi_1,\psi_1)\leftthreetimes (\varphi_2,\varphi_2,\psi_2)$ yields an associative product on pairs of unital linear functionals which can be given below as a separate definition, following \cite{hasebe2010new}.
\begin{definition}[Ordered freeness]
\begin{enumerate}
    \item Let $\mathcal{A}$ be an unital algebra and $\mathcal{A}_1,\mathcal{A}_1 \subset$ be unital subalgebras of $\mathcal{A}$ endowed with two linear functionals $(\varphi,\psi)$. We say that $\mathcal{A}_1$ and $\mathcal{A}_2$ are $o$-free independent if for any alternated word $x_1\cdots x_n$, $x_i \in \mathcal{A}_{i_j}$ such that 
    $$
    \varphi(x_{i_j})=0 \textrm{ if } i_j=2,\quad \psi(x_{i_j})=0 \textrm{ if } i_j=1
    $$
    one has $\varphi(x_1\cdots x_n) = \psi(x_1\cdots x_n) = 0$.
    \item Let $\mathcal{A}_1$ and $\mathcal{A}_2$ be unital algebras. Let $(\varphi_1,\psi_1)$ and $(\varphi_2,\psi_2)$ be two pairs of unital linear functionals on $\mathcal{A}_1$, respectively on $\mathcal{A}_2$.
The \emph{ordered product} is defined by the following pair of unital linear functionals on $\mathcal{A}_1\star\mathcal{A}_2$
$$
	 (\varphi_1,\psi_1) \leftthreetimes (\varphi_2,\psi_2) := (\varphi_1\prescript{}{\psi_1}{*}^{}_{\varphi_2}{\varphi_2}, \psi_1\prescript{}{\psi_1}{*}^{}_{\varphi_2}\psi_2).$$
	\item Let $\mathcal{A}$ be a unital algebra equipped with a pair 	$(\varphi,\psi)$ of unital linear functionals. Two families of random variables $\mathbf{A}$ and $\mathbf{B}$ are said to be \emph{ordered free} (or o-free for short) with respect to $(\varphi,\psi)$ whenever their distribution is given by the ordered product
	\begin{equation}
(\varphi_{\mathbf{A},\mathbf{B}},\psi_{\mathbf{A},\mathbf{B}})=	(\varphi_{\mathbf{A}},\psi_{\mathbf{A}})\leftthreetimes (\varphi_{\mathbf{B}},\psi_{\mathbf{B}}).\label{eq:ofreeness}
	\end{equation}
	\item Given two sequences of sets of random variables $\mathbf{A}_N,\mathbf{B}_N$, we said that they are \emph{asymptotically o-free} if \eqref{eq:ofreeness} holds up to $o(1)$ as $N\to \infty$.
\end{enumerate}
\end{definition}
\subsection{Freeness up to order $O(N^{-2})$ of random matrices}

Let ${\bf A}_N = \{A_N^k: k \in K\}$ be a  family of $N\times N$ matrices. The non-commutative distribution of ${\bf A}_N$ is the linear functional $\psi_{{\bf A}_N}:\mathbb{C}\langle X_k:k\in K\rangle \to \mathbb{C}$ given by
$$\psi_{{\bf A}_N}(P)=\trn(P({\bf A}_N)).$$
Asymptotic freeness of Voiculescu for unitarily invariant matrices is known to hold up to order $O(N^{-2})$. We refer to \cite{cebron2022freeness} for  some bibliographic notes about the following theorem and two different proofs.
\begin{theorem}\label{th:asymptotic_freeness}
    Let $R>0$ and, for each $N\geq 1$, let $U_N$ be a Haar distributed unitary random matrix. Then, for any $P\in \mathbb{C}\langle X_k,Y_k:k\in K\rangle$, we have
    $$
    \mathbb{E}\left[\trn(P({\bf A}_N,U_N{\bf B}_NU_N^{\star}))\right] = \psi_{{\bf A}_N} * \psi_{{\bf B}_N}(P)+O(N^{-2})
    $$
    uniformly for the the choice of any sequences ${\bf A}_N = \{A_N^k: k \in K\}$ and ${\bf B}_N = \{B^k_N:~k \in K\}$ of $N\times N$ matrices ($N\geq 1$) bounded in operator norm by $R$.
\end{theorem}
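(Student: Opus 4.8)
By linearity it suffices to prove the estimate for a single monomial. Using the identity $U_N(B^{k_1}_N\cdots B^{k_m}_N)U_N^\star=(U_NB^{k_1}_NU_N^\star)\cdots(U_NB^{k_m}_NU_N^\star)$, every monomial in $\mathbf A_N$ and $U_N\mathbf B_NU_N^\star$ can be rewritten as
\[
a_1\,U_Nb_1U_N^\star\,a_2\,U_Nb_2U_N^\star\cdots a_p\,U_Nb_pU_N^\star,
\]
where each $a_i$ is a word in $\mathbf A_N$ (possibly $I_N$) and each $b_i$ a word in $\mathbf B_N$. So we are reduced to evaluating $\mathbb E[\trn(a_1U_Nb_1U_N^\star\cdots a_pU_Nb_pU_N^\star)]$, and writing this trace in matrix coordinates exhibits exactly $p$ factors $U_N$ and $p$ factors $\overline{U_N}$.

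I would then substitute the Weingarten integration formula
\[
\mathbb E\big[U_{i_1j_1}\cdots U_{i_pj_p}\,\overline{U_{i'_1j'_1}}\cdots\overline{U_{i'_pj'_p}}\big]=\sum_{\sigma,\tau\in S_p}\mathrm{Wg}(N,\tau\sigma^{-1})\prod_{k=1}^p\delta_{i_ki'_{\sigma(k)}}\,\delta_{j_kj'_{\tau(k)}}
\]
into that coordinate expression. The Kronecker deltas identify the summation indices according to $(\sigma,\tau)$; resumming the remaining free indices produces, for each pair $(\sigma,\tau)$, a factor $N^{e(\sigma,\tau)}\,\mathrm{Wg}(N,\tau\sigma^{-1})$ (where $e(\sigma,\tau)$ counts index loops, shifted by the normalizations) times a product of normalized traces of words in $\mathbf A_N$ built from the cycles of $\sigma$ and of words in $\mathbf B_N$ built from the cycles of $\tau$. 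Since $\|A^k_N\|,\|B^k_N\|\le R$, each such product of traces is bounded by $R^{\deg P}$ uniformly in $N$ and in the ensembles, and the number of terms depends only on $p$ and $\deg P$; hence uniformity will follow automatically once the powers of $N$ are controlled.

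The heart of the matter is the $N$-power bookkeeping. Using the known asymptotics $\mathrm{Wg}(N,\pi)=N^{-2p+\#\mathrm{cycles}(\pi)}\big(\mathrm{Moeb}(\pi)+O(N^{-2})\big)$, the exponent of $N$ attached to $(\sigma,\tau)$ is an Euler-characteristic quantity associated to the ribbon structure determined by $(\sigma,\tau)$ and the cyclic word; it equals $2-2g$ for an integer $g\ge 0$ after the $1/N$-normalization is accounted for, so it is $\le 0$, with equality exactly for the non-crossing (planar, $g=0$) configurations — which are precisely those realizing the alternating-centered cancellation pattern of the free product, so that summing them recovers $\psi_{\mathbf A_N}*\psi_{\mathbf B_N}(P)$. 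The point I expect to cost the most care is exactly this parity statement: the defect is always \emph{even}, each non-planar $(\sigma,\tau)$ losing at least $N^{-2}$, and the relative $O(N^{-2})$ error in the Weingarten asymptotics enters only at the same order; this is the usual "genus jumps by $2$" phenomenon and relies on the orientability forced by integrating over the \emph{unitary} (rather than orthogonal) group, which is why no $N^{-1}$ term survives.

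Assembling everything: the $N^0$ part is $\psi_{\mathbf A_N}*\psi_{\mathbf B_N}(P)$, while every other term carries an explicit negative even power of $N$ with coefficient bounded by a constant times $R^{\deg P}$ and a combinatorial factor independent of $N$; summing the finitely many terms gives $\psi_{\mathbf A_N}*\psi_{\mathbf B_N}(P)+O(N^{-2})$, uniformly in the ensembles bounded by $R$. An alternative I would keep in reserve is the Schwinger--Dyson / loop-equation route: integration by parts on $U(N)$ expresses $\mathbb E[\trn(\text{word})]$ through lower-degree mixed moments plus a covariance term $\mathbb E[(\trn\otimes\trn)(\cdots)]-\mathbb E[\trn(\cdots)]\mathbb E[\trn(\cdots)]$ that is manifestly $O(N^{-2})$; iterating this recursion yields the $1/N^2$-expansion directly and bypasses the Weingarten combinatorics, at the cost of setting up the loop equations for the two ensembles.
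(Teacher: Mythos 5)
Your proposal is correct and follows essentially the route the paper itself relies on: the paper does not prove Theorem~\ref{th:asymptotic_freeness} but refers to the Weingarten-calculus arguments of \cite{collins2003moments} (via \cite{curran2011asymptotic} and \cite{cebron2022freeness}), which is exactly your genus-expansion computation, including the identification of the planar terms with the finite-$N$ free product via the moment--cumulant formula. The parity claim you flag as the delicate point --- that the exponent defect is always even --- is precisely the fact the paper itself invokes later (geodesic deviations $d(\mathrm{id},\tau)+d(\tau,\sigma)-d(\mathrm{id},\sigma)\in 2\mathbb{N}$ in the symmetric group), so your argument is complete once that standard lemma is cited.
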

The previous result can also be seen as a consequence of the following proposition, which relies on~\cite{collins2003moments}.
\begin{proposition}[Proposition 5.11 \cite{curran2011asymptotic}]
    Let $R>0$ and, for each $N\geq 1$, let $U_N$ be a Haar distributed unitary random matrix. Then, for any $P\in \mathbb{C}\langle X,X^{-1},Y_k:k\in K\rangle$, we have
    $$
    \mathbb{E}\left[\trn(P(U_N,{\bf M}_N))\right] = \mathbb{E}[\psi_{U_N}]* \psi_{{\bf M}_N}(P)+O(N^{-2})
    $$
    uniformly for the choice of any sequences ${\bf M}_N = \{M^k_N:~k \in K\}$ of $N\times N$ matrices ($N\geq 1$) bounded in operator norm by $R$.
\end{proposition}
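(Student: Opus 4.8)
The plan is to evaluate the left-hand side exactly by the Weingarten calculus for the unitary group, following Collins \cite{collins2003moments}, and then truncate the resulting $1/N$-expansion. By linearity in $P$ it suffices to treat a single monomial $w$, which we write as an alternating product of powers of $X,X^{-1}$ and of the letters $Y_k$. Expanding $\trn\big(w(U_N,{\bf M}_N)\big)$ into matrix entries, the variable $U_N$ and its adjoint each occur some number $p\geq 0$ of times (otherwise the expectation vanishes), and Collins' integration formula gives
$$
\mathbb{E}\big[U_{i_1 j_1}\cdots U_{i_p j_p}\,\overline{U_{i'_1 j'_1}}\cdots\overline{U_{i'_p j'_p}}\big]=\sum_{\sigma,\tau\in S_p}\Big(\prod_{a=1}^{p}\delta_{i_a i'_{\sigma(a)}}\,\delta_{j_a j'_{\tau(a)}}\Big)\,\mathrm{Wg}(N,\tau\sigma^{-1}).
$$
Substituting this and summing every remaining index against the entries of the $M_N^k$'s, one obtains a finite sum, indexed by pairs $(\sigma,\tau)\in S_p\times S_p$, of terms $\mathrm{Wg}(N,\tau\sigma^{-1})\,N^{\,c(\sigma,\tau,w)}\prod_b\Trn(\text{word in the }M_N^k\text{'s})$, where $c(\sigma,\tau,w)$ counts the free index-loops created by the contraction and the product ranges over the cycles of an explicit permutation assembled from $\sigma$, $\tau$ and the combinatorial data of $w$. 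The number of such terms depends only on $\deg w$.

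Now insert the asymptotics of the Weingarten function \cite{collins2003moments}: for $\pi\in S_p$,
$$
\mathrm{Wg}(N,\pi)=N^{-p-|\pi|}\big(\mathrm{Moeb}(\pi)+O(N^{-2})\big),
$$
where $|\pi|$ is the minimal number of transpositions in a factorization of $\pi$ and $\mathrm{Moeb}$ is multiplicative over the cycles of $\pi$; the decisive feature is that the first correction is $O(N^{-2})$, there being no $N^{-1}$ term. Collecting all powers of $N$ attached to a given pair $(\sigma,\tau)$, the total exponent is $\leq 0$, vanishes exactly on the \emph{planar} pairs (those realizing equality in the relevant triangle inequality between $|\sigma|$, $|\tau|$ and the datum coming from $w$), and is $\leq -2$ on every other pair --- this is the orientability/parity phenomenon proper to the complex unitary group, and together with the matching parity of the Weingarten correction it is precisely what forbids an $O(N^{-1})$ contribution. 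Since each $\Trn(\cdots)$ is bounded by $N\,(1+R)^{\deg w}$ and the number of terms is independent of $N$, we get, uniformly over all ${\bf M}_N$ with $\norm{M_N^k}\leq R$ (implied constant depending only on $R$ and $\deg w$),
$$
\mathbb{E}\big[\trn\big(w(U_N,{\bf M}_N)\big)\big]=S_{\mathrm{pl}}(w)+O(N^{-2}),
$$
where $S_{\mathrm{pl}}(w)$ collects the genus-$0$ terms above.

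It remains to identify $S_{\mathrm{pl}}(w)$ with $\big(\mathbb{E}[\psi_{U_N}]*\psi_{{\bf M}_N}\big)(w)$. This is the routine part: the non-crossing data labelling the planar pairs $(\sigma,\tau)$ are exactly the pairs (a non-crossing partition on the $U$-positions of $w$, a compatible non-crossing partition on the $M$-positions) that compute the free product of the Haar-unitary $*$-distribution $\mathbb{E}[\psi_{U_N}]$ (which, being the $*$-distribution of a single Haar unitary, is $N$-independent) with $\psi_{{\bf M}_N}$ through the moment-cumulant formula, so the two sums agree term by term. Linearity in $P$ then yields the proposition. I expect the genuine difficulty to lie in the middle step: showing that the whole correction beyond the free product is $O(N^{-2})$ and not merely $O(N^{-1})$, i.e. assembling the two parity facts (no $N^{-1}$ term in the Weingarten expansion; even Euler-characteristic defect for the index contractions) into a single clean bound that is moreover uniform in the $M_N$'s. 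The identification of the leading term is conceptually standard given first-order asymptotic freeness, but the accompanying bookkeeping still has to be carried out with care.
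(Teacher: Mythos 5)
Your argument is correct and is essentially the proof of record: the paper does not prove this proposition itself but quotes it from \cite{curran2011asymptotic}, whose proof (relying on \cite{collins2003moments}) is exactly the Weingarten-calculus computation you describe — Collins' integration formula, the expansion $\mathrm{Wg}(N,\pi)=N^{-p-|\pi|}(\mathrm{Moeb}(\pi)+O(N^{-2}))$ with no $N^{-1}$ term, and the even parity of the geodesic defect $|\sigma|+|\tau\sigma^{-1}|-|\tau|$ forcing all non-planar contributions down to $O(N^{-2})$, with uniformity coming from the finite, $N$-independent number of terms and the operator-norm bound on the traces. You have correctly isolated the two parity facts that make the error $O(N^{-2})$ rather than $O(N^{-1})$; the remaining bookkeeping is standard.
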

In particular, given $n_1,\ldots,n_\ell\in \mathbb{Z}\setminus \{0\}$, we have
$$\mathbb{E}\left[\trn\left(M_N(1)U_N^{n_1}M_N(2)U_N^{n_2}\cdots M_N(\ell)U_N^{n_\ell}\right)\right]=O(N^{-2})$$
for the choice of any sequences of $\ell$-tuples $(M_N(1),\ldots,M_N(\ell))$ of $N\times N$ matrices $(N\geq 1$) bounded in operator norm by $R$ and centred with respect to $\trn$. It is possible to relax the centering of $M_N(1),\ldots,M_N(\ell)$ as follows.

\begin{proposition}\label{prop:freenesswithunitaries}
Let $R>0$, $n_1,\ldots,n_\ell\in \mathbb{Z}\setminus \{0\}$ and, for each $N\geq 1$, let $U_N$ be a Haar distributed unitary random matrix. Then
$$\mathbb{E}\left[\trn\left(M_N(1)U_N^{n_1}M_N(2)U_N^{n_2}\cdots M_N(\ell)U_N^{n_\ell}\right)\right]=O(N^{-2})$$
uniformly for the choice of any sequences of $\ell$-tuples  $(M_N(1),\ldots,M_N(\ell))$ of $N\times N$ matrices ($N\geq 1$) bounded in operator norm by $R$ and such that, for all $1\leq k\leq \ell$ and $1\leq N$,
$$\trn(M_N(k))\leq R/N.$$
\end{proposition}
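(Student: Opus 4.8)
The plan is to first strip the unitaries off using the asymptotic freeness estimate already at our disposal, and then to exhibit an exact cancellation in the resulting free‑product moment. Write $W_N:=M_N(1)\,U_N^{n_1}M_N(2)\,U_N^{n_2}\cdots M_N(\ell)\,U_N^{n_\ell}$ and apply Proposition 5.11 of \cite{curran2011asymptotic} recalled above to the monomial $P=Y_1X^{n_1}Y_2X^{n_2}\cdots Y_\ell X^{n_\ell}$ and to $\mathbf{M}_N=\{M_N(1),\dots,M_N(\ell)\}$; this yields, uniformly over $\|M_N(k)\|\le R$,
$$\mathbb{E}\bigl[\trn(W_N)\bigr]=\bigl(\mathbb{E}[\psi_{U_N}]*\psi_{\mathbf{M}_N}\bigr)(P)+O(N^{-2}).$$
Since Haar measure on $U(N)$ is invariant under $U_N\mapsto e^{i\theta}U_N$, one has $\mathbb{E}[\trn(U_N^{k})]=\delta_{k,0}$ for every $N$, so $\mathbb{E}[\psi_{U_N}]$ is exactly the $*$‑distribution of a Haar unitary. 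It therefore suffices to prove that, realizing this free product in a tracial $C^{*}$‑probability space $(\mathcal{A},\varphi)$ in which a Haar unitary $u$ is free from a copy $m_1,\dots,m_\ell$ of $M_N(1),\dots,M_N(\ell)$ carrying the trace $\trn$,
$$\varphi\bigl(m_1u^{n_1}m_2u^{n_2}\cdots m_\ell u^{n_\ell}\bigr)=O(N^{-2})$$
uniformly over $\|m_i\|\le R$ with $|\varphi(m_i)|=|\trn(M_N(i))|\le R/N$.

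For this I would expand $m_i=\mathring m_i+c_i1$ with $c_i:=\varphi(m_i)$, so $|c_i|\le R/N$ and $\|\mathring m_i\|\le 2R$, obtaining by multilinearity
$$\varphi\bigl(m_1u^{n_1}\cdots m_\ell u^{n_\ell}\bigr)=\sum_{S\subseteq\{1,\dots,\ell\}}\Bigl(\prod_{i\in S}c_i\Bigr)\varphi(w_S),$$
where $w_S$ is the word with $m_i$ replaced by $\mathring m_i$ for $i\notin S$ and by $1$ for $i\in S$. When $|S|\ge 2$ one has $\bigl|\prod_{i\in S}c_i\bigr|\le (R/N)^2$ while $|\varphi(w_S)|\le\|w_S\|\le (2R)^\ell$, so all these terms together are $O(N^{-2})$. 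Everything thus reduces to showing $\varphi(w_S)=0$ whenever $|S|\le 1$. For $S=\emptyset$ the word $\mathring m_1u^{n_1}\cdots \mathring m_\ell u^{n_\ell}$ is alternating with all letters centred ($\varphi(\mathring m_i)=0$, and $\varphi(u^{n_i})=0$ since $n_i\ne 0$), so freeness gives $0$. For $S=\{j\}$, deleting $\mathring m_j$ makes $u^{n_{j-1}}$ and $u^{n_j}$ adjacent (cyclically): if $n_{j-1}+n_j\ne 0$ the word is still alternating and centred, hence $\varphi(w_{\{j\}})=0$; if $n_{j-1}+n_j=0$ the factor $u^{0}=1$ disappears and $\mathring m_{j-1},\mathring m_{j+1}$ merge into one—generally non‑centred—letter $\mathring m_{j-1}\mathring m_{j+1}$, while every other $\mathcal{M}$‑letter stays centred and every surviving $u$‑power is one of the original $n_i\ne 0$.

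The technical core is therefore the following lemma, proved by induction on $r$: if $a_1,\dots,a_r$ lie in the subalgebra $\mathcal{M}$ generated by the $m_i$'s with $\varphi(a_2)=\cdots=\varphi(a_r)=0$ (and no condition on $a_1$) and $k_1,\dots,k_r\in\mathbb{Z}\setminus\{0\}$, then $\varphi(a_1u^{k_1}a_2u^{k_2}\cdots a_ru^{k_r})=0$. Indeed, writing $a_1=(a_1-\varphi(a_1)1)+\varphi(a_1)1$, the first summand gives an alternating centred word and contributes $0$; for the second, traciality of $\varphi$ gives $\varphi(u^{k_1}a_2u^{k_2}\cdots a_ru^{k_r})=\varphi(a_2u^{k_2}\cdots a_ru^{k_r+k_1})$, again an alternating centred word—hence $0$—when $k_1+k_r\ne 0$, and equal to $\varphi\bigl((a_ra_2)u^{k_2}a_3u^{k_3}\cdots a_{r-1}u^{k_{r-1}}\bigr)$ when $k_1+k_r=0$, to which the induction hypothesis (at rank $r-2$) applies; the small cases $r\le 2$ are checked by hand. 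Applying this lemma to $w_S$ for $|S|\le 1$ (taking $a_1$ centred when $S=\emptyset$ or when $n_{j-1}+n_j\ne 0$, and $a_1=\mathring m_{j-1}\mathring m_{j+1}$ after a cyclic rotation otherwise) gives $\varphi(w_S)=0$, hence $\varphi(m_1u^{n_1}\cdots m_\ell u^{n_\ell})=O(N^{-2})$ and the claim follows.

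The main obstacle I anticipate is the bookkeeping behind the reduction to the lemma—namely, checking that for $|S|\le 1$ at most one $\mathcal{M}$‑letter is ever created that is not centred and that all surviving exponents remain nonzero—together with a few degenerate small‑$\ell$ reductions (a word collapsing to $\varphi(\mathring m_i)$, or to $\varphi(u^{n_1+\cdots+n_\ell})$ when all matrices have been absorbed, in which case the total exponent is one of the nonzero $n_i$), all of which are visibly zero. One could alternatively bypass Proposition 5.11 of \cite{curran2011asymptotic} and expand $\mathbb{E}[\trn(W_N)]$ directly, disposing of $S=\emptyset$ and of the case $n_{j-1}+n_j\ne 0$ by the already‑established centred estimate; but the remaining sub‑case $n_{j-1}+n_j=0$ still forces exactly the same inductive unwinding, so this variant offers no real simplification.
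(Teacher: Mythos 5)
Your proposal is correct, and its first half coincides exactly with the paper's proof: both start by setting $P=Y_1X^{n_1}\cdots Y_\ell X^{n_\ell}$ and invoking the Curran--Speicher estimate to replace $\mathbb{E}[\trn(\cdot)]$ by the free-product moment $\mathbb{E}[\psi_{U_N}]*\psi_{{\bf M}_N}(P)$ up to $O(N^{-2})$. Where you diverge is in how that free-product moment is shown to be $O(N^{-2})$. The paper quotes Lemma~\ref{lem:almostcentered}, a general statement about cyclically alternating words of $O(t)$-centred elements, proved via the moment--cumulant formula and a count of singletons in $\pi\sqcup{\sf K}(\pi)$; you instead expand $m_i=\mathring m_i+c_i1$, observe that terms with at least two scalar factors are already $O(N^{-2})$, and kill the terms with at most one scalar factor \emph{exactly}, by an elementary induction that uses only the freeness axiom, traciality, and the fact that $\varphi(u^k)=0$ for all $k\neq0$ (which you correctly justify from rotation invariance of Haar measure, so that the adjacent-exponent sums $n_{j-1}+n_j$ and $k_1+k_r$ arising in the induction are also handled). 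Your route is more special --- it leans on the exact vanishing of Haar-unitary moments rather than mere $O(t)$ smallness, so it would not reprove Lemma~\ref{lem:almostcentered} in its stated generality --- but for this proposition it is fully adequate, entirely self-contained (no free cumulants or Kreweras complements), and in fact yields the slightly stronger conclusion that the limiting free-product moment itself is $O(N^{-2})$ with the only error of indeterminate origin coming from the Curran--Speicher step. The bookkeeping you flag (cyclic adjacency when $j=1$, degenerate collapses to $\varphi(\mathring m_i)$ or $\varphi(u^{n_1+\cdots+n_\ell})$) is handled correctly as written.
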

\begin{proof}Setting $P=Y_1X^{n_1}Y_2X^{n_1}\cdots Y_\ell X^{n_\ell}$, we have
\begin{align*}
    \mathbb{E}\left[\trn\left(M_N(1)U_N^{n_1}M_N(2)U_N^{n_2}\cdots M_N(\ell)U_N^{n_\ell}\right)\right]&=\mathbb{E}\left[\trn(P(U_N,M_N)\right]\\
    &=\mathbb{E}[\psi_{U_N}]* \psi_{M_N}(P)+O(N^{-2})\\
    &=O(N^{-2}),
\end{align*}
where we used Lemma~\ref{lem:almostcentered} in the last line, because $P$ is alternated but not centered with respect to $\mathbb{E}[\psi_{U_N}]* \psi_{M_N}$.\end{proof}
Though stated for a single Haar unitary matrix $U_N$, Proposition \ref{prop:freenesswithunitaries} applies in much more general situations:

\begin{enumerate}
    \item First, if for each $N \geq 1$, one is given a family of independent Haar unitary matrices $(U_N(s))_{s \in S}$ then, under the same hypothesis of Proposition \ref{prop:freenesswithunitaries} and for any $s_1,\ldots,s_\ell \in S$:
    \begin{equation*}
        \mathbb{E}[\trn(M_N(1)U_N^{n_1}(\ell_1)M_N(2)U_N^{n_2}(s_2)\cdots M_N(\ell)U_N^{s_\ell}] = O(N^{-2}),
    \end{equation*}
    which extends the validity of \cite[Lemma 4.3.2]{hiai2000semicircle} to non-centred matrices.
    \item Secondly, the ensemble of Haar independent unitary matrices can be replaced by an ensemble of quantum free unitary matrices $(\mathcal{U}_N(s))_{s \in S}$; where each $\mathcal{U}_N(s)$ is Haar distributed in the free unitary group $U_N^+$ (see \cite{curran2011asymptotic} for definitions).
\end{enumerate}

\begin{lemma}\label{lem:almostcentered}Let $T\subset \mathbb{R}$ be an index set such that $0$ is an accumulation point of $T$.
Let $\mathcal A$ be a unital complex algebra endowed with a family of linear functionals $\psi^t:\mathcal{A}\to \mathbb{C}$ ($t\in T$) which is pointwise bounded. Let $\mathcal{A}_1$ and $\mathcal{A}_2$ be two algebras freely independent with respect to $\psi^t$ for any $t\in T$. For any \emph{cyclically alternating} sequence $a_1,b_1,\ldots,a_n,b_n$ in $\mathcal{A}_1\cup \mathcal{A}_2$ such that
\begin{equation*}
\label{eqn:almostcentered}
\psi^t(a_i)=O(t)\ \text{ and }\ \psi^t(b_i) = O(t)
\end{equation*}
then, one has
$$
\psi^t(a_1b_1\cdots a_nb_n) = O(t^2).
$$
\end{lemma}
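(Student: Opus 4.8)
The statement is a "freeness up to order $O(t^2)$" lemma: for cyclically alternating sequences whose entries are $O(t)$-centered (rather than exactly centered), the functional of the product is $O(t^2)$ rather than merely $O(t)$. I would prove this by decomposing each $a_i$ and $b_i$ into its $\psi^t$-centered part plus its (small) expectation, then expand the product multilinearly and track orders.

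\begin{proof}[Proof sketch]
Write $\mathring{a}_i := a_i - \psi^t(a_i)\,1$ and $\mathring{b}_i := b_i - \psi^t(b_i)\,1$, so that $\psi^t(\mathring a_i)=\psi^t(\mathring b_i)=0$, while $\psi^t(a_i), \psi^t(b_i) = O(t)$ by hypothesis; also all the quantities $\psi^t(a_i), \psi^t(b_i)$ and $\psi^t$ evaluated on subwords are bounded uniformly in $t$ by pointwise boundedness. Substitute $a_i = \mathring a_i + \psi^t(a_i)1$ and $b_i = \mathring b_i + \psi^t(b_i)1$ into the product $a_1 b_1 \cdots a_n b_n$ and expand into $2^{2n}$ terms indexed by the subset $S \subseteq \{a_1,b_1,\ldots,a_n,b_n\}$ of factors replaced by their scalar expectations. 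Each such term is $\big(\prod_{x \in S}\psi^t(x)\big)$ times $\psi^t$ applied to the word obtained by deleting the factors in $S$ and keeping the "circled" factors in order. The scalar prefactor is $O(t^{|S|})$.

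Now the key point is a case analysis on the residual word $w_S$ of circled factors. If $|S|\ge 2$, the prefactor is already $O(t^2)$ and $\psi^t(w_S)$ is bounded, so the term is $O(t^2)$. If $|S|=1$, the prefactor is $O(t)$, and one must show $\psi^t(w_S) = O(t)$: deleting a single factor from a cyclically alternating sequence leaves a word that is still alternating but may fail to be cyclically alternating only at the "seam" where the deletion occurred, yet all remaining circled factors are genuinely $\psi^t$-centered; if the residual word is still alternating and centered, freeness gives $\psi^t(w_S)=0$, and if the deletion merges two neighbours in the same subalgebra we collapse them (their product lies in that subalgebra) and reduce to a shorter alternating centered word — but that merged element need not be centered, so we re-center it, peeling off another factor of its expectation which is $O(t)$? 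No: more carefully, when $|S|=1$ the merged element $\mathring x \mathring y$ with $x,y$ in the same $\mathcal A_j$ has $\psi^t(\mathring x\mathring y)$ bounded but not necessarily $O(t)$; however after recentering, the correction term $\psi^t(\mathring x \mathring y)\cdot 1$ sits in a strictly shorter alternating centered word (length $\ge 1$ still, since the original had $2n\ge 2$ letters) multiplied by a bounded scalar, and one iterates. The cleanest bookkeeping is: handle $|S|\ge 2$ directly, and for $|S|\le 1$ invoke that a cyclically alternating sequence with one element removed, all entries centered, either vanishes under $\psi^t$ by freeness or reduces — after at most finitely many same-subalgebra collapses — to $\psi^t$ of a shorter cyclically alternating centered word times bounded scalars, hence $0$; the only surviving contributions land in the $|S|\ge 2$ bucket. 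The $|S|=0$ term is $\psi^t(\mathring a_1 \mathring b_1 \cdots \mathring a_n \mathring b_n)$, which is $\psi^t$ of a cyclically alternating centered word, hence $0$ exactly by freeness.

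\textbf{The main obstacle.} The delicate part is the $|S|=1$ analysis: removing one letter from a \emph{cyclically} alternating word can produce two adjacent letters in the same subalgebra at the wrap-around seam, and collapsing them destroys centering, so one cannot directly apply the freeness definition. The plan is to argue by (reverse) induction on word length: re-centering the collapsed element produces a scalar multiple (the scalar being a bounded quantity, \emph{not} necessarily small) of $\psi^t$ on a shorter word which is again cyclically alternating in the remaining circled (genuinely centered) letters; since the total number of original letters was $2n$, finitely many collapses terminate either at the empty product (contributing to a scalar, but then the full original prefactor structure must be re-examined — in fact such a fully-collapsed branch carries $|S|\ge 2$ worth of $\psi^t$-factors once all recenterings are accounted, restoring the $O(t^2)$ bound) or at a nonempty cyclically alternating centered word killed by freeness. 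Making this termination-and-accounting argument airtight — i.e. showing every branch of the expansion either vanishes or accumulates at least two factors of $O(t)$ — is the crux; everything else is routine multilinear expansion and the uniform boundedness hypothesis.
\end{proof}
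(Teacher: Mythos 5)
Your route is genuinely different from the paper's. The paper expands $\psi^t(a_1b_1\cdots a_nb_n)$ via the moment--cumulant formula $\sum_{\pi\in\mathrm{NC}(n)}\kappa^t_\pi(a_1,\ldots)\,\psi^t_{K(\pi)}(b_1,\ldots)$ and observes that $\pi\sqcup K(\pi)$ always contains at least two singletons, each singleton contributing a factor $\kappa^t_1(a_i)=\psi^t(a_i)=O(t)$ or $\psi^t(b_j)=O(t)$, whence $O(t^2)$. You instead recenter each letter and expand multilinearly, which is more elementary (no cumulants, no Kreweras complement) at the price of the $|S|\le 1$ bookkeeping. One small point for the $|S|\ge 2$ terms: the residual word $w_S$ is $t$-dependent (its letters are $a_i-\psi^t(a_i)1$), so pointwise boundedness does not apply to it verbatim; one must re-expand $w_S$ into the finitely many fixed subwords of $a_1b_1\cdots a_nb_n$ with coefficients that are products of the $O(t)\subseteq O(1)$ scalars, which you essentially note.

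The $|S|=1$ case, which you flag as the crux, closes more cleanly than you fear, and your tentative accounting (``a fully-collapsed branch carries $|S|\ge 2$ worth of factors'') is not the right mechanism: every $|S|\le 1$ term vanishes \emph{exactly}. Since there are only two subalgebras, the cyclically alternating word strictly alternates between $\mathcal{A}_1$ and $\mathcal{A}_2$, so deleting one letter leaves a linear word of odd length $2n-1$ whose letters are all genuinely $\psi^t$-centered and which has at most one adjacent same-subalgebra pair. If there is no such pair (deletion at an end), the word is alternating and centered, hence killed by freeness. Otherwise write it as $u\,xy\,v$ with $x,y$ in the same subalgebra and split $xy=\bigl(xy-\psi^t(xy)1\bigr)+\psi^t(xy)1$: the first summand gives an alternating centered word of even length at least $2$, killed by freeness; the second gives a bounded scalar times the word $uv$ of length $2n-3$, again fully centered with at most one defect (none if $u$ or $v$ is empty). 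The length stays odd and drops by $2$, so the iteration never reaches the empty word: it terminates at a single centered letter, or earlier at an alternating centered word, and every branch contributes $0$. With that observation inserted, your argument is complete and in fact shows that the whole $O(t^2)$ error comes from the terms in which at least two letters are replaced by their means.
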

\begin{proof} We appeal to the Kreweras complement of a non-crossing partition and to the free cumulants (see \cite{nica2006lectures} for details). We recall succinctly the definition of the Kreweras complement. Let $\pi$ be a non-crossing partition in $\mathrm{NC}(n)$.
Set $[n]^{\prime} = \{1 < 1^{\prime} < \cdots n < n^{\prime}\}$. Given two partitions $\alpha$ and $\beta$ of $\{1,\ldots,n\}$, respectively $\{1^{\prime},\ldots,n^{\prime}\}$, we denote by $\alpha \sqcup \beta$ the unique partition of $[n]^{\prime}$ which restrict to $\alpha$ on $\{1,\ldots,n\}$ and to $\beta$ on $\{1^{\prime},\ldots,n^{\prime}\}$. The Kreweras partition if the largest partition (for the inverse refinement order) such that $\pi \sqcup {\sf K}(\pi)$ is non-crossing.
Because (see \cite[Theorem 14.4.]{nica2006lectures})
$$
\psi^t(a_1b_1\cdots a_nb_n) = \sum_{\pi \in \mathrm{NC}(p)} \kappa^t_{\pi}(a_1,\cdots)\psi^t_{{\sf K}(\pi)}(b_1,\cdots),
$$
it suffices to notice that $\pi \sqcup K(\pi)$ as at least two singletons to conclude. In fact, let $V_1,\ldots,V_n$ the interval blocks in $\pi$. The number of singletons in $\pi \sqcup {\sf K}(\pi)$ is at least $\sum_{j=1}^n(|V_i|-1) + \mathrm{singletons}(\pi)$.
If the above expression is equal to one, it means that
\begin{enumerate}
    \item singletons($\pi$)$=1$ and there is only one interval block (which is the only singleton). In that case, $\pi$ is irreducible and $\{1\} \in {\sf K}(\pi)$.
    \item $\sum_{j=1}^{n}(|V_i|-1)=1$ and $\pi$ contains no singletons : it contains an unique interval block with size 2. Again, in that case $\pi$ is irreducible and thus $\{1\} \in \sf{K}(\pi)$.
\end{enumerate}
\end{proof}


\subsection{Concentration and fluctuations}Asymptotic freeness of Theorem~\ref{th:asymptotic_freeness} can be turned into an almost sure asymptotic freeness. One way to prove it is to use the following concentration of Haar measure on the unitary group $U(N)$.\label{sec:fluctutations}

\begin{theorem}[Corollary 17 of \cite{meckes2013spectral}]
\label{th:concentration_unitary}Let $f$ be a continuous real-valued function on $U(N)$ which, for some constant $C$ and all $U,V\in U(N)$ satisfies
$$|f(U)-f(V)|\leq C\sqrt{\Tr((U-V)(U-V)^\star)}.$$
Let $U_N$ be  Haar distributed on $U(N)$. Then we have for all $\delta>0$,
$$\mathbb{P}\left[\left|f(U_N)-\mathbb{E}[f(U_N)]\right|\geq \delta\right]\leq 2e^{-\frac{N\delta^2}{12C^2}}.$$
\end{theorem}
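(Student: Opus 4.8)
The hypothesis states exactly that $f$ is $C$-Lipschitz for the chordal distance $d_{\mathrm{HS}}(U,V)=\sqrt{\mathrm{Tr}\big((U-V)(U-V)^\star\big)}$ on $U(N)$, which is dominated by the geodesic distance $d$ of the bi-invariant Riemannian metric induced by the inner product $\langle X,Y\rangle=\mathrm{Re}\,\mathrm{Tr}(X^\star Y)$ on $M_N(\mathbb{C})$ (a chord is no longer than any arc joining its endpoints). Hence $f$ is $C$-Lipschitz for $d$ as well, and the plan is to prove the sub-Gaussian bound $\mathbb{P}(|f-\mathbb{E}f|\ge\delta)\le 2e^{-cN\delta^2/C^2}$ for every $d$-$1$-Lipschitz $f$ (after rescaling by $C$), with $c$ a universal constant, and then to check that one may take $c=1/12$.

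I would obtain this by a two-step argument along the determinant map $\det\colon U(N)\to U(1)$, which pushes Haar measure on $U(N)$ to the uniform measure on $U(1)$ and whose fibres are isometric copies of $SU(N)$ carrying their own Haar measure; concretely, a Haar-distributed $U_N$ has the law of $e^{i\theta/N}V$ with $\theta$ uniform on $[0,2\pi)$ and $V$ Haar on $SU(N)$, independent. Conditionally on $\theta$, the map $V\mapsto f(e^{i\theta/N}V)$ is $C$-Lipschitz on $SU(N)$ for its intrinsic metric; and $SU(N)$ with this metric is Einstein, with $\mathrm{Ric}=\tfrac{N}{2}\,g$ — computed from the Killing form $B(X,Y)=2N\,\mathrm{Tr}(XY)$ of $\mathfrak{su}(N)$, since on the compact form $\langle X,Y\rangle=-\mathrm{Tr}(XY)=\tfrac1{2N}(-B)(X,Y)$ while $\mathrm{Ric}_{-B}=\tfrac14(-B)$ — so the Bakry--Émery criterion yields a logarithmic Sobolev inequality with constant of order $N$, hence, by Herbst's argument (exponential-moment bound, Markov, optimization, symmetrization), a conditional concentration $2e^{-cNs^2/C^2}$ around the conditional mean $m(\theta):=\mathbb{E}_V f(e^{i\theta/N}V)$. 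For the base variable, the key geometric observation is that moving $\det U$ from $e^{i\theta}$ to $e^{i\theta'}$ displaces $U$ by only $|\theta-\theta'|/\sqrt N$ in $(U(N),d)$ (the path $s\mapsto e^{is/N}V$ has speed $N^{-1/2}$), so $\theta\mapsto m(\theta)$ is $(C/\sqrt N)$-Lipschitz on the circle $\mathbb{R}/2\pi\mathbb{Z}$; the elementary logarithmic Sobolev inequality for that circle then gives $\mathbb{P}_\theta(|m(\theta)-\mathbb{E}m|\ge s)\le 2e^{-cNs^2/C^2}$ as well. A union bound $\mathbb{P}(|f-\mathbb{E}f|\ge\delta)\le \mathbb{P}(|f-m|\ge\tfrac\delta2)+\mathbb{P}(|m-\mathbb{E}m|\ge\tfrac\delta2)$ then finishes the estimate.

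The main obstacle is precisely the flat determinant direction: $U(N)$ itself does not have strictly positive Ricci curvature (it degenerates along the central circle $\{e^{i\theta}I_N\}$), so one cannot feed $U(N)$ directly into Bakry--Émery and genuinely needs the disintegration above, together with the verification that this direction is $\sqrt N$-contracting, so that the base contribution is no worse than the fibre one. The remaining work is bookkeeping of constants — the $\tfrac{n-1}{n}$ factor of Bakry--Émery with $n=N^2-1$, the value $\tfrac N2$ of the Ricci bound, the logarithmic Sobolev constant of the unit circle, and the factors $2$ from Herbst, symmetrization and the union bound — which a crude treatment collapses into $2e^{-cN\delta^2/C^2}$ for some universal $c>0$; sharpening this to the stated $c=1/12$ is carried out in \cite{meckes2013spectral}, to which one may simply defer.
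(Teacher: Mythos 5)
The paper offers no proof of this statement: it is imported verbatim as Corollary~17 of \cite{meckes2013spectral}, so there is no internal argument to compare against. Your sketch is, in substance, a faithful reconstruction of the proof in that reference: the disintegration of Haar measure via the coupling $U_N=e^{i\theta/N}V$ with $\theta$ uniform and $V$ Haar on $SU(N)$ is the correct way to circumvent the degeneracy of the Ricci curvature of $U(N)$ along the central circle; the computation $\mathrm{Ric}=\tfrac{N}{2}g$ on $SU(N)$ from the Killing form is right; the fibre maps $V\mapsto f(e^{i\theta/N}V)$ are indeed $C$-Lipschitz and the base map $\theta\mapsto m(\theta)$ is indeed $(C/\sqrt N)$-Lipschitz since $\|(e^{i\theta/N}-e^{i\theta'/N})V\|_{\mathrm{HS}}\le \sqrt N\,|\theta-\theta'|/N$. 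The one genuinely lossy step is the concluding union bound, which produces a prefactor $4$ and an exponent of order $N\delta^2/(48C^2)$ rather than the stated $2e^{-N\delta^2/(12C^2)}$; Meckes--Meckes avoid this by tensorizing the two logarithmic Sobolev inequalities (for the circle and for $SU(N)$) through the Lipschitz coupling map into a single LSI for $U(N)$ with constant $6/N$, and then applying Herbst once. You flag this and defer the sharp constant to the reference, which is legitimate; for the purposes of this paper only the qualitative bound $2e^{-cN\delta^2/C^2}$ is ever used (via Borel--Cantelli), so your weaker constant would suffice everywhere it is invoked.
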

This concentration result can be used together with the Borel–Cantelli lemma to show that the normalized trace has the same behavior as $N\to \infty$ with or without taking the expectation. The difference is a Gaussian error of order $\frac{1}{N}$.

Let us present now the theory of second order free probability which can be used in order to describe this Gaussian error. In the sequel, we will denote by $k_r(X_1,\ldots,X_r),~X_i \in {\bf X}$ the $r^{\text{th}}$ classical cumulants of a family ${\bf X}$ of random variables.  We recall the basic definitions of second-order freeness.
\begin{definition}
Let ${\bf A}_N$ be an ensemble of $N\times N$ random matrices. We say that it has a \emph{second-order limit distribution} if the following limits exist for any polynomials $P_i$ in non-commuting variables:
\begin{align*}
    \psi_{\bf A}(P_1) :=& \lim_{N\to \infty} \mathbb{E}[\trn(P_1({\bf A}_N))],\\
        \psi_{{\bf A}}^{(2)}(P_1,P_2):=& \lim_{N\to \infty} k_2(\mathrm{Tr}_N(P_1({\bf A}_N)),\mathrm{Tr}_N(P_2({\bf A}_N))),\\
        0=& \lim_{N\to \infty}k_r(\mathrm{Tr}_N(P_1({\bf A}_N)),\ldots,\mathrm{Tr}_N(P_r({\bf A}_N))\ \ \text{with}\ \ r > 2.
\end{align*}
\end{definition}
Note that if the ensemble $\bfA_N$ is an ensemble of deterministic matrices, then ${\bfA}_N$ has second-order limit distributions if and only if $\psi_{\bf A _N}$ converges point-wise, since all cumulants of ${\bfA}_N$ of order greater than two vanishes.
\begin{definition}
\begin{enumerate}
    \item A \emph{second order non-commutative probability space} $(\mathcal{A},\psi_1,\psi_2)$ consists of a unital algebra $\mathcal{A}$, a tracial linear functional $\psi:\mathcal{A}\to \mathbb{C}$ and a bilinear functional 
    $$
    \psi^{(2)}:\mathcal{A}\to\mathbb{C}
    $$
    which is tracial in both arguments and which satisfies 
    $$
    \psi_2(a,1) = 0 = \psi_2(1,b),~a,b \in \mathcal{A}
    $$
    \item We say that two subalgebras $\mathcal{A}_1,\mathcal{A}_2 \subset \mathcal{A}$ are \emph{free of second order} with respect to $(\psi,\psi^{(2)})$ whenever $\mathcal{A}_1$ and $\mathcal{A}_2$ are free with respect to $\psi$ and the following condition involving $\psi^{(2)}$ is satisfied. For $n,m \geq 1$ integers and tuples $(a_1,\ldots,a_n)$, $(b_m,\ldots,b_1)$ from $\mathcal{A}$ such that both are cyclically alternating and centered with respect to $\psi$, one has
        $$
        \psi^{(2)}(a_1\cdots a_n,b_n\cdots b_1)= \delta_{m,n}\sum_{k=0}^{n-1} \psi(a_1b_{1+k})\cdots \psi(a_nb_{n+k}).
        $$
where the indices of the $b_i$ are interpreted modulo $n$.
\end{enumerate}
\end{definition}

If the algebra $\mathcal{A}_1$ and $\mathcal{A}_2$ are free of second order, then $\psi^{(2)}$ restricted (on both of its arguments) to the algebra generated by $\mathcal{A}_1$ and $\mathcal{A}_2$ is determined by the restrictions of $\psi$ and $\psi^{(2)}$ to each algebra. We may thus adopt a notation that will be turn useful later. Given two second-order probability spaces $(\psi_{\bfA},0)$ and $(\psi_{\bfB},0)$ (where $0$ is the trivial bilinear functional), we denote by $ (\psi_{\bfA}*\psi_{\bfB})^{(2)} $ the unique bilinear functional on $\mathcal{A}_1\star\mathcal{A}_2$ extending $(\psi_{\bfA},0)$ and $(\psi_{\bfB},0)$  and such that  $\mathcal{A}_1$ and $\mathcal{A}_2 $ are free of second order in the second order probability space $$\left(\mathcal{A}_1\star\mathcal{A}_2, \psi_{\bfA}* \psi_{\bfB},(\psi_{\bfA}*\psi_{\bfB})^{(2)}\right).$$

We recall here two results which are the second-order counterparts of asymptotic freeness of randomly rotated matrices.
\begin{proposition}[Theorem 3.12 of \cite{mingo2007second}]
\label{prop:asymptoticsecondorder}
For each $N\geq 1$, let $\bfA_N$ be an ensemble of deterministic matrices. We suppose $\bfA_N$ has a second order limit distribution. for each $N\geq 1$, let $U_N$ be a Haar distributed unitary random matrix. Then $\bfA_N$ and $U_N$ are asymptotically free of second order.
\end{proposition}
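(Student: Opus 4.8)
The plan is to verify the two defining clauses of second-order freeness for $\bfA_N$ and the unital algebra generated by $U_N,U_N^\star$ — first-order asymptotic freeness, and the annular relation for cyclically alternating centred words — and, in addition, to check that all classical cumulants of order $r\ge 3$ of traces of words mixing $\bfA_N$ and $U_N$ tend to $0$, so that the joint family has a second-order limit distribution at all. First-order asymptotic freeness of a deterministic ensemble and a Haar unitary is classical: it is the case $\bfB_N=I_N$ of Theorem~\ref{th:asymptotic_freeness}, or follows directly from the Weingarten calculus. So the substance lies entirely in the fluctuations, which I would attack with the unitary Weingarten formula.

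First I would reduce, by multilinearity of the classical cumulants $k_r$ and by absorbing maximal runs of letters from $\bfA_N$ into single deterministic matrices, to controlling, for each $r\ge 2$,
$$
k_r\!\left(\Trn\!\big(D^{(1)}_1 U_N^{\epsilon^{(1)}_1}\cdots U_N^{\epsilon^{(1)}_{m_1}}\big),\,\ldots,\,\Trn\!\big(D^{(r)}_1 U_N^{\epsilon^{(r)}_1}\cdots U_N^{\epsilon^{(r)}_{m_r}}\big)\right),
$$
with $\epsilon^{(j)}_k\in\{+1,-1\}$ and $D^{(j)}_k$ polynomials in $\bfA_N$. Writing each trace as a sum over matrix indices and expanding the entries of $U_N$ and $U_N^\star$, the expectation of a product of such traces is, by the Weingarten formula, a sum over pairs $(\alpha,\beta)$ of permutations matching the $U_N$-slots with the $U_N^\star$-slots, weighted by a product of Weingarten functions and by products of entries of the $D^{(j)}_k$ prescribed by $\alpha$, $\beta$ and the cyclic structure of the traces; passing to $k_r$ retains exactly the ``connected'' diagrams, those whose incidence data link all $r$ circles.

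Then I would run the genus expansion. Using $N^{n+|\sigma|}\,\mathrm{Wg}(N,\sigma)=\mu(\sigma)+O(N^{-2})$, one attaches to each diagram a compact oriented surface with $r$ boundary circles whose Euler characteristic $2-2g-r$ records the power of $N$ it contributes after the free index loops are summed; since $\bfA_N$ is deterministic with a second-order limit distribution, the $D^{(j)}_k$-index sums enter only through the converging functional $\psi_{\bfA}$ and vanishing higher corrections, so the power count gives $k_r=O(N^{2-r})$, uniformly. Hence these cumulants vanish in the limit for $r\ge 3$, while for $r=2$ only the genus-zero, two-boundary diagrams — the \emph{annular non-crossing} ones — survive, at order $N^0$.

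It remains to match the $r=2$ limit with the second-order free product of $(\psi_{\bfA},0)$ and $(\psi_U,\psi_U^{(2)})$. For cyclically alternating centred words, the surviving annular non-crossing diagrams are precisely those that pair each $U_N^{\epsilon}$-slot on one circle with a $U_N^{-\epsilon}$-slot on the other ``through the annulus''; summing their Möbius weights against the converging $\bfA_N$-moments should collapse to $\delta_{m,n}\sum_{k=0}^{n-1}\psi(a_1b_{1+k})\cdots\psi(a_nb_{n+k})$, the right-hand side of the defining relation, with the base case being the covariance $k_2(\Trn(U_N^{n}),\Trn(U_N^{m}))\to|n|\,\delta_{n+m,0}$ of a single Haar unitary. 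The hard part is exactly this genus bookkeeping: arguing that the disconnected diagrams cancel inside the cumulant and that the leading annular non-crossing sum telescopes to the cyclic-convolution formula; the complete argument is carried out in \cite{mingo2007second}.
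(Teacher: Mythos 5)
The paper does not prove this proposition: it is imported verbatim as Theorem 3.12 of \cite{mingo2007second}, so there is no internal argument to compare yours against. Your outline (Weingarten expansion, the $O(N^{-2})$ correction to the Weingarten function, the connected-diagram power count $k_r=O(N^{2-r})$, and the surviving annular non-crossing spoke diagrams at $r=2$) is a faithful summary of the proof in that reference, and it is consistent with the only mechanics the paper ever alludes to --- in its later sketch for the reformulation of Theorem 3.15, which invokes the same two ingredients, namely that geodesic defects in the symmetric group are even and the explicit second-order moment formula.
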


\begin{theorem}
[Theorem 3.15 of \cite{mingo2007second}]
\label{thm:asymptoticsecondrderfreeness}
Let ${\bf A}_N = \{A_N^k: k \in K\}$ and ${\bf B}_N = \{B^k_N:~k \in K\}$ be two sequences of deterministic $N\times N$ matrices ($N\geq 1$) such that $\psi_{\bfA_N}$ (resp. $\psi_{\bfB_N}$) converges point-wise to a linear functional $\psi_{\bfA}$ (resp. $\psi_{\bfB}$). For each $N\geq 1$, let $U_N$ be a Haar distributed unitary random matrix. Then
$\bfA_N$ and $U_N\bfB_NU_N^{\star}$
are asymptotically free of second order.

In particular,
for any $P^{(1)},\ldots,P_{(r)} \in \mathbb{C}\langle X_k,Y_k:k\in K \rangle$, each one being a product of cyclically alternated elements which are centered with respect to $\psi_{\bfA}*\psi_{\bfB}$, we have the convergence in moments of
\begin{equation*}
    \Big(\mathrm{Tr}_N(P^{(1)}(\bfA_N,U_N\bfB_NU_N^{\star})),\ldots,\mathrm{Tr}_N(P^{(r)}(\bfA_N,U_N\bfB_NU_N^{\star}))\Big)
\end{equation*}
to a Gaussian vector $(Z_1,\ldots,Z_r)$ with covariance matrix $\gamma$ given by
\begin{equation*}
\gamma(i,j) =\delta_{\ell(i),\ell(j)} \sum_{i=1}^{\ell(i)} \psi_{\bfA}* \psi_{\bfB}(P^{(i)}_1P_{1+k}^{(j)})\cdots\psi_{\bfA}*\psi_{\bfB}(P^{(i)}_rP_{r+k}^{(j)})
\end{equation*}
where $P^{(i)} = P_1^{(i)}\cdots P_{\ell (i)}^{(i)}$ is the decomposition of $P_i$ into cyclically alternated and centered factors.
\end{theorem}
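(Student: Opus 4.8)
The plan is to split the statement into two parts: the asymptotic freeness of second order of $\bfA_N$ and $U_N\bfB_NU_N^{\star}$, and the Gaussian limit law with the stated covariance. The second part then follows formally from the first. Indeed, by definition of the second-order free product the bilinear functional $(\psi_{\bfA}*\psi_{\bfB})^{(2)}$ evaluated on two products of cyclically alternating centered words is exactly the covariance $\gamma$ written in the statement; hence, once we know that the pair $(\bfA_N,U_N\bfB_NU_N^{\star})$ possesses a second-order limit distribution (which, recall, already encodes the vanishing of the classical cumulants $k_r$, $r\geq 3$, of traces of words) and is asymptotically second-order free, nothing is left beyond identifying that limiting covariance with $\gamma$. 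Here one works with the centered traces $\mathrm{Tr}_N(P^{(i)})-\mathbb{E}[\mathrm{Tr}_N(P^{(i)})]$, the first-order centering of the $P^{(i)}$ together with Theorem~\ref{th:asymptotic_freeness} controlling $\mathbb{E}[\mathrm{Tr}_N(P^{(i)})]$.

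For the second-order freeness I would follow the two-step route of \cite{mingo2007second}. \textbf{Step A.} Since $\bfA_N$ and $\bfB_N$ are both deterministic and $\psi_{\bfA_N},\psi_{\bfB_N}$ converge point-wise, the joint deterministic ensemble ${\bf C}_N:=\bfA_N\sqcup\bfB_N$ has a second-order limit distribution, so Proposition~\ref{prop:asymptoticsecondorder} gives that ${\bf C}_N$ and the Haar unitary $U_N$ are asymptotically free of second order; in other words $U_N$ is, asymptotically, a second-order free Haar unitary relative to ${\bf C}_N$, and in particular $({\bf C}_N,U_N)$ has a second-order limit distribution. \textbf{Step B.} It then remains to prove the ``conjugation lemma'': if a deterministic ensemble ${\bf C}_N=\bfA_N\sqcup\bfB_N$ is asymptotically second-order free from a Haar unitary $U_N$, then $\bfA_N$ and $U_N\bfB_NU_N^{\star}$ are asymptotically second-order free with first- and second-order limit distributions $\psi_{\bfA}*\psi_{\bfB}$ and $(\psi_{\bfA}*\psi_{\bfB})^{(2)}$. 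The point is that any word in $\bfA_N$ and $U_N\bfB_NU_N^{\star}$ can be rewritten — splitting each block $U_N B U_N^{\star}$ through $B=\psi_{\bfB_N}(B)+(B-\psi_{\bfB_N}(B))$ up to $o(1)$ — as a combination of words alternating in ${\bf C}_N$, $U_N$ and $U_N^{\star}$, so that both the vanishing of higher cumulants and the covariance of the fluctuations transfer from $({\bf C}_N,U_N)$; carrying out the bookkeeping yields exactly the second-order free-product formulas.

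\textbf{Step B is the technical heart}, and it is carried out by the Weingarten calculus. One expands $k_2\bigl(\mathrm{Tr}_N(W^{(1)}),\mathrm{Tr}_N(W^{(2)})\bigr)$ for two such reduced words into a sum over matrix indices and integrates out $U_N$ using $\mathbb{E}\bigl[\prod_k U_{i_kj_k}\prod_k\bar U_{i'_kj'_k}\bigr]=\sum_{\sigma,\tau\in S_n}\mathrm{Wg}(N,\sigma\tau^{-1})\prod_k\delta_{i_k i'_{\sigma(k)}}\delta_{j_k j'_{\tau(k)}}$; the contribution of a pair $(\sigma,\tau)$ is then $N$ to a genus-type exponent for permutations relative to the two marked trace-cycles, and this exponent is maximized precisely on the \emph{annular non-crossing} pairs joining the two cycles. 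These survive at order $N^{0}$ in $k_2$ and, weighted by the appropriate $\psi_{\bfA}$- and $\psi_{\bfB}$-moments of the two words, assemble into the cyclic-rotation sum $\delta_{m,n}\sum_{k=0}^{n-1}\psi(a_1b_{1+k})\cdots\psi(a_nb_{n+k})$, while the non-optimal pairs, together with the ``planar'' contributions that the centering of the factors annihilates, only contribute $o(1)$. I would import the precise genus inequality and the identification of the optimal pairs with annular non-crossing permutations from \cite{collins2003moments,mingo2007second} rather than re-derive them.

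The step I expect to be the main obstacle is exactly this combinatorial identification: proving the annular genus inequality, showing that the only $(\sigma,\tau)$ surviving in the large-$N$ limit are the annular non-crossing ones, and checking that the first-order centering of the factors removes the otherwise-dominant planar contributions, so that what is left is precisely the cyclic sum over rotations. Everything else — the reduction of the Gaussian law to second-order freeness, the reduction of words in $\bfA_N,U_N\bfB_NU_N^{\star}$ to words in ${\bf C}_N,U_N$, and the control of the first-order means — is routine given the results already quoted.
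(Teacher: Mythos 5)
The paper does not prove this statement---it is imported verbatim as Theorem 3.15 of \cite{mingo2007second}---and your sketch correctly reproduces the route of that reference: reduction of the Gaussian limit law to second-order freeness plus vanishing of higher cumulants, Proposition~\ref{prop:asymptoticsecondorder} for the joint deterministic ensemble together with the Haar unitary, a conjugation step to pass to $\bfA_N$ and $U_N\bfB_NU_N^{\star}$, and the Weingarten/annular-non-crossing genus analysis as the technical core. So your proposal follows essentially the same approach as the paper's source, and the combinatorial steps you propose to import from \cite{collins2003moments,mingo2007second} are precisely the ones the paper itself takes on faith by citation.
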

Denoting by $\psi^{(2)}_{\bfA_N,U_N\bfB_NU_N^{\star}}:\mathbb{C}\langle X_k,Y_k:k\in K \rangle\to\mathbb{C}$ the second-order joint distribution of $\bfA_N$ and $U_N\bfB_NU_N^{\star}$, given by
\begin{equation*}
    \psi_{\bfA_N,U_N\bfB_NU_N^{\star}}^{(2)}(P,Q)=k_2(\textrm{Tr}_N(P(\bfA_N)),\textrm{Tr}_N(Q(U_N\bfB_NU_N^{\star}))=\textrm{Cov}(\textrm{Tr}_N(P(\bfA_N)),\textrm{Tr}_N(Q(U_N\bfB_NU_N^{\star}))),
\end{equation*}
the above theorem can be restated as follows:
\begin{equation*}
    \left(\mathbb{E}\left[\psi_{\bfA_N,U_N\bfB_NU_N^{\star}}\right],\psi^{(2)}_{\bfA_N,U_N\bfB_NU_N^{\star}}\right) = \left(\psi_{\bfA}* \psi_{\bfB},(\psi_{\bfA}*\psi_{\bfB})^{(2)}\right)+ o(1),
\end{equation*}
and, for any $r\geq 3$,
\begin{equation*}
    k_r\Big(\mathrm{Tr}_N(P(\bfA_N,U_N\bfB_NU_N^{\star}),\ldots,\mathrm{Tr}_N(Q(\bfA_N,U_N\bfB_NU_N^{\star}))\Big) =o(1).
\end{equation*}
It is possible to weaken the hypothesis of Theorem \ref{thm:asymptoticsecondrderfreeness} about convergence of the first order distributions of the ensemble $\bfA_N$ and $\bfB_N$ and state asymptotic second-order freeness of randomly rotated ensembles of deterministic matrices akin to \ref{th:asymptotic_freeness}.
The following proposition is a mere reformulation of Theorem 3.15 in \cite{mingo2007second}.

\begin{proposition}Let $R>0$ and, for each $N\geq 1$, let $U_N$ be a Haar distributed unitary random matrix. Then, for $P_1,\ldots,P_r\in \mathbb{C}\langle X_k,Y_k:k\in K\rangle$ (with $r\geq 3$), we have
$$\mathbb{E}\left[\trn(P_1(\bfA_N,U_N\bfB_NU^{\star}_N)))\right]=\psi_{\bfA_N}*\psi_{\bfB_N}(P_1) + O(N^{-2}),$$
$$k_2\Big(\Trn(P_1(\bfA_N,U_N\bfB_NU^{\star}_N)),\Trn(P_2(\bfA_N,U_N\bfB_NU^{\star}_N))\Big) =(\psi_{\bfA_N}*\psi_{\bfB_N})^{(2)}(P_1,P_2)+O(N^{-2}),$$
and, 
    $$
k_r\Big(\Trn(P_1(\bfA_N,U_N\bfB_NU^{\star}_N)), \ldots,\Trn(P_r(\bfA_N,U_N\bfB_NU^{\star}_N))\Big) =o(1)
$$
uniformly for the the choice of any sequences ${\bf A}_N = \{A_N^k: k \in K\}$ and ${\bf B}_N = \{B^k_N:~k \in K\}$ of $N\times N$ matrices ($N\geq 1$) bounded in operator norm by $R$.
\end{proposition}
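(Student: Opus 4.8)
The plan is to obtain the three displayed estimates from the genus (Weingarten) expansion that underlies the proof of \cite[Theorem~3.15]{mingo2007second}, the point being that this expansion is valid \emph{uniformly} over sequences of matrices bounded in operator norm by $R$ and produces error terms of exactly the claimed orders; in this sense the proposition is a restatement of \emph{loc.\ cit.} with the (superfluous) convergence hypothesis removed.

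First, the estimate on $\mathbb{E}[\trn(P_1(\bfA_N,U_N\bfB_NU_N^\star))]$ is nothing but Theorem~\ref{th:asymptotic_freeness} (equivalently Proposition~5.11 of \cite{curran2011asymptotic}, which already carries the uniform $O(N^{-2})$ rate), so there is nothing to add there. For the covariance and the higher cumulants I would first reduce, by multilinearity of classical cumulants and linearity of the non-commutative distributions, to the case where $P_1,\ldots,P_r$ are monomials; grouping consecutive $X$- and $Y$-letters, each $P_i(\bfA_N,U_N\bfB_NU_N^\star)$ then becomes an alternating product of words in $\bfA_N$ and of conjugated words $U_N(\,\cdot\,)U_N^\star$ in $\bfB_N$.

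The core step is to apply Collins' unitary integration formula \cite{collins2003moments} to $\mathbb{E}\bigl[\prod_{i=1}^r\Trn(P_i(\bfA_N,U_N\bfB_NU_N^\star))\bigr]$, writing it as a finite sum --- indexed by pairs of permutations of common size at most $d:=\max_i\deg P_i$ --- of $\mathrm{Wg}_N(\,\cdot\,)$ times a product of (unnormalized) traces of words in $\bfA_N$ and a product of (unnormalized) traces of words in $\bfB_N$. Each such trace of a word of length $\le d$ is bounded by $NR^d$, and the Weingarten function obeys a uniform estimate $\mathrm{Wg}_N(\pi)=N^{-2k+c(\pi)}(m_\pi+O(N^{-2}))$ for $\pi$ of size $k\le d$, where $c(\pi)$ is the number of cycles of $\pi$ and $m_\pi$ a rational constant. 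Multiplying out and collecting powers of $N$, one gets for each quantity of interest an honest expansion in powers of $N^{-2}$ --- only orientable surfaces occur, since $U_N$ is a \emph{complex} unitary --- whose coefficients are polynomial expressions in the numbers $\psi_{\bfA_N}(Q)$ and $\psi_{\bfB_N}(Q)$, hence bounded by a constant depending on $R$ and $d$ only. Reading off the first orders then finishes the argument: the leading term of $\mathbb{E}[\Trn P_1]$ is $N\,\psi_{\bfA_N}*\psi_{\bfB_N}(P_1)$ with next term $O(N^{-1})$; in $k_2(\Trn P_1,\Trn P_2)=\mathbb{E}[\Trn P_1\,\Trn P_2]-\mathbb{E}[\Trn P_1]\,\mathbb{E}[\Trn P_2]$ the disconnected contributions cancel against the product of the one-point expansions, the surviving connected part starting at order $N^0$ with leading coefficient $(\psi_{\bfA_N}*\psi_{\bfB_N})^{(2)}(P_1,P_2)$ and next term $O(N^{-2})$; and for $r\ge 3$ a connected contribution to $k_r$ corresponds to an orientable surface with $r$ boundary components, of Euler characteristic $\le 2-r\le -1$, so $k_r=O(N^{2-r})=o(1)$.

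I expect the main obstacle to be this last identification: checking that Mingo--Speicher's matching of the $N^0$-coefficient of the covariance with the second-order free product $(\psi_{\bfA}*\psi_{\bfB})^{(2)}$ survives \emph{verbatim} when the limiting functionals are replaced by the $N$-dependent ones $\psi_{\bfA_N},\psi_{\bfB_N}$, and that every error estimate in their combinatorial argument (and in the Weingarten asymptotics) is uniform in terms of $R$ and $d$ alone. Both are routine once one observes that their bookkeeping never uses the convergence hypothesis --- it is invoked only to pass to the limit at the very end --- so that hypothesis may simply be dropped.
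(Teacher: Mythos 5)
Your proposal is correct and follows essentially the same route as the paper: the authors also present this proposition as a mere reformulation of Theorem~3.15 of Mingo--Speicher, justified by the Weingarten/genus expansion whose corrections occur only at even orders (which they phrase via the parity of geodesic deviations $d(\mathrm{id},\tau)+d(\tau,\sigma)-d(\mathrm{id},\sigma)\in 2\mathbb{N}$ in the symmetric group, equivalent to your orientable-surface count), together with the observation that the expansion is uniform over matrices bounded in operator norm and never uses the convergence hypothesis. Your write-up is in fact more detailed than the paper's sketch on the uniformity and the $k_r=O(N^{2-r})$ bookkeeping.
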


\begin{proof}[Sketch of proof]We refer to \cite[Theorem 3.15]{mingo2007second} for the details of the proof. It ultimately relies on the fact that geodesic deviations in the symmetric group occur at even orders:
\begin{equation*}
    d(\mathrm{id},\tau) + d(\tau,\sigma) - d(\mathrm{id},\sigma) \in 2\mathbb{N}.
\end{equation*}
and on the following form of Theorem 3.12 in \cite{mingo2007second} (we use the notations introduced by the authors without recalling them here),
\begin{align*}
    &k_2(\mathrm{Tr}_N(A_1U_N^{\varepsilon_1}A_2U_N^{\varepsilon_2} \cdots A_k U_N^{\varepsilon_k},~\mathrm{Tr}_N(A_{k+1}U_N^{\varepsilon_{k+1}}A_{k+2}U_N^{\varepsilon_{k+2}} \cdots A_{k+q}U_N^{\varepsilon_{k+q}})) \\
    &\hspace{1cm}=\sum_{\mathcal{S}^{\varepsilon}(k,q)}\mu(\tilde{\pi}) [\mathrm{tr}_{(c_k \otimes c_q)\circ \pi}(A_1\otimes\cdots \otimes A_{k+q})] \\
    &\hspace{2cm}+\sum_{\pi_1 \in \mathcal{S}^{(\varepsilon_1)}(k),~\pi_2\in\mathcal{S}^{(\varepsilon_2)}(q)}\mu(\tilde{\pi}_1, \tilde{\pi}_2)[\mathrm{tr}_{(c_k\circ\pi_1^{-1}) \otimes (c_q\circ\tilde{\pi}_2^{-1})}(A_1\otimes\cdots A_{k+q})] + O(\frac{1}{N^2})
\end{align*}
where $\mathrm{tr}_{\sigma}(A_1\otimes A_n)$, $\sigma$ a permutation with length $n$ means that the we compute the trace of the product of the matrices $A_i's$ over each cycle of $\sigma$ and $A_1,\ldots,A_{k+q}$ are deterministic matrices.
\end{proof}

\section{Cyclic-conditional freeness}\label{Sec:cfreeness}
\subsection{Definition}
In this section, we introduce a new independence, called cyclic-conditional freeness, between unital algebras equipped with a triple of linear forms among which one is tracial. In the next section we explain how this independence arises naturally when computing infinitesimal distributions of the Vortex model.

\begin{definition}[Cyclic c-freeness] Let $\mathcal{A}$ be a unital complex algebra equipped with three linear functionals $\psi,\varphi,\omega:\mathcal{A}\to \mathbb{C}$ where $\psi,\varphi$ are unital and $\omega$ is tracial.

A family of unital sub-algebras  $(\mathcal{A}_i\subset \mathcal{A}:i\in I)$ is said to be \emph{cyclically conditionally free} (\emph{cyclically c-free} for short) with respect to $(\psi,\varphi,\omega)$ if
\begin{enumerate}
	\item the family $(\mathcal{A}_i\subset \mathcal{A}:i\in I)$ is conditionally free with respect to $(\psi,\varphi)$: for any sequence $(a_1,\ldots,a_n)$ of  $\cup_{i\in I}\mathcal{A}_i$ which is alternating and centred with respect to $\psi$, we have
$$
\psi(a_1 \cdots a_n) = 0 \textrm{ and } \varphi(a_1\cdots a_n) = \varphi(a_1) \cdots \varphi(a_n);
$$
 \item and, for any sequence $ (a_1,\ldots,a_n)$ of  $\cup_{i\in I}\mathcal{A}_i$ which is \emph{cyclically alternating} (thus $n \geq 2)$ and centered with respect to $\psi$, we have
\begin{equation}
\label{eqn:cdnw}
\omega(a_1\cdots a_n) = \varphi(a_1) \cdots \varphi(a_n).
\end{equation}
\end{enumerate}

\end{definition}
\begin{remark}
\begin{enumerate}
\item In the definition above, nothing about the value of $\omega$ on $1_{\mathcal{A}}$ is prescribed.
\item If $\mathcal{A}_1$ and $\mathcal{A}_2$ are cyclically c-free, the values of each of the three functionals $\psi$, $\varphi$ and $w$ are determined by their restrictions to the sub-algebras $\mathcal{A}_1$ and $\mathcal{A}_2$. This is well-known for $\psi$ and $\varphi$. For $\omega$, if $a_1 \cdots a_n$ is alternated but not cyclically alternated, that if $a_1$ and $a_n$ are in the same sub-algebra, $a_na_1,\ldots,a_{n-1}$ is cyclically alternated. We then proceed with $\psi$ centering this last word before applying formula \eqref{eqn:cdnw}.
\end{enumerate}
\end{remark}
As usual, we will say that two random variables $a$ and $b$ are \emph{cyclically c-free} if the two algebras they generate are conditionally c-free.
Let us consider two unital algebras $\mathcal{A}_1$ and $\mathcal{A}_2$ and two triples of linear functionals $\psi_1,\varphi_1,\omega_1 :\mathcal{A}_1\to \mathbb{C}$ and $\psi_2,\varphi_2,\omega_2 :\mathcal{A}_2\to \mathbb{C}$ such that $\psi_1(1_{\mathcal{A}_1})=\psi_2(1_{\mathcal{A}_2})=1$, $\varphi_1(1_{\mathcal{A}_1})=\varphi_2(1_{\mathcal{A}_2})=1$, $\omega_1(1_{\mathcal{A}_1})=\omega_2(1_{\mathcal{A}_2})$,  and $\omega_1,\omega_2$ are tracial. We denote by $$(\psi_1,\phi_1,\omega_1) \circledast (\psi_2,\phi_2,\omega_2)=(\psi_1*\psi_2,\varphi_1\prescript{}{\psi_1}{*}^{}_{\psi_2}\varphi_2,\omega_1\prescript{\varphi_1}{\psi_1}{\circledast}^{\varphi_2}_{\psi_2}\omega_2 )$$ the unique triple of linear functionals on $\mathcal{A}_1\star\mathcal{A}_2$  extending $(\psi_1,\varphi_1,\omega_1)$ and $(\psi_2,\varphi_2,\omega_2)$, such that $\omega_1\prescript{\varphi_1}{\psi_1}{\circledast}^{\varphi_2}_{\psi_2}\omega_2 $ is tracial, and such that $\mathcal{A}_1\subset\mathcal{A}_1\star\mathcal{A}_2$ and $\mathcal{A}_2 \subset \mathcal{A}_1\star\mathcal{A}_2$ are cyclically c-free with respect to $(\psi_1*\psi_2,\varphi_1\prescript{}{\psi_1}{*}^{}_{\psi_2}\varphi_2,\omega_1\prescript{\varphi_1}{\psi_1}{\circledast}^{\varphi_2}_{\psi_2}\omega_2 )$. Note that the cyclic c-free product $\omega_1\prescript{\varphi_1}{\psi_1}{\circledast}^{\varphi_2}_{\psi_2}\omega_2 $ depends on the six linear functionals.
\begin{remark}
We do not assume positivity for the three linear functionals $\varphi, \psi, \omega$ (this is why also we do note assume $\mathcal{A}$ to be equipped with an involution). The authors in \cite{arizmendi2022cyclic} introduced the notion of \emph{cyclic non-commutative probability space} (cncps), in our case the appropriate notion would be the one of \emph{cyclic-conditional probability space}, that is the data of a $\star$-algebra $\mathcal{A}$ over the complex numbers and of a triple of positive linear functionals $(\psi,\varphi,\omega)$ with $\omega$ tracial. The distribution of a self-adjoint element of $\mathcal{A}$ is the set of numbers $\{\psi(a^n),\varphi(a^n),\omega(a^n)\}$.
\end{remark}
\begin{example} If $a$ and $b$ are two cyclically c-free random variables then
\begin{align*}
\omega(ab)&=\omega((a-\psi(a))(b-\psi(b))) + \psi(a)\omega(b) + \omega(a)\psi(b) - \omega({1}_{\mathcal{A}})\psi(a)\psi(b)\\
&=\varphi((a-\psi(a)))\varphi((b-\psi(b))) + \psi(a)\omega(b) + \omega(a)\psi(b) - \omega({1}_{\mathcal{A}})\psi(a)\psi(b)\\
&=\varphi(a)\varphi(b)-\psi(a)\varphi(b)-\varphi(a)\psi(b)+\psi(a)\psi(b) + \psi(a)\omega(b) + \omega(a)\psi(b) - \omega({1}_{\mathcal{A}})\psi(a)\psi(b).
\end{align*}
\end{example}

\newcommand{\cas}[4]{\,\prescript{#1}{#2}{\circledast}^{#3}_{#4}\,}
\newcommand{\vp}{\varphi}
\begin{proposition}
The cyclically c-free product is associative.
\end{proposition}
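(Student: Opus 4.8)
The plan is the standard one for a product defined by an independence relation: reduce associativity of $\circledast$ to an associativity property of cyclic c-freeness itself, then conclude by the uniqueness built into the definition of $\circledast$. I identify $(\mathcal A_1\star\mathcal A_2)\star\mathcal A_3$, $\mathcal A_1\star\mathcal A_2\star\mathcal A_3$ and $\mathcal A_1\star(\mathcal A_2\star\mathcal A_3)$ through the canonical isomorphisms, and note that the standing normalizations give $(\omega_1\circledast\omega_2)(1)=\omega_1(1)=\omega_2(1)=\omega_3(1)$, so that $(\omega_1\circledast\omega_2)\circledast\omega_3$ and $\omega_1\circledast(\omega_2\circledast\omega_3)$ are both legitimately defined. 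Two of the three coordinates require nothing new: the first coordinate on either side is $\psi_1*\psi_2*\psi_3$, and the free product of unital functionals is associative by construction, while the second coordinate is the iterated c-free product, whose associativity is due to Bo\.zejko--Speicher \cite{bozejko1991independent,bozejko1996convolution}. Everything thus reduces to the third coordinate.

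The key lemma I would prove is the following \emph{associativity of cyclic c-freeness}: if $\mathcal A_1,\mathcal A_2,\mathcal A_3\subset\mathcal A$ are unital subalgebras, $(\psi,\varphi,\omega)$ is a triple with $\psi,\varphi$ unital and $\omega$ tracial, the pair $(\mathcal A_1,\mathcal A_2)$ is cyclically c-free with respect to $(\psi,\varphi,\omega)$, and the pair $(\mathcal A_1\vee\mathcal A_2,\mathcal A_3)$ is cyclically c-free with respect to $(\psi,\varphi,\omega)$, then the whole family $(\mathcal A_1,\mathcal A_2,\mathcal A_3)$ is cyclically c-free with respect to $(\psi,\varphi,\omega)$; and symmetrically with the roles of the two bracketings exchanged. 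Its $\psi$- and $\varphi$-clauses are precisely associativity of freeness and of c-freeness; only the $\omega$-clause is new. To prove it, let $w=a_1\cdots a_n$ be cyclically alternating in $\mathcal A_1,\mathcal A_2,\mathcal A_3$ and centered for $\psi$. If no letter of $w$ lies in $\mathcal A_3$, then $w$ is cyclically alternating and $\psi$-centered in $\mathcal A_1,\mathcal A_2$ and $\omega(w)=\varphi(a_1)\cdots\varphi(a_n)$ by cyclic c-freeness of $(\mathcal A_1,\mathcal A_2)$. Otherwise, collect the maximal runs of consecutive letters lying in $\mathcal A_1\cup\mathcal A_2$ into single elements of $\mathcal B:=\mathcal A_1\vee\mathcal A_2$; since at least one $\mathcal A_3$-letter is present and $\mathcal A_3$-letters are pairwise non-adjacent, the cyclic word becomes, after a rotation (legitimate since $\omega$ is tracial), $b_1c_1\cdots b_mc_m$ with $b_j\in\mathcal B$ a nonempty alternating $\mathcal A_1/\mathcal A_2$ run, $c_j\in\mathcal A_3$ and $m\geq 1$, and this word is cyclically alternating in $(\mathcal B,\mathcal A_3)$. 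Each $c_j$ is $\psi$-centered, and each $b_j$ is $\psi$-centered --- by hypothesis if its run is a single letter, by freeness of $\mathcal A_1,\mathcal A_2$ with respect to $\psi$ otherwise --- so cyclic c-freeness of $(\mathcal B,\mathcal A_3)$ gives $\omega(w)=\prod_j\varphi(b_j)\varphi(c_j)$. Finally each $b_j$ is an alternating (not necessarily cyclically alternating), $\psi$-centered word in $\mathcal A_1,\mathcal A_2$, so c-freeness of $(\mathcal A_1,\mathcal A_2)$ with respect to $(\psi,\varphi)$ evaluates $\varphi(b_j)$ as the product of $\varphi$ over the letters of $b_j$; assembling, $\omega(w)=\varphi(a_1)\cdots\varphi(a_n)$.

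With the lemma at hand the proposition follows. By construction $(\omega_1\circledast\omega_2)\circledast\omega_3$ is the tracial functional on $\mathcal A_1\star\mathcal A_2\star\mathcal A_3$ making $\mathcal A_1\star\mathcal A_2$ and $\mathcal A_3$ cyclically c-free and restricting on $\mathcal A_1\star\mathcal A_2$ to $\omega_1\circledast\omega_2$, which makes $\mathcal A_1,\mathcal A_2$ cyclically c-free; the lemma (brackets on the left) then makes $\mathcal A_1,\mathcal A_2,\mathcal A_3$ jointly cyclically c-free with respect to the triple whose $\omega$-coordinate is $(\omega_1\circledast\omega_2)\circledast\omega_3$. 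The lemma with brackets on the right shows $\mathcal A_1,\mathcal A_2,\mathcal A_3$ are jointly cyclically c-free with respect to the triple whose $\omega$-coordinate is $\omega_1\circledast(\omega_2\circledast\omega_3)$. Both triples have $\psi$-coordinate $\psi_1*\psi_2*\psi_3$ and the same (iterated c-free) $\varphi$-coordinate, both are tracial in the last slot, and both restrict on each $\mathcal A_i$ to $(\psi_i,\varphi_i,\omega_i)$; by the determination of $\omega$ from its restrictions noted right after the definition of cyclic c-freeness (expand multilinearly, center for $\psi$, reduce cyclically, exactly as in the Example), such a triple is unique. Hence the two triples coincide, i.e. $(\omega_1\circledast\omega_2)\circledast\omega_3=\omega_1\circledast(\omega_2\circledast\omega_3)$, which is associativity of $\circledast$.

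I expect the main obstacle to be the $\omega$-clause of the lemma, and within it the bookkeeping of the cyclic reduction: one has to peel off first the degenerate case (a word with no $\mathcal A_3$-letter), then check that the grouping into $\mathcal B$-runs produces a genuinely cyclically alternating $(\mathcal B,\mathcal A_3)$-word, and verify that the runs are $\psi$-centered \emph{as elements of $\mathcal B$}. Everything else is either classical (associativity of $*$ and of the c-free product) or the routine multilinear/$\psi$-centering reduction already illustrated in the Example.
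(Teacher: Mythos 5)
Your proof is correct and follows essentially the same route as the paper's: the core computation — grouping the letters of a cyclically alternating $\psi$-centered word into maximal $\mathcal A_1\star\mathcal A_2$-runs, using traciality of $\omega$ to rotate when both endpoints lie in $\mathcal A_1\star\mathcal A_2$ (so that the merged run is $\psi$-centered by freeness), and then expanding each $\varphi$ of a run via c-freeness to obtain $\varphi(a_1)\cdots\varphi(a_n)$ for either bracketing — is exactly the paper's argument, merely packaged as an explicit "associativity of cyclic c-freeness" lemma plus the uniqueness remark. Your treatment of the degenerate case (no $\mathcal A_3$-letter) and of the centering of merged runs is, if anything, slightly more careful than the paper's.
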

\begin{proof}
The proof is standard. Let $(\psi_i,\varphi_i, \omega_i)$ be three tiples of linear functionals with $\omega_1(1_{\mathcal{A}_1})=\omega_2(\mathcal{A}_2) = \omega_3(1_{\mathcal{A}_3})$.
We pick a cyclically alternated word $a_1\cdots a_n$ with $a=a_j\in\mathcal{A}_{i_j}$. We suppose that each $a_i$ is centered with respect to $\psi$ ($\psi_{i_j}$ if $a_i \in \mathcal{A}_{i_j}$). Then, by grouping together consecutive letters in the word $a$ belonging to $\mathcal{A}_1\star\mathcal{A}_2$, we may write $a=b_1\cdots b_p$ and each $b_i$ is centered respectively to $\psi_1*\psi_2$ (if $b_i\in\mathcal{A}_1\star\mathcal{A}_2$) or to $\psi_3$ (if $b_i \in \mathcal{A}_3$). Note that $(b_1,\ldots,b_n)$ may not be cyclically alternated; it is if (and only if) either $a_1$ or $a_n$ belongs to the algebra $\mathcal{A}_3$.

Let us suppose this is the case, and moreover that $a_n\in\mathcal{A}_3$.
We get 
\begin{align*}
    (\omega_1 \cas{\vp_1}{\psi_1}{\psi_2}{\vp_2} \omega_2) \cas{\vp_1{}_{\psi_1}*_{\psi_2}\vp_2}{\psi_1* \psi_2~~}{\varphi_3}{\psi_3} \omega_3(a_1\cdots a_n)
    &=\vp_1{}_{\psi_1}*_{\psi_2}\vp_2(b_1)\cdots \varphi_3(a_n) \\
    &=\varphi_{i_1}(a_1)\varphi_{i_2}(a_2)\cdots \varphi_3(a_n).
\end{align*}
On the contrary, if both $a_1$ and $a_n$ belongs to $\mathcal{A}_1\star\mathcal{A}_2$ and, say, $a_1\in\mathcal{A}_1$ and $a_n\in\mathcal{A}_2$
\begin{equation*}
    (\omega_1 \cas{\vp_1}{\psi_1}{\psi_2}{\vp_2} \omega_2) \cas{\vp_1{}_{\psi_1}\star_{\psi_2}\vp_2}{\psi_1* \psi_2~~}{\varphi_3}{\psi_3} \omega_3(a_1\cdots a_n) = \vp_1{}_{\psi_1}*_{\psi_2}\vp_2(b_pb_1)\cdots \varphi_3(b_{p-1}).
\end{equation*}
Since $b_pb_1$ is an alternated word on the letters $a's$, one gets the right-hand side of the last equality is equal to
\begin{equation*}
    \varphi_{i_{j_1}}(a_{j_1})\cdots \varphi_2(a_{j_k})\varphi_1(a_1)\cdots \varphi_{i_{l_p}}(a_{l_p}) \cdots \varphi_3(b_{p-1}) = \varphi_{i_1}(a_1)\cdots \varphi_{i_n}(a_n)
\end{equation*}
where $b_p = a_{j_1}\cdots a_{j_k}$ and $b_1=a_{l_1}\cdots a_{l_p}$. The same reasoning applies to compute 
\begin{equation*}
    \omega_1 \cas{\vp_1}{\psi_1}{\vp_2{}_{\psi_2}*_{\psi_3}\vp_3}{\psi_2*\psi_3} (\omega_2 \cas{\vp_2}{\psi_2}{\psi_3}{\vp_3} \omega_3).
\end{equation*}
\end{proof}
\subsection{Link with other cyclic independences}
Before explaining to which extent we recover already-known independences, we swiftly recall the connections between c-freeness and the Boolean, monotone and free independences. Let $\mathcal{A}$ be a unital complex algebra equipped with one unital linear functional $\varphi:\mathcal{A}\to \mathbb{C}$. By definition, unital sub-algebras  $(\mathcal{A}_i\subset \mathcal{A}:i\in I)$ are free with respect to $\varphi$ if and only if they are c-free with respect to $(\varphi,\varphi)$. In particular, we have $\varphi_1  \prescript{}{\varphi_1}{\ast}^{}_{\varphi_2}\varphi_2=\varphi_1*\varphi_2$ for c-free product of unital linear functionals. To relate the cyclically c-free product $\circledast$ to Boolean and monotone products, we 
restrict to augmented algebras.

Let $\mathcal{A}_1^0$ and $\mathcal{A}_2^0$ be two algebras and $\mathcal{A}_1$ and $\mathcal{A}_2$ their unitizations defined by $\mathcal{A}_i=\mathbb{C}1\oplus \mathcal{A}_i^0$. The linear functionals $\delta_i$ on $\mathcal{A}_i$ are defined by $\delta_i(\lambda\cdot 1+a^0)=\lambda$ for $\lambda\in \mathbb{C}$ and $a^0\in \mathcal{A}_i^0$. From now on, algebras are always assumed to be augmented when we use $\delta_i$.
In this setting, $ \mathcal{A}_1^0$ and  $\mathcal{A}_2^0$ are \emph{Boolean independent} in  $\mathcal{A}_1\star\mathcal{A}_2$ with respect to $\varphi_1  \prescript{}{\delta_1}{\ast}^{}_{\delta_2}\varphi_2$, \emph{monotone independent} with respect to $\varphi_1  \prescript{}{\delta_1}{\ast}^{}_{\varphi_2}\varphi_2$ and \emph{anti-monotone independent} with respect to $\varphi_1  \prescript{}{\varphi_1}{\ast}^{}_{\delta_2}\varphi_2$. We refer to \cite{hasebe2010new} for more details, and we emphasize the fact that the c-free product $\varphi_1  \prescript{}{\psi_1}{\ast}^{}_{\psi_2}\varphi_2$ reduces to well-known product of states in the cases where $\psi_i=\varphi_i$ or $\psi_i=\delta_i$.

Let us play the same game here, and consider the cyclic c-product in the cases where $\psi_i=\varphi_i$ or $\psi_i=\delta_i$. Let $\mathcal{A}_1=\mathbb{C}1\oplus \mathcal{A}_1^0$ and $\mathcal{A}_2=\mathbb{C}1\oplus \mathcal{A}_2^0$ two augmented unital algebras equipped with two triples of linear functionals $\psi_1,\varphi_1,\omega_1 :\mathcal{A}_1\to \mathbb{C}$ and $\psi_2,\varphi_2,\omega_2 :\mathcal{A}_2\to \mathbb{C}$ such that $\psi_1(1_{\mathcal{A}_1})=\psi_2(1_{\mathcal{A}_2})=1$, $\varphi_1(1_{\mathcal{A}_1})=\varphi_2(1_{\mathcal{A}_2})=1$, $\omega_1(1_{\mathcal{A}_1})=\omega_2(1_{\mathcal{A}_2})$,  and $\omega_1,\omega_2$ are tracial. For notional convenience, we set
$$(\psi,\varphi,\omega):= (\psi_1*\psi_2,\varphi_1\prescript{}{\psi_1}{*}^{}_{\psi_2}\varphi_2,\omega_1\prescript{\varphi_1}{\psi_1}{\circledast}^{\varphi_2}_{\psi_2}\omega_2 )$$ in such a way that $\mathcal{A}_1\subset\mathcal{A}_1\star\mathcal{A}_2$ and $\mathcal{A}_2 \subset \mathcal{A}_1\star\mathcal{A}_2$ are cyclically c-free with respect to $(\psi,\varphi,\omega)$.

The definition of c-freeness implies immediately the following.
\begin{proposition}
    With the notations introduced so far, we assume that $\psi_1=\delta_1$ and $\psi_2=\delta_2$. For any sequence $ (a_1,\ldots,a_n)$ of  $\mathcal{A}_1^0\cup \mathcal{A}_2^0$ which is alternating, we have
$$ \varphi(a_1\cdots a_n) = \varphi(a_1) \cdots \varphi(a_n).$$
For any sequence $ (a_1,\ldots,a_n)$ of  $\mathcal{A}_1^0\cup \mathcal{A}_2^0$ which is cyclically alternating, we have
$$ \omega(a_1\cdots a_n) = \varphi(a_1) \cdots \varphi(a_n).$$
In other words, $\mathcal{A}_1^0$ and $\mathcal{A}_2^0$ are \emph{cyclic-Boolean independent} in the sense of \cite[Definition 3.3]{arizmendi2022cyclic} with respect to $$(\varphi,\omega)=(\varphi_1\prescript{}{\delta_1}{*}^{}_{\delta_2}\varphi_2,\omega_1\prescript{\varphi_1}{\delta_1}{\circledast}^{\varphi_2}_{\delta_2}\omega_2).$$
\end{proposition}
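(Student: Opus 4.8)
The plan is to observe that the asserted identities are nothing but the defining relations of cyclic c-freeness, once one notices that $\delta_i$-centering is automatic on the augmentation ideals $\mathcal{A}_i^0$. Indeed, the whole argument is a short definition-chase, and the only points that deserve a line of justification are this centering remark and the precise matching with the cited definition of cyclic-Boolean independence.

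First I would record the elementary fact that, since $\psi_i=\delta_i$, every $a^0\in\mathcal{A}_i^0$ satisfies $\psi_i(a^0)=\delta_i(a^0)=0$. As the free product $\psi:=\psi_1*\psi_2=\delta_1*\delta_2$ is by definition the linear functional on $\mathcal{A}_1\star\mathcal{A}_2$ extending $\psi_1$ and $\psi_2$, it restricts to $\delta_i$ on $\mathcal{A}_i$, so every element of $\mathcal{A}_1^0\cup\mathcal{A}_2^0$ is centered with respect to $\psi$. Consequently, \emph{any} sequence $(a_1,\dots,a_n)$ drawn from $\mathcal{A}_1^0\cup\mathcal{A}_2^0$ which is alternating is automatically alternating \emph{and} $\psi$-centered, and likewise \emph{any} such sequence which is cyclically alternating is automatically cyclically alternating and $\psi$-centered (with $n\geq 2$).

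Second, I would invoke the two clauses of the definition of cyclic c-freeness for $\mathcal{A}_1\subset\mathcal{A}_1\star\mathcal{A}_2$ and $\mathcal{A}_2\subset\mathcal{A}_1\star\mathcal{A}_2$ with respect to $(\psi,\varphi,\omega)=(\psi_1*\psi_2,\ \varphi_1\prescript{}{\delta_1}{*}^{}_{\delta_2}\varphi_2,\ \omega_1\prescript{\varphi_1}{\delta_1}{\circledast}^{\varphi_2}_{\delta_2}\omega_2)$. Clause (1), applied to an alternating — hence $\psi$-centered — word in $\mathcal{A}_1^0\cup\mathcal{A}_2^0$, gives $\psi(a_1\cdots a_n)=0$ and $\varphi(a_1\cdots a_n)=\varphi(a_1)\cdots\varphi(a_n)$; clause (2), applied to a cyclically alternating — hence $\psi$-centered — word, gives $\omega(a_1\cdots a_n)=\varphi(a_1)\cdots\varphi(a_n)$. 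These are precisely the two displayed identities.

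Third, I would compare these two factorisation rules with \cite[Definition 3.3]{arizmendi2022cyclic}: cyclic-Boolean independence of two (non-unital) algebras with respect to a pair consisting of a unital functional and a tracial functional is exactly the conjunction of the Boolean factorisation of the unital functional on alternating words and the cyclic factorisation of the tracial functional on cyclically alternating words. Hence $\mathcal{A}_1^0$ and $\mathcal{A}_2^0$ are cyclic-Boolean independent with respect to $(\varphi,\omega)=(\varphi_1\prescript{}{\delta_1}{*}^{}_{\delta_2}\varphi_2,\ \omega_1\prescript{\varphi_1}{\delta_1}{\circledast}^{\varphi_2}_{\delta_2}\omega_2)$. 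I do not anticipate any real obstacle; the single subtle point is the centering observation of the first step (that $\psi_1*\psi_2$ genuinely restricts to $\delta_i$ on $\mathcal{A}_i$, so that elements of the augmentation ideals are $\psi$-centered), together with quoting the cited definition in a form that makes the two rule-systems coincide literally.
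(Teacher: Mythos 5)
Your proposal is correct and follows exactly the route the paper intends: the paper offers no written proof beyond the remark that ``the definition of c-freeness implies immediately the following,'' and your argument supplies precisely the one observation that makes this immediate, namely that $\psi_1*\psi_2=\delta_1*\delta_2$ restricts to $\delta_i$ on $\mathcal{A}_i$, so every element of $\mathcal{A}_1^0\cup\mathcal{A}_2^0$ is automatically $\psi$-centered and both clauses of cyclic c-freeness apply verbatim. Nothing further is needed.
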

Similarly, we obtain the cyclic-monotone independence as follows.
\begin{proposition}
    With the notations introduced so far, we assume that $\psi_1=\delta_1$ and $\psi_2=\varphi_2$. For any $a_1,\ldots,a_n\in \mathcal{A}_1^0$ and $b_0,\ldots,b_n\in \mathcal{A}_2$, we have
$$ \varphi(b_0a_1b_1\cdots a_nb_n) = \varphi(a_1\cdots a_n)\varphi(b_0)\cdots \varphi(b_n)$$
and
$$ \omega(b_0a_1b_1\cdots a_nb_n) = \omega(a_0\cdots a_n) \varphi(b_1)\cdots \varphi(b_{n-1})\varphi(b_nb_0).$$
In other words, $(\mathcal{A}_1^0,\mathcal{A}_2)$ are \emph{cyclic-monotone independent} in the sense of \cite[Definition 3.2]{collins2018free}, and in the sense of \cite[Definition 7.2]{arizmendi2022cyclic}, with respect to $$(\varphi,\omega)=(\varphi_1\prescript{}{\delta_1}{*}^{}_{\varphi_2}\varphi_2,\omega_1\prescript{\varphi_1}{\delta_1}{\circledast}^{\varphi_2}_{\varphi_2}\omega_2).$$
\end{proposition}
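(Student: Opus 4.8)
The plan is to read both identities straight off the definition of cyclic c-freeness. Under the hypotheses $\psi_1=\delta_1$ and $\psi_2=\varphi_2$ the three global functionals are $\psi=\delta_1*\varphi_2$, $\varphi=\varphi_1\prescript{}{\delta_1}{*}^{}_{\varphi_2}\varphi_2$ and $\omega=\omega_1\prescript{\varphi_1}{\delta_1}{\circledast}^{\varphi_2}_{\varphi_2}\omega_2$, and each of them restricts to $\psi_i$, $\varphi_i$, $\omega_i$ on $\mathcal{A}_i$. Two elementary remarks carry the whole argument: (a) since $\psi|_{\mathcal{A}_1}=\delta_1$, every element of $\mathcal{A}_1^0$ is automatically centred with respect to $\psi$; and (b) since $\psi|_{\mathcal{A}_2}=\psi_2=\varphi_2=\varphi|_{\mathcal{A}_2}$, for $b\in\mathcal{A}_2$ the element $b^\circ:=b-\varphi_2(b)1\in\mathcal{A}_2^0$ is centred both for $\psi$ and for $\varphi$, and in particular $\varphi$ annihilates $\mathcal{A}_2^0$.

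For the first identity I would substitute $b_i=\varphi_2(b_i)1+b_i^\circ$ into $b_0a_1b_1\cdots a_nb_n$ and expand by multilinearity. In each resulting term I pull the scalars out and multiply together any consecutive factors from $\{a_1,\ldots,a_n\}$ that are no longer separated by a letter from $\mathcal{A}_2^0$; since $\mathcal{A}_1^0$ is a subalgebra these merged blocks stay in $\mathcal{A}_1^0$, so the term is a scalar times an alternating word in $\mathcal{A}_1^0\cup\mathcal{A}_2^0$ all of whose letters are $\psi$-centred. Conditional freeness (part (1) of cyclic c-freeness) makes $\varphi$ of such a word factor as the product of $\varphi$ over its letters, and any surviving $b_j^\circ$ contributes the factor $\varphi(b_j^\circ)=0$ by remark (b); hence only the all-scalars term survives, which is $\varphi_2(b_0)\cdots\varphi_2(b_n)\,\varphi(a_1\cdots a_n)$. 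This is the asserted formula, and it is nothing but the already-recalled fact that $\varphi_1\prescript{}{\delta_1}{*}^{}_{\varphi_2}\varphi_2$ is the monotone product.

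For the second identity I would first use traciality of $\omega$ to rewrite $\omega(b_0a_1b_1\cdots a_nb_n)$ as $\omega(a_1b_1a_2b_2\cdots a_{n-1}b_{n-1}a_n(b_nb_0))$ and set $c_i:=b_i$ for $1\le i\le n-1$ and $c_n:=b_nb_0\in\mathcal{A}_2$, so that the scalars $\varphi_2(c_i)$ are exactly the $\varphi$-factors appearing on the right-hand side of the claim. Expanding each $c_i=\varphi_2(c_i)1+c_i^\circ$ as before, every term is a scalar times $\omega$ of a word obtained from $a_1c_1^\circ a_2c_2^\circ\cdots a_nc_n^\circ$ by deleting some of the $c_i^\circ$ and merging the adjacent $a$'s inside $\mathcal{A}_1^0$. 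When that word ends in $\mathcal{A}_2^0$ it is cyclically alternating and I apply part (2) of cyclic c-freeness directly; when it begins and ends in $\mathcal{A}_1^0$, I first use traciality of $\omega$ to merge its two boundary $\mathcal{A}_1^0$-blocks into one, turning it into a cyclically alternating word, and then apply part (2). In all cases $\omega$ of the word factors as the product of $\varphi$ over its letters, and a surviving $c_i^\circ$ forces a factor $\varphi(c_i^\circ)=0$, so only the all-scalars term remains, namely $\varphi_2(b_1)\cdots\varphi_2(b_{n-1})\,\varphi_2(b_nb_0)\,\omega(a_1\cdots a_n)$. Comparing the two formulas obtained with the definitions then exhibits $(\mathcal{A}_1^0,\mathcal{A}_2)$ as cyclic-monotone independent with respect to $(\varphi,\omega)$, both in the sense of \cite[Definition 3.2]{collins2018free} and in the sense of \cite[Definition 7.2]{arizmendi2022cyclic}.

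The step I expect to require the most care is the bookkeeping in the second identity: one must check that each term coming out of the expansion reduces, after the traciality rotations and the merging inside $\mathcal{A}_1^0$, to a genuinely cyclically alternating word that is still centred with respect to $\psi$ (so that part (2) of the definition of cyclic c-freeness applies), and that the short words of length one or two are covered as well. All of this is routine, and the rest of the argument is a plain multilinear expansion.
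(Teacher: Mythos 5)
Your argument is correct, but it takes a genuinely different route from the paper's. You prove both identities directly from the definition of cyclic c-freeness by the multilinear expansion $b_i=\varphi_2(b_i)1+b_i^\circ$, exploiting that every element of $\mathcal{A}_1^0$ is automatically $\psi$-centred (because $\psi_1=\delta_1$) and that $\psi$ and $\varphi$ both annihilate $\mathcal{A}_2^0$ (because $\psi_2=\varphi_2$), with traciality of $\omega$ used to close up the terms that are alternating but not cyclically alternating. The paper argues indirectly: the first identity is quoted from Franz's theorem that $\varphi_1\prescript{}{\delta_1}{*}^{}_{\varphi_2}\varphi_2$ is the monotone product, and the second is obtained by taking the cyclic-monotone product $\omega_1\unrhd\varphi_2$ of Collins--Hasebe--Sakuma, extending it to $\mathcal{A}_1\star\mathcal{A}_2$, checking that it satisfies the defining relation of the cyclic c-free product on cyclically alternating $\psi$-centred words (both sides vanish there), and invoking uniqueness of that product to conclude $\omega_1\unrhd\varphi_2=\omega$. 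Your computation is self-contained and in effect re-proves the existence statements the paper imports, at the cost of some bookkeeping; the paper's argument is shorter but rests on the cited literature. The bookkeeping you flag does go through: since the $a_i$'s are never deleted, every term with a surviving $c_i^\circ$ reduces, after rotation and merging, to a cyclically alternating $\psi$-centred word of length at least two, so condition (2) of cyclic c-freeness applies and the factor $\varphi(c_i^\circ)=0$ kills the term. (Incidentally, your right-hand side $\omega(a_1\cdots a_n)$ corrects the typo $\omega(a_0\cdots a_n)$ in the statement.)
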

\begin{proof}The first relation
$$ \varphi(b_0a_1b_1\cdots a_nb_n) = \varphi(a_1\cdots a_n)\varphi(b_0)\cdots \varphi(b_n)$$
is due to the monotone independence of $(\mathcal{A}_1^0,\mathcal{A}_2)$ with respect to $\varphi=\varphi_1\prescript{}{\delta_1}{*}^{}_{\varphi_2}\varphi_2$, which has been proved by Franz in~\cite{Franz2005}.

Let us prove the second relation. Following \cite{collins2018free}, there exists a tracial linear functional $\omega_1 \unrhd \varphi_2:I\to \mathbb{C}$ (called the cyclic-monotone product) such that the wanted relation holds: for any $a_1,\ldots,a_n\in \mathcal{A}_1^0$ and $b_0,\ldots,b_n\in \mathcal{A}_2$, we have
\begin{equation}
    \omega_1 \unrhd \varphi_2(b_0a_1b_1\cdots a_nb_n) = \omega(a_1\cdots a_n)\varphi(b_1)\cdots \varphi(b_{n-1})\varphi(b_{n}b_0).\label{cyclicmonotone}
\end{equation}
We extend $\omega_1 \unrhd \varphi_2$ to $\mathcal{A}_1\star\mathcal{A}_2$ by $\omega_1 \unrhd \varphi_2(b)=\omega_2(b)$ if $b\in \mathcal{A}_2$.
This implies that, for any sequence $(a_1,\ldots,a_n)$ in $\mathcal{A}_1\cup \mathcal{A}_2$ which is cyclically alternating and centred with respect to $\psi$, we have
$$ \omega_1 \unrhd \varphi_2(a_1\cdots a_n) = 0=[\omega_1 \unrhd \varphi_2(a_1)]\cdots [\omega_1 \unrhd \varphi_2(a_n)].$$
Therefore, $\mathcal{A}_1$ and $\mathcal{A}_2$ are also c-free with respect to $(\psi,\varphi,\omega_1 \unrhd \varphi_2)$ which means that $\omega_1 \unrhd \varphi_2=\omega$. Replacing $\omega_1 \unrhd \varphi_2$ by $\omega$ in Equation~\eqref{cyclicmonotone} yields the result.
\end{proof}
It remains the case where $\psi_1=\varphi_1$ and $\psi_2=\varphi_2$. We recall that the c-free product coincides with the free product: $\varphi_1  \prescript{}{\varphi_1}{\ast}^{}_{\varphi_2}\varphi_2=\varphi_1*\varphi_2$. Due to this perspective, and following the two last propositions, it is tempting to say that $\mathcal{A}_1$ and $\mathcal{A}_2$ are \emph{cyclic free} with respect to $(\varphi_1*\varphi_2,\omega_1\prescript{\varphi_1}{\varphi_1}{\circledast}^{\varphi_2}_{\varphi_2}\omega_2)$. Quite surprisingly, in the case of tracial linear functional, we obtain in fact infinitesimal freeness, as defined by Belinschi and Shlyakhtenko in~\cite{belinschi2012free}.

Because infinitesimal freeness with respect to $(\varphi,\varphi')$ is in general defined assuming that $\varphi'(1)=0$, let us give a slighty more general definition here.
\begin{definition}\label{def:inffree}Let $\mathcal{A}$ be a unital complex algebra equipped with two linear functionals $\varphi,\varphi':\mathcal{A}\to \mathbb{C}$ where $\varphi$ is unital.
Let $\mathcal{A}_1 ,\mathcal{A}_2 \subset \mathcal{A}$ be two unital subalgebras. The two subalgebras $\mathcal{A}_1$ and $\mathcal{A}_2$ are \emph{infinitesimally free} with respect to $(\varphi,\varphi')$ if, for any sequence $ (a_1,\ldots,a_n)$ of  $\mathcal{A}_1\cup \mathcal{A}_2$ which is alternating and centered with respect to $\varphi$, we have
\begin{align*}
    \varphi(a_1\cdots a_n) &= 0,\\
    \text{and}\ \ \ \  \varphi^{\prime}(a_1\cdots a_n) &= \sum_{i=1}^n \varphi^{\prime}(a_i)\varphi(a_1\cdots a_{i-1} \cdot a_{i+1}\cdots a_n)
.
\end{align*}
\end{definition}
\begin{proposition}
    With the notations introduced so far, we assume that $\psi_1=\varphi_1$ and $\psi_2=\varphi_2$. For any sequence $ (a_1,\ldots,a_n)$ of  $\mathcal{A}_1\cup \mathcal{A}_2$ which is alternating and centered with respect to $\varphi$, we have
\begin{align*} \varphi(a_1\cdots a_n) &= 0,\\
    \text{and}\ \ \ \  \omega(a_1\cdots a_n) &= \sum_{i=1}^n\omega(a_i)\varphi(a_{i+1}\cdots  a_n\cdot a_1\cdots a_{i-1}).\end{align*}
In particular, whenever $\varphi$ is tracial, $\mathcal{A}_1$ and $\mathcal{A}_2$ are \emph{infinitesimally free} in the sense of Definition~\ref{def:inffree} with respect to $$(\varphi,\omega)=(\varphi_1\prescript{}{\varphi_1}{*}^{}_{\varphi_2}\varphi_2,\omega_1\prescript{\varphi_1}{\varphi_1}{\circledast}^{\varphi_2}_{\varphi_2}\omega_2).$$
\end{proposition}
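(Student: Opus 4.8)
The plan is to prove the two identities for cyclically c-free $\mathcal{A}_1, \mathcal{A}_2$ with respect to $(\psi,\varphi,\omega)$ in the special case $\psi_i=\varphi_i$, hence $\psi = \varphi$. The first identity, $\varphi(a_1\cdots a_n)=0$ for an alternating sequence centered with respect to $\varphi=\psi$, is just the statement that $\mathcal{A}_1$ and $\mathcal{A}_2$ are free with respect to $\varphi=\psi$, which is part of the definition of c-freeness once $\psi=\varphi$ (recall from the preceding discussion that c-freeness with respect to $(\varphi,\varphi)$ is exactly freeness). So the real content is the formula for $\omega$.

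For the $\omega$-identity, I would start from the defining relation of cyclic c-freeness: for a \emph{cyclically} alternating word that is $\psi$-centered, $\omega(a_1\cdots a_n)=\varphi(a_1)\cdots\varphi(a_n)$. Since now $\psi = \varphi$ and the $a_i$ are already $\varphi$-centered, this forces $\omega(a_1\cdots a_n)=0$ whenever $a_1\cdots a_n$ is cyclically alternating. So the task reduces to: given an alternating (but not necessarily cyclically alternating) $\varphi$-centered word $a_1\cdots a_n$ — i.e.\ $a_1$ and $a_n$ lie in the same subalgebra — compute $\omega(a_1\cdots a_n)$. The natural move is to peel off $a_n$: write $a_n a_1 = \varphi(a_na_1)1 + (a_na_1 - \varphi(a_na_1)1)$, so that $a_na_1 - \varphi(a_na_1)1$ is a centered element of the same subalgebra as $a_1$ and $a_n$. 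Then $(a_na_1 - \varphi(a_na_1)1), a_2, \ldots, a_{n-1}$ is a cyclically alternating, $\varphi$-centered word, so $\omega$ applied to its cyclic product vanishes; by traciality of $\omega$, $\omega((a_na_1-\varphi(a_na_1)1)a_2\cdots a_{n-1}) = \omega(a_2\cdots a_{n-1}(a_na_1 - \varphi(a_na_1)1)) = 0$. Expanding and using traciality again gives $\omega(a_1\cdots a_n) = \varphi(a_na_1)\,\omega(a_2\cdots a_{n-1})$ — reducing the length by two — plus we must account for the c-freeness relation for $\varphi$ on the shorter word, which lets us replace $\varphi(a_na_1)$ by $\varphi(a_n)\varphi(a_1)$ only after we have isolated the centered parts carefully.

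More precisely, the clean way is an \emph{induction on $n$}. Using multilinearity, I would decompose each $a_i$ into its $\varphi$-centered part $\mathring{a}_i := a_i - \varphi(a_i)1$ plus the scalar $\varphi(a_i)1$, and expand $\omega(a_1\cdots a_n)$ over all $2^n$ choices — but since the $a_i$ are already centered this is vacuous; the substantive expansion happens one step down after the length reduction. Concretely: by the cyclic relation and traciality, for any $\varphi$-centered alternating word of length $n\geq 2$ with $a_1,a_n$ in the same algebra, one finds $\omega(a_1\cdots a_n) = \varphi(a_1)\varphi(a_n)\,\omega(a_2\cdots a_{n-1}) + \omega(a_1)\varphi(a_2\cdots a_n) + \omega(a_n)\varphi(a_1\cdots a_{n-1})$ after careful bookkeeping — wait, this needs to be checked against the target formula. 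The target is $\omega(a_1\cdots a_n) = \sum_{i=1}^n \omega(a_i)\,\varphi(a_{i+1}\cdots a_n\,a_1\cdots a_{i-1})$, the cyclic analogue of the infinitesimal-freeness formula. I would verify this by strong induction: reduce $\omega(a_1\cdots a_n)$ to a combination of $\omega$ on shorter words and $\varphi$ on words, apply the induction hypothesis and the freeness (c-freeness with $\psi=\varphi$) of $\mathcal{A}_1,\mathcal{A}_2$ with respect to $\varphi$ to evaluate the $\varphi$-terms, and match terms. Traciality of $\omega$ is used to bring the formula into cyclically symmetric shape; traciality of $\varphi$ is invoked only at the very end to conclude that, when $\varphi$ happens to be tracial, the cyclic formula for $\omega$ coincides with the infinitesimal-freeness formula $\omega(a_1\cdots a_n) = \sum_i \omega(a_i)\varphi(a_1\cdots a_{i-1}a_{i+1}\cdots a_n)$ of Definition~\ref{def:inffree}.

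\textbf{The main obstacle} I anticipate is the bookkeeping in the induction step: when one peels off the pair $a_n a_1$ and centers their product, the resulting cyclically alternating word has length $n-1$, but one must also track the "defect" terms $\omega(a_1)$, $\omega(a_n)$ and the scalars $\varphi(a_1)$, $\varphi(a_n)$ that appear when passing between $a_na_1$ and its centered part, and show that all these combine into exactly the cyclic sum $\sum_i \omega(a_i)\varphi(\cdots)$ without over- or under-counting — in particular the two "endpoint" terms $i=1$ and $i=n$ of the target sum must emerge from this peeling, while the "interior" terms $2\le i\le n-1$ come from the induction hypothesis applied to $\omega(a_2\cdots a_{n-1})$, correctly conjugated by the cyclic rotation. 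Getting the base cases $n=1,2$ right (and reconciling with the Example computed earlier in the text) is the sanity check that the constants match.
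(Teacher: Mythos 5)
Your plan coincides with the paper's proof: use traciality of $\omega$ to rotate $a_n$ to the front, recenter $a_na_1$ so that the cyclic c-freeness relation (applied to the cyclically alternating, centered word $(a_na_1-\varphi(a_na_1))a_2\cdots a_{n-1}$) kills the centered term and leaves $\omega(a_1\cdots a_n)=\varphi(a_na_1)\,\omega(a_2\cdots a_{n-1})$, then conclude by induction together with the free moment formula $\varphi(a_1\cdots a_nb_1\cdots b_m)=\delta_{m,n}\varphi(a_nb_1)\cdots\varphi(a_1b_m)$ to evaluate the remaining $\varphi$-factors. The tentative identity you flag mid-argument (with extra $\omega(a_1)$, $\omega(a_n)$ terms and with $\varphi(a_1)\varphi(a_n)$ in place of $\varphi(a_na_1)$) is not needed and not correct as written, but this is harmless: your first reduction is already exact, the would-be endpoint terms vanish because $a_2\cdots a_n$ and $a_1\cdots a_{n-1}$ are alternating and centered, and in the target sum only the middle index $i=(n+1)/2$ contributes, exactly as in the paper.
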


\begin{proof}The first equation is immediate, and shows the freeness of $\mathcal{A}_1$ from $\mathcal{A}_2$ with respect to $\varphi$. In particular, using \cite[Lemma 5.18]{nica2006lectures}, we know that for sequences $ (a_1,\ldots,a_n)$ and $(b_1,\ldots,b_m)$ of  $\mathcal{A}_1\cup \mathcal{A}_2$ which are alternating and centered with respect to $\varphi$, we have
\begin{equation*}
    \varphi(a_1\ldots a_n b_1\ldots b_m)=\varphi(a_n b_1)\cdots \varphi(a_1 b_m)\label{twowords}
\end{equation*}
if $m=n$ and $\varphi(a_1\ldots a_n b_1\ldots b_m)=0$ if $m\neq n$.

For the second equality, we distinguish two cases. If $n$ is even, $(a_1,\cdots,a_n)$ is cyclically alternated, so
$$\omega(a_1\cdots a_n) = 0 = \sum_{i=1}^n\omega(a_i)\varphi(a_{i+1}\cdots  a_n\cdot a_1\cdots a_{i-1}).$$
If $n$ is even, we prove the result by induction. The case $n=1$ is immediate. For $n$ even and greater than $2$, we can write
\begin{align*}
    \omega(a_1\cdots a_n) =& \omega(a_na_1\cdots a_{n-1})\\
    =&\omega((a_na_1-\varphi(a_na_1))a_2\cdots a_{n-1})+\varphi(a_na_1)\omega(a_2\cdots a_{n-1})\\
    =&0+\varphi(a_na_1)\sum_{i=2}^{n-1}\omega(a_i)\varphi(a_{i+1}\cdots  a_{n-1}\cdot a_2\cdots a_{i-1})\\
    =&\varphi(a_na_1)\omega(a_{(n+1)/2})\varphi(a_{n-1}a_2)\cdots \varphi(a_{(n+3)/2}a_{(n-1)/2})\\
    =&\omega(a_{(n+1)/2})\varphi(a_{(n+3)/2}\cdots  a_{n}\cdot a_1\cdots a_{(n-1)/2})\\
    =&\sum_{i=1}^{n}\omega(a_i)\varphi(a_{i+1}\cdots  a_{n}\cdot a_1\cdots a_{i-1}).
\end{align*}
\end{proof}
In case of traciality, the infinitesimal freeness reduces in fact to a simpler condition as we show below (see also \cite[Lemma 2.2]{shlyakhtenko2018free} which is essentially the same).
\begin{lemma}Let $\mathcal{A}$ be a unital complex algebra equipped with two linear functionals $\varphi,\varphi':\mathcal{A}\to \mathbb{C}$ where $\varphi$ is unital.
Let $\mathcal{A}_1 ,\mathcal{A}_2 \subset \mathcal{A}$ be two unital subalgebras. Whenever $\varphi$ and $\varphi'$ are tracial, the following statements are equivalent:
\begin{enumerate}
    \item[(i)] $\mathcal{A}_1$ and $\mathcal{A}_2$ are infinitesimally free with respect to $(\varphi,\varphi')$;
    \item[(ii)] $\mathcal{A}_1$ and $\mathcal{A}_2$ are cyclically c-free with respect to $(\varphi,\varphi,\varphi')$;
    \item[(iii)] For any sequence $ (a_1,\ldots,a_n)$ of  $\mathcal{A}_1\cup \mathcal{A}_2$ which is cyclically alternating and centered with respect to $\varphi$, we have
$$ \varphi(a_1\cdots a_n) = \varphi'(a_1\cdots a_n)= 0.$$
\end{enumerate}
\end{lemma}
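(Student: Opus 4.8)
The plan is to prove the cycle of implications $(i)\Rightarrow(iii)\Rightarrow(ii)\Rightarrow(i)$, exploiting traciality at each turn to convert the ``flat'' alternating formulas of infinitesimal freeness into the ``cyclic'' ones of cyclic c-freeness.

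First, assume $(i)$. Since $\varphi$ is tracial and $\varphi|_{\mathcal{A}_1},\varphi|_{\mathcal{A}_2}$ make $\mathcal{A}_1,\mathcal{A}_2$ free, the value $\varphi(a_1\cdots a_n)$ on a cyclically alternating centered word vanishes; this is the standard fact that freeness of tracial states kills cyclically alternating centered words. For $\varphi'$, take such a word $(a_1,\ldots,a_n)$, cyclically alternating and $\varphi$-centered (so $n\geq 2$), and apply the infinitesimal-freeness formula from Definition~\ref{def:inffree}:
$$
\varphi'(a_1\cdots a_n)=\sum_{i=1}^n\varphi'(a_i)\,\varphi(a_1\cdots a_{i-1}a_{i+1}\cdots a_n).
$$
For each $i$, the deleted word $a_1\cdots a_{i-1}a_{i+1}\cdots a_n$ is still alternating of length $n-1\geq 1$ and $\varphi$-centered; by traciality of $\varphi$ it equals (up to cyclic rotation, which does not change the value) $\varphi$ of a cyclically alternating centered word of length $n-1$, hence is $0$ unless $n-1\leq 1$, i.e.\ unless $n=2$. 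When $n=2$ the deleted word is the single centered letter $a_j$, $j\neq i$, so $\varphi(a_j)=0$ as well. In all cases every summand vanishes, giving $\varphi'(a_1\cdots a_n)=0$, which is $(iii)$.

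Next, $(iii)\Rightarrow(ii)$. Condition $(ii)$ asks that $\mathcal{A}_1,\mathcal{A}_2$ be cyclically c-free with respect to $(\varphi,\varphi,\varphi')$, i.e.\ first that they are c-free with respect to $(\varphi,\varphi)$ --- which, as recalled in the paragraph preceding Definition~\ref{def:inffree}, is exactly freeness with respect to $\varphi$, and follows from the vanishing of $\varphi$ on alternating centered words (a special case of $(iii)$ after $\psi$-centering the endpoints, or simply the hypothesis restricted to alternating words, which reduces to cyclically alternating ones after a cyclic shift once one is $\varphi$-centered) --- and second that for every cyclically alternating $\varphi$-centered word one has $\varphi'(a_1\cdots a_n)=\varphi(a_1)\cdots\varphi(a_n)=0$, which is precisely the content of $(iii)$ since each $\varphi(a_i)=0$. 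Here I should be slightly careful: in the definition of cyclic c-freeness the ``$\varphi$'' playing the role of the c-free state and the ``$\psi$'' playing the role of the defining state coincide (both are $\varphi$), so centering with respect to $\psi$ is centering with respect to $\varphi$, and the product formula $\varphi(a_1\cdots a_n)=\varphi(a_1)\cdots\varphi(a_n)=0$ for alternating centered words is automatic; thus $(ii)$ holds.

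Finally, $(ii)\Rightarrow(i)$. This is essentially the specialization $\psi_1=\varphi_1$, $\psi_2=\varphi_2$ already treated in the Proposition immediately above this lemma: there it is shown that cyclic c-freeness with $\psi_i=\varphi_i$ gives, for alternating $\varphi$-centered words, both $\varphi(a_1\cdots a_n)=0$ and $\omega(a_1\cdots a_n)=\sum_{i=1}^n\omega(a_i)\varphi(a_{i+1}\cdots a_n a_1\cdots a_{i-1})$, and that when $\varphi$ is tracial the right-hand side equals $\sum_i\varphi'(a_i)\varphi(a_1\cdots a_{i-1}a_{i+1}\cdots a_n)$, which is the infinitesimal-freeness formula of Definition~\ref{def:inffree} with $\varphi'=\omega$. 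So $(ii)\Rightarrow(i)$ follows directly by invoking that proposition. The only real subtlety --- and the step I would be most careful about --- is the bookkeeping of traciality: checking that each cyclic rotation used to pass between $\varphi(a_1\cdots a_{i-1}a_{i+1}\cdots a_n)$ and $\varphi(a_{i+1}\cdots a_n a_1\cdots a_{i-1})$ is legitimate and that ``alternating and centered'' is preserved (it is, since deleting one letter from an alternating word of length $\geq 2$ and cyclically rotating keeps it alternating once one observes the deleted neighbours were in different subalgebras), together with the degenerate small-$n$ cases. No genuinely hard estimate is involved; it is a matter of organizing these elementary reductions cleanly.
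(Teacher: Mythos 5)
Your proof is correct and follows essentially the same route as the paper's: the equivalence of (ii) and (iii) is definition-chasing, (i)$\Rightarrow$(iii) is the deletion argument, and the remaining direction rests on the centering induction, which you delegate to the Proposition preceding the lemma rather than redoing it as the paper does for (iii)$\Rightarrow$(i). One harmless slip: after deleting $a_i$ from a cyclically alternating word and rotating, the result is alternating but generally \emph{not} cyclically alternating ($a_{i-1}$ and $a_{i+1}$ lie in the same subalgebra); the summands still vanish because freeness already kills all alternating $\varphi$-centered words.
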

\begin{proof}We first note that in each statement, $\mathcal{A}_1$ and $\mathcal{A}_2$ are free with respect to $\varphi$. The definition of cyclic c-freeness yields the equivalence between the two last statements, as
$\varphi'(a_1\cdots a_n)=0=\varphi(a_1)\cdots\varphi(a_n)$.

Let us prove that $(i)$ and $(iii)$ are equivalent.
If $\mathcal{A}_1$ and $\mathcal{A}_2$ are infinitesimally free then for any cyclically alternating product of centered elements, taking off one variable $a_i$ results in an alternated word (up to cyclicity), thus
$$
\varphi^{\prime}(a_1\cdots a_n) =  \sum_{i=1}^n \varphi^{\prime}(a_i)\varphi(a_1\cdots a_{i-1} \cdot a_{i+1}\cdots a_n)=\sum_{i=1}^n \varphi^{\prime}(a_i)\varphi(a_{i+1}\cdots a_n\cdot a_1\cdots a_{i-1} )=0.
$$
Conversely, if $(iii)$ holds, for any cyclically alternating product of centered elements,
$$
\varphi^{\prime}(a_1\cdots a_n) =0=  \sum_{i=1}^n \varphi^{\prime}(a_i)\varphi(a_1\cdots a_{i-1} \cdot a_{i+1}\cdots a_n).$$
It remains the case of an alternating product of centered elements, which is not cyclically alternating. Pick $a_1\cdots a_n$ an alternating product of centered elements such that $a_1, a_n$ belongs to the same sub-algebra. Then
$
(a_n\cdot a_1 - \varphi(a_1a_n))a_2\cdots a_{n-1}
$
is cyclically alternating, thus $\varphi^{\prime}((a_n\cdot a_1 - \varphi(a_na_1))a_2\cdots a_n)=0$ and by a direct induction
\begin{align*}
    \varphi^{\prime}(a_1a_2\cdots a_n) &=0+\varphi(a_1a_n)  \varphi^{\prime}(a_2\cdots a_{n-1})\\
    &=\cdots \\
    &= \varphi(a_1a_n)\varphi(a_2a_{n-1})\cdots \varphi(a_{\frac{n-1}{2}}a_{\frac{n+2}{2}})\varphi^{\prime}(a_{\frac{n+1}{2}})\\
    &= \sum_{i=1}^n \varphi^{\prime}(a_i)\varphi(a_1\cdots a_{i-1} \cdot a_{i+1}\cdots a_n),
\end{align*}
where we used \eqref{twowords} for the last equality.
\end{proof}

\section{Asymptotic independences of random matrices}
\label{sec:asymptotic}
We come in this section to the core of our work; that is showing that the \emph{Vortex model} displays asymptotic conditional freeness and cyclic-conditional freeness.
For a family ${\bf A}_N = \{A_N^k: k \in K\}$ of $N\times N$ matrices, we recall that the non-commutative distribution of ${\bf A}_N$ is the linear functional $\psi_{{\bf A}_N}:\mathbb{C}\langle X_k:k\in K\rangle \to \mathbb{C}$ given by
$$\psi_{{\bf A}_N}(P)=\trn(P({\bf A}_N)).$$
The non-commutative distribution of ${\bf A}_N$ with respect to the vector state $v_N\in \mathbb{C}^N$ is the linear functional $\varphi_{{\bf A}_N}^{v_N}:\mathbb{C}\langle X_k:k\in K\rangle \to \mathbb{C}$ defined by
$$\varphi^{v_N}_{{\bf A}_N}(P)=\langle P({\bf A}_N) v_N,v_N\rangle=\Trn(v_Nv_N^{\star}P({\bf A}_N)).$$

\subsection{Asymptotic conditional freeness}

\begin{theorem}
\label{th:cfreeness}Let $R>0$, let $(v_N)_{N\geq 1}$ be a sequence of unit vectors of $\mathbb{C}^N$ and $U_{N} \in U(N)\cap \mathrm{Stab}(v_N)$ be a uniform random unitary matrix leaving $v_N$ invariant.

As $N$ tends to infinity, for any polynomial $P\in \mathbb{C}\langle X_k,Y_k:~k\in K\rangle$,
$$\mathbb{E}[\psi_{{\bf A}_N,U_N{\bf B}_NU_N^{\star}}(P)]=\psi_{{\bf A}_N}*\psi_{{\bf B}_N}(P)+O(N^{-1})$$
and
$$\mathbb{E}[\varphi^{v_N}_{{\bf A}_N,U_N{\bf B}_NU_N^{\star}}(P)]=\varphi^{v_N}_{{\bf A}_N}\prescript{}{\psi_{{\bf A}_N}\!\!}{*}^{}_{\psi_{{\bf B}_N}}\varphi^{v_N}_{{\bf B}_N}(P)+O(N^{-1})$$
    uniformly for the choice of any sequences ${\bf A}_N = \{A_N^k: k \in K\}$ and ${\bf B}_N = \{B^k_N:~k \in K\}$ of $N\times N$ matrices ($N\geq 1$) bounded in operator norm by $R$.
\end{theorem}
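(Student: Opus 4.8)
The plan is to reduce the statement about a uniform unitary $U_N \in U(N)\cap \mathrm{Stab}(v_N)$ to the already-established results about a \emph{fully} Haar-distributed unitary (Theorem~\ref{th:asymptotic_freeness} and Proposition~\ref{prop:freenesswithunitaries}) by realizing $U_N$ inside a larger Haar-random unitary. Concretely, write $\mathbb{C}^N = \mathbb{C}v_N \oplus v_N^{\perp}$ and let $V_N$ be a Haar-distributed unitary on the $(N-1)$-dimensional space $v_N^{\perp}$; then $U_N = 1\oplus V_N$ has exactly the required law. A convenient way to produce $V_N$ is to conjugate: if $W_N$ is a genuinely Haar-distributed $N\times N$ unitary, then $W_N(1\oplus V_N)W_N^{\star}$ has the law of a Haar unitary conditioned to fix the \emph{random} vector $W_N v_N$; alternatively, and more usefully, one writes $U_N = P_N + (I_N - P_N)\widetilde{U}_N(I_N-P_N)$-type identities, where $P_N = v_Nv_N^{\star}$, and uses that $I_N - P_N$ is a rank-$(N-1)$ projection. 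The cleanest route is: set $Q_N = I_N - v_Nv_N^{\star}$, and observe $U_N = v_Nv_N^\star + Q_N U_N Q_N$, with $Q_N U_N Q_N$ distributed as a Haar unitary on $\mathrm{ran}(Q_N)$.

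The key computational step is then the following substitution. For a monomial $P(\bfA_N, U_N\bfB_NU_N^\star)$, expand $U_N = Q_N U_N Q_N + v_Nv_N^\star$ inside each occurrence of $U_N$ and $U_N^\star$. This turns $U_N\bfB_NU_N^\star$ into a sum of four terms; the dominant one is $(Q_NU_NQ_N)\bfB_N(Q_NU_NQ_N)^\star$, and the other three each carry a factor $v_Nv_N^\star$, hence are rank-one and contribute $O(1/N)$ to $\trn$ of any bounded monomial (and $O(1)$ but with an explicit structure to $\varphi^{v_N}$). Replacing $Q_NU_NQ_N$ by a bona fide Haar unitary $\widetilde U_N$ on $\mathbb{C}^N$ changes things only by rank-one corrections as well, since $\widetilde U_N - Q_N\widetilde U_N Q_N$ has rank $\le 2$. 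Thus, up to $O(1/N)$ for $\trn$, we may replace $U_N$ by a Haar unitary $\widetilde U_N$ and apply Theorem~\ref{th:asymptotic_freeness}, giving the first identity $\mathbb{E}[\psi_{\bfA_N,U_N\bfB_NU_N^\star}(P)] = \psi_{\bfA_N}*\psi_{\bfB_N}(P) + O(N^{-1})$. For the vector-state identity, one keeps track of the rank-one terms: $\varphi^{v_N}(M) = \Trn(v_Nv_N^\star M)$, and the projection $v_Nv_N^\star$ interacts with the $v_Nv_N^\star$ factors produced in the expansion. Expanding a monomial in $\bfA_N$ and $U_N\bfB_NU_N^\star$ and inserting $\sum = Q_NU_NQ_N + v_Nv_N^\star$, the surviving leading-order contributions are exactly those in which the word is "cut" at each $v_Nv_N^\star$ into alternating blocks; summing these is precisely the combinatorial recipe defining the c-free product $\varphi^{v_N}_{\bfA_N}\prescript{}{\psi_{\bfA_N}}{*}^{}_{\psi_{\bfB_N}}\varphi^{v_N}_{\bfB_N}$. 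Here one uses Proposition~\ref{prop:freenesswithunitaries} to argue that the blocks landing on the $\psi$-side (traces of alternating words with interlaced Haar unitaries) vanish unless they are $\psi$-centered-trivial, which pins down the $\psi_{\bfA_N}*\psi_{\bfB_N}$-weights appearing in the c-free recursion.

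The main obstacle is the second (vector-state) identity: one must show that inserting the resolution $U_N = Q_NU_NQ_N + v_Nv_N^\star$ into $\varphi^{v_N}[P(\bfA_N, U_N\bfB_NU_N^\star)]$ and collecting leading-order terms reproduces \emph{exactly} the c-free product, rather than merely something of the right order. I would organize this by an induction on the degree of $P$: reduce to alternating words in centered letters $\mathring a \in \mathbb{C}\langle\bfA_N\rangle$ and $U_N\mathring bU_N^\star$, $\mathring b\in\mathbb{C}\langle\bfB_N\rangle$ (centered with respect to $\psi_{\bfA_N}$, $\psi_{\bfB_N}$ respectively); show each maximal run of $B$-letters $U_N \mathring b_1 U_N^\star \cdots U_N \mathring b_j U_N^\star$, when the internal $U_N^\star U_N = I_N$ is \emph{not} simplified but the outer two $U_N$'s are expanded, splits off a $\varphi^{v_N}_{\bfB_N}$-factor (from the $v_Nv_N^\star$ branch) plus a lower-order remainder, while the $Q_NU_NQ_N$ branch forces the enclosed trace to be $O(1/N)$ by asymptotic freeness with unitaries (Proposition~\ref{prop:freenesswithunitaries}); iterate. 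The bookkeeping — tracking which of the four branches at each $U_N$ survives, matching signs and the $\psi$-weights, and checking uniformity of the $O(N^{-1})$ bound in $R$ via operator-norm bounds on all submonomials — is the laborious part, but it is exactly parallel to the argument in \cite{cebron2022freeness} and should go through with the estimates of Proposition~\ref{prop:freenesswithunitaries} in hand.
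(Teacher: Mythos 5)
Your overall strategy --- writing $U_N = v_Nv_N^{\star} + Q_NU_NQ_N$ with $Q_N=I_N-v_Nv_N^{\star}$, expanding every occurrence of $U_N$ and $U_N^{\star}$ in a $\psi_{\bfA_N}*\psi_{\bfB_N}$-centered alternating word, identifying the all-$v_Nv_N^{\star}$ term with the c-free product, and killing the remaining terms via Proposition~\ref{prop:freenesswithunitaries} --- is the paper's strategy. But there is a genuine gap at the step where you treat the terms containing $Q_NU_NQ_N$: you propose to replace $Q_NU_NQ_N$ by a Haar unitary $\widetilde U_N$ on all of $\mathbb{C}^N$, justified by the observation that $\widetilde U_N - Q_N\widetilde U_NQ_N$ has rank at most $2$. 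That observation only compares $\widetilde U_N$ with \emph{its own} compression; it says nothing about the law of $Q_NU_NQ_N$. In fact $Q_N\widetilde U_NQ_N$ (a truncation of a size-$N$ Haar unitary, whose singular values are strictly below $1$) and $Q_NU_NQ_N$ (an honest Haar unitary of $v_N^{\perp}$, embedded) have different distributions, and you have not exhibited --- and cannot exhibit by a rank argument --- a coupling of $U_N$ with a Haar unitary of $U(N)$ whose difference is of low rank. Consequently Theorem~\ref{th:asymptotic_freeness} and Proposition~\ref{prop:freenesswithunitaries}, which are stated for Haar unitaries of $U(N)$, are never legitimately brought to bear on $U_N$ in your argument; this affects both identities, since even for the normalized trace the dominant term $(Q_NU_NQ_N)$-branch is exactly the one that must produce $\psi_{\bfA_N}*\psi_{\bfB_N}(P)+O(N^{-1})$.

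The missing idea is to go \emph{down} in dimension rather than up. Writing $Q_NU_NQ_N=pV_{N-1}p^{\star}$ with $p$ the isometry from $\mathbb{C}^{N-1}$ onto $v_N^{\perp}$ and $V_{N-1}$ Haar on $U(N-1)$, each remainder term becomes, after cycling the trace, $\Trnm\big(p^{\star}M_N(\ell+1)M_N(1)p\cdot V_{N-1}^{n_1}\cdot p^{\star}M_N(2)p\cdot V_{N-1}^{n_2}\cdots\big)$, i.e.\ a trace in size $N-1$ with a \emph{genuine} Haar unitary interlaced between the compressed matrices $p^{\star}M_ip$. One then checks that these compressions are no longer exactly centered --- e.g.\ $\Trnm(p^{\star}P_i(\bfA_N)p)=-\varphi^{v_N}(P_i(\bfA_N))$ --- but still have normalized trace $O(1/N)$, which is precisely the relaxed hypothesis under which Proposition~\ref{prop:freenesswithunitaries} applies; this yields $O(N^{-2})$ for $\trn$ of each remainder and $O(N^{-1})$ for the vector-state terms (with one term vanishing identically because $p^{\star}v_N=0$). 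You have the right tool in hand, but without performing this compression the application of the Haar-unitary estimates is not justified, and the induction you sketch for the vector-state identity rests on exactly this unproved step.
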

\begin{remark}\begin{itemize}
    \item 
We have chosen here to rotate randomly the family $\bfB_N$, but we could have done the same on $\bfA_N$ with the same results:
\begin{align*}
    &\mathbb{E}[\psi_{V_N{\bf A}_NV_N^{\star},U_N{\bf B}_NU_N^{\star}}(P)]=\psi_{{\bf A}_N}*\psi_{{\bf B}_N}(P)+O(N^{-1}), \\
    &\mathbb{E}[\varphi^{v_N}_{V_N{\bf A}_NV_N^{\star},U_N{\bf B}_NU_N^{\star}}(P)]=\varphi^{v_N}_{{\bf A}_N}\prescript{}{\psi_{{\bf A}_N}\!\!}{*}^{}_{\psi_{{\bf B}_N}}\varphi^{v_N}_{{\bf B}_N}(P)+O(N^{-1})
\end{align*}
where $V_N$ and $U_N$ are two independent uniform unitary matrices drawn from $U(N)\cap\mathrm{Stab}(v_N)$.
\item Whenever $v_N=(1/\sqrt{N},\ldots,1/\sqrt{N})$, the group $U(N)\cap \mathrm{Stab}(v_N)$ is in fact the bistochastic group. The theorem states that conjugating by the bistochastic group does not only imply asymptotic freeness with respect to the normalized trace $\trn$ (as already shown by Gabriel in~\cite{gabriel2015combinatorial}) but also asymptotic $c$-freeness with respect to the pair trace/antitrace $(\trn,\varphi^{v_N}(M))$, where the \emph{antitrace} is the unital linear functional $\varphi^{v_N}(M)=\frac{1}{N}\sum_{i,j}M_{ij}$ as in \cite{cebron2016traffic}.
\end{itemize}
\end{remark}

\begin{proof}For notational simplicity, we denote by $\psi_N$ the free product $\psi_{{\bf A}_N}*\psi_{{\bf B}_N}$ and by $\varphi_N$ the conditionally free product $\varphi^{v_N}_{{\bf A}_N}\prescript{}{\psi_{{\bf A}_N}\hspace{-0.2cm}}{*}^{}_{\psi_{{\bf B}_N}}\varphi^{v_N}_{{\bf B}_N}$.
This will ease the proof to allow the polynomial $P$ of the proposition to vary with $N\geq 0$. In fact, we prove a slightly more general result:
all estimates about the distributions of $\mathbf{A}_{N}$ and $\mathbf{B}_{N}$ (inequalities involving $\varphi_{\mathbf{A}_N}$ and $\psi_{\mathbf{B}_N}$) we will prove are true for sequences of polynomials $P = (P_N)_{N\geq 0}$ with coefficients and degrees bounded uniformly in $N$.

This makes possible to write any polynomial in $\mathbb{C}\langle X_k,Y_k:~k\in K \rangle$ as a sum of polynomials in $\mathbb{C}\langle X_k:~k\in K \rangle$, polynomials in $\mathbb{C}\langle Y_k:~k\in K \rangle$ and polynomials which are alternating and centered with respect to $\psi_N$ (this decomposition depending on $N$).

We consider a sequence of polynomials $(P_N)_{N\geq 0}$ with bounded degree and coefficients in $\mathbb{C}\langle X_k,Y_k:~k\in K \rangle$, such that each $P_N$ is alternating and centered with respect to $\psi_N$. For the convenience of the reader, we will drop here forth the dependence in $N$ and consider the case where the degree is constant and $P$ is starting by an element of $\mathbb{C}\langle X_k:~k\in K \rangle$:
$$
P=P_1(X)Q_1(Y)\cdots P_s(X)Q_s(Y)
$$
for polynomials $P_1,\ldots,Q_s$ such that the norms of their
coefficients are bounded uniformly in $N$, such that $(P_1(X),Q_1(Y),\ldots,P_s(X))$ is centred with respect to $\psi_N$, and  such that $\trn(Q_s({\bf B}_N))=0$ (if $P$ is cyclically alternating) or $Q_s(Y)=1$ (if not). Without loss of generality, let us assume that $v_N$ is the first vector $\varepsilon_1$ of the canonical basis of $\mathbb{C}^N$. Then,
$$U_N=\begin{pmatrix}
1& 0 \\
0& V_{N-1}\end{pmatrix},$$
where $V_{N-1}$ is uniformly distributed on $U(N-1)$. In particular, denoting by $p$ the $N\times (N-1)$ matrix
$$p= \begin{pmatrix}
0&0&\cdots&0\\
1 & 0 & \cdots & 0 \\
0 & \ddots & \ddots & \vdots \\
\vdots & \ddots & \ddots & 0 \\
0 & \cdots & 0 & 1 \end{pmatrix},$$
we have
$U_N= \varepsilon_1\varepsilon_1^{\star}+pV_{N-1}p^{\star}$.
Note that
$$P({\bf A}_N, U_N{\bf B}_NU_N^{\star}) = P_1({\bf A}_N)U_NQ_1({\bf B}_N)U_N^{\star}\cdots P_s({\bf A}_N)U_NQ_s({\bf B}_N)U_N^\star.$$
We record the positions (in the polynomial above) of each $U_N$ and $U_N^\star$ with the sequence $\{1,\dots,s\}$. If $S=\{s_1 < \ldots < s_\ell\}$ is a subset of $\{1,2,\dots,s\}$, we denote by $R_S$ the polynomial
$$P_1({\bf A}_N)U_NQ_1({\bf B}_N)U_N^{\star}\cdots P_s({\bf A}_N)U_NQ_s({\bf B}_N)U_N^\star$$
where each $U_N$ (resp. $U_N^\star$) is replaced by by $pV_{N-1}p^\star$ (resp. $p V_{N-1}^\star p^\star$)  if his position is in $S$, and by $\varepsilon_1\varepsilon_1^{\star}$ if not.
Developing each $U_N$ in $\varepsilon_1\varepsilon_1^{\star}+pV_Np^{\star}$, we get
\begin{align*}P({\bf A}_N, U_N{\bf B}_NU_N^{\star})=&\sum_{S\subset \{1,\dots,s\}} R_S\\
=&P_1({\bf A}_N)\varepsilon_1\varepsilon_1^{\star}Q_1({\bf B}_N)\varepsilon_1\varepsilon_1^{\star}\cdots P_s({\bf A}_N)\varepsilon_1\varepsilon_1^{\star}Q_s({\bf B}_N)\varepsilon_1\varepsilon_1^{\star}+\sum_{\emptyset \neq S\subset \{1,\dots,s\}}R_S\\
=&P_1({\bf A}_N)\varepsilon_1\varepsilon_1^{\star}\cdot\varphi_N(Q_1)\cdots \varphi_N(P_s)\varphi_N(Q_s)+\sum_{\emptyset \neq S\subset \{1,\dots,s\}}R_S.
\end{align*}
%
In particular,
\begin{equation}\trn(P({\bf A}_N, U_N{\bf B}_NU_N^{\star}))=\frac{1}{N}\varphi_N(P_1)\varphi_N(Q_1)\cdots \varphi_N(P_s)\varphi_N(Q_s)+ \sum_{\emptyset \neq S\subset \{1,\dots,s\}}\trn(R_S)\label{eq:lesRS}\end{equation}
and
$$\Trn(\varepsilon_1\varepsilon_1^{\star}P({\bf A}_N, U_N{\bf B}_NU_N^{\star}))=\varphi_N(P_1)\varphi_N(Q_1)\cdots \varphi_N(P_s)\varphi_N(Q_s)+ \sum_{\emptyset \neq S\subset \{1,\dots,s\}}\Trn(\varepsilon_1\varepsilon_1^{\star}R_S).$$
The rest of the proof consists in showing that the terms in the sums tends to $0$ in expectation as $N$ tends to infinity.
Note that, for $S=\{s_1<\ldots<s_\ell\}\neq \emptyset$,
$R_S$ can be written as $$M_N(1)pV_{N-1}^{n_1}p^\star M_N(2)pV_{N-1}^{n_2}p^\star \cdots M_N(\ell)pV_{N-1}^{n_\ell}p^\star M_N(\ell+1)$$
with $n_1\ldots,n_\ell\in \{1,-1\}$. Each $M_N(k)$ either belongs to $$\{P_1({\bf A}_N),Q_1({\bf B}_N),\ldots, P_s({\bf A}_N),Q_s({\bf B}_N)\},$$ either is of rank one because of occurrences of $\varepsilon_1\varepsilon_1^{\star}$ (except possibly $M_N(\ell+1)=I_N$ if $s_\ell=s$).

On one hand, we have
\begin{align*}
    \trn(R_S)&=\frac{1}{N}\Trn\Big(M_N(1)pV_N^{n_1}p^\star M_N(2)pV_N^{n_2}p^\star \cdots M_N(\ell)pV_N^{n_\ell}p^\star M_N(\ell+1)\Big)\\
    &=\frac{1}{N}\Trn\Big( M_N(\ell+1)M_N(1)pV_N^{n_1}p^\star M_N(2)pV_N^{n_2}p^\star \ldots M_N(\ell)pV_N^{n_\ell}p^\star\Big)\\
    &=\frac{1}{N}\Trnm\Big(p^\star M_N(\ell+1)M_N(1)p \cdot V_N^{n_1}\cdot p^\star M_N(2)p\cdot V_N^{n_2} \ldots p^\star M_N(\ell)p\cdot V_N^{n_\ell}\Big).
\end{align*}
Note that the matrices $p^\star M_N(\ell+1)M_N(1)p$, $p^\star M_N(2)p,\ldots,p^\star M_N(\ell)p$ either belong to $$\{ p^\star P_1({\bf A}_N)p,p^\star Q_1({\bf B}_N)p,\ldots, p^\star P_s({\bf A}_N)p,p^\star Q_s({\bf B}_N)p\}$$
 either are of rank one. If $P$ is cyclically alternating, then $P_1({\bf A}_N),Q_1({\bf B}_N),\ldots, P_s({\bf A}_N),Q_s({\bf B}_N)$ have vanishing traces, so the normalized trace of  $p^\star P_1({\bf A}_N)p$, $p^\star Q_1({\bf B}_N)p$,\ldots, $p^\star P_s({\bf A}_N)p$, $p^\star Q_s({\bf B}_N)p$ is $O((N-1)^{-1})$. Indeed, we have for example
 $$\frac{1}{N-1}\Trnm(p^\star P_1({\bf A}_N)p)=-\frac{1}{N-1}\varphi^{v_N}(P_1({\bf A}_N))=O((N-1)^{-1}).$$
As a consequence, Proposition~\ref{prop:freenesswithunitaries} yields \begin{equation}\mathbb{E}[\trn(R_S)]=O((N-1)^{-2})=O(N^{-2}).\label{RStozero}\end{equation} Finally, for any cyclically alternating $P$ and centred with respect to $\psi_N$, and such that the norm of their
coefficients is bounded uniformly in $N$, we have
\begin{equation}\mathbb{E}\left[\trn(P({\bf A}_N, U_N{\bf B}_NU_N^{\star}))\right]=\frac{1}{N}\varphi_N(P_1)\varphi_N(Q_1)\cdots \varphi_N(P_s)\varphi_N(Q_s)+ O(N^{-2}).\label{eq:trace}\end{equation}
Note that we only proved
$$\mathbb{E}[\psi_{{\bf A}_N,U_N{\bf B}_NU_N^{\star}}(P)]=\psi_{{\bf A}_N}*\psi_{{\bf B}_N}(P)+O(N^{-1})$$
for cyclically alternating polynomials. However, the traciality of $\psi_{{\bf A}_N,U_N{\bf B}_NU_N^{\star}}$ and $\psi_{{\bf A}_N}*\psi_{{\bf B}_N}$, and the uniformity of our estimates in the degree and the coefficients, allow to conclude that
$$\mathbb{E}[\psi_{{\bf A}_N,U_N{\bf B}_NU_N^{\star}}(P)]=\psi_{{\bf A}_N}*\psi_{{\bf B}_N}(P)+O(N^{-1})$$
is true for any polynomial $P\in \mathbb{C}\langle X_k,Y_k:~k\in K \rangle$.

On the other hand, we have
\begin{align*}
    \Trn(\varepsilon_1\varepsilon_1^{\star} R_S)&=\Trn\Big(\varepsilon_1\varepsilon_1^{\star} M_N(1)pV_N^{n_1}p^\star M_N(2)pV_N^{n_2}p^\star \cdots M_N(\ell)pV_N^{n_\ell}p^\star M_N(\ell+1)\Big)\\
    &=\Trn\Big( M_N(\ell+1)\varepsilon_1\varepsilon_1^{\star} M_N(1)pV_N^{n_1}p^\star M_N(2)pV_N^{n_2}p^\star \ldots M_N(\ell)pV_N^{n_\ell}p^\star\Big)\\
    &=\Trnm\Big(p^\star M_N(\ell+1)\varepsilon_1\varepsilon_1^{\star} M_N(1)p \cdot V_N^{n_1}\cdot p^\star M_N(2)p\cdot V_N^{n_2} \ldots p^\star M_N(\ell)p\cdot V_N^{n_\ell}\Big).
\end{align*}
Note that the matrices $p^\star M_N(\ell+1)\varepsilon_1\varepsilon_1^{\star} M_N(1)p$, $p^\star M_N(2)p,\ldots,p^\star M_N(\ell)p$ either belong to $$\{ p^\star P_1({\bf A}_N)p,p^\star Q_1({\bf B}_N)p,\ldots, p^\star P_s({\bf A}_N)p,p^\star Q_s({\bf B}_N)p\}$$
 either are of rank one. If $P$ is cyclically alternating, then $P_1({\bf A}_N),Q_1({\bf B}_N),\ldots, P_s({\bf A}_N),Q_s({\bf B}_N)$ have vanishing traces and we can apply Proposition~\ref{prop:freenesswithunitaries} as before. We get $\mathbb{E}[\Trn(\varepsilon_1\varepsilon_1^{\star} R_S)]=O((N-1)^{-1})=O(N^{-1})$. If $Q_s=1$, we can still apply Proposition~\ref{prop:freenesswithunitaries} whenever $M_N(\ell)\neq Q_s$ and we get $\mathbb{E}[\Trn(\varepsilon_1\varepsilon_1^{\star} R_S)]=O((N-1)^{-1})=O(N^{-1})$. In the particular case where $M_N(\ell)= Q_s=1$ has a non-vanishing trace, we have $ M_N(\ell+1)=I_N$ and
$$
   \mathbb{E}\left[ \Trn(\varepsilon_1\varepsilon_1^{\star} R_S)\right]=\Trnm\Big(p^\star \varepsilon_1\varepsilon_1^{\star} M_N(1)p \cdot V_N^{n_1}\cdot p^\star M_N(2)p\cdot V_N^{n_2} \ldots p^\star M_N(\ell)p\cdot V_N^{n_\ell}\Big)=0
$$
because $p^\star \varepsilon_1=0$. Finally, we always get
$$\Trn(\varepsilon_1\varepsilon_1^{\star}P({\bf A}_N, U_N{\bf B}_NU_N^{\star}))=\varphi_N(P_1)\varphi_N(Q_1)\cdots \varphi_N(P_s)\varphi_N(Q_s)+O(N^{-1}).$$

We just proved that, for any polynomial $P\in \mathbb{C}\langle X_k,Y_k,~k\in K\rangle$,
$$\mathbb{E}[\psi_{{\bf A}_N,U_N{\bf B}_NU_N^{\star}}(P)]=\psi_{{\bf A}_N}*\psi_{{\bf B}_N}(P)+O(N^{-1})$$
and
$$\mathbb{E}[\varphi^{v_N}_{{\bf A}_N,U_N{\bf B}_NU_N^{\star}}(P)]=\varphi^{v_N}_{{\bf A}_N}\prescript{}{\psi_{{\bf A}_N}\!\!}{*}^{}_{\psi_{{\bf B}_N}}\varphi^{v_N}_{{\bf B}_N}(P)+O(N^{-1})$$
with uniform estimates on the operator norm of the matrices.
\end{proof}
We now proceed with almost sure estimates.
\begin{theorem}
\label{th:cfreeness_almost_sure}
Let ${\bf A}_N = \{A_N^k: k \in K\}$ and ${\bf B}_N = \{B^k_N:~k \in K\}$ be sequences of families of $N\times N$ matrices ($N\geq 2$) bounded in operator norm uniformly in $N$. Let $(v_N)_{N\geq 2}$ be a sequence of unit vectors of $\mathbb{C}^N$ and $U_{N} \in U(N)\cap \mathrm{Stab}(v_N)$ be a sequence ($N\geq 2$) of uniform random unitary matrix leaving $v_N$ invariant.

As $N$ tends to infinity, for any polynomial $P\in \mathbb{C}\langle X_k,Y_k:~k\in K\rangle$,
$$\psi_{{\bf A}_N,U_N{\bf B}_NU_N^{\star}}(P)=\psi_{{\bf A}_N}*\psi_{{\bf B}_N}(P)+o(1)\ \ \text{almost surely}$$
and
$$\varphi^{v_N}_{{\bf A}_N,U_N{\bf B}_NU_N^{\star}}(P)=\varphi^{v_N}_{{\bf A}_N}\prescript{}{\psi_{{\bf A}_N}\!\!}{*}^{}_{\psi_{{\bf B}_N}}\varphi^{v_N}_{{\bf B}_N}(P)+o(1)\ \ \text{almost surely.}$$
In other words, almost surely, the ensemble ${\bf A}_N$ and $U_N{\bf B}_NU_N^{\star}$ are asymptotically c-free with respect to $(\trn,\varphi^{v_N})$.
\end{theorem}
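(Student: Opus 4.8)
\smallskip
\emph{Proof strategy.} The plan is to deduce the almost sure statement from the in-expectation estimates of Theorem~\ref{th:cfreeness} by means of the concentration inequality of Theorem~\ref{th:concentration_unitary} together with the Borel--Cantelli lemma. Fix $P\in \mathbb{C}\langle X_k,Y_k:k\in K\rangle$. As in the proof of Theorem~\ref{th:cfreeness}, assume $v_N=\varepsilon_1$, so that $U_N=\varepsilon_1\varepsilon_1^\star+pV_{N-1}p^\star$ with $V_{N-1}$ Haar distributed on $U(N-1)$; since $\Tr\big((U_N-U_N')(U_N-U_N')^\star\big)=\Tr\big((V_{N-1}-V_{N-1}')(V_{N-1}-V_{N-1}')^\star\big)$, it suffices to estimate the Lipschitz constants, with respect to the Hilbert--Schmidt metric on $U(N-1)$, of the two functions $V\mapsto \trn\big(P({\bf A}_N,U{\bf B}_NU^\star)\big)$ and $V\mapsto \varphi^{v_N}\big(P({\bf A}_N,U{\bf B}_NU^\star)\big)$, where $U=\varepsilon_1\varepsilon_1^\star+pVp^\star$.

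These Lipschitz estimates are the technical core. Expanding $P({\bf A}_N,U{\bf B}_NU^\star)$ into monomials and telescoping over the occurrences of $U$ and $U^\star$, the difference $P({\bf A}_N,U{\bf B}_NU^\star)-P({\bf A}_N,U'{\bf B}_NU'^\star)$ is a sum of a number of terms, bounded uniformly in $N$, of the form $M_1(U-U')M_2$ with $M_1,M_2$ products of matrices from ${\bf A}_N,{\bf B}_N$ (of operator norm bounded in $N$, these ensembles being uniformly bounded) and of unitaries. For the normalized trace, $|\trn(M_1(U-U')M_2)|\leq \tfrac1N\|M_1\|_\infty\|M_2\|_\infty\|U-U'\|_1\leq C N^{-1/2}\sqrt{\Tr((U-U')(U-U')^\star)}$ using $\|X\|_1\leq\sqrt N\,\|X\|_2$; for the vector state, $|\varphi^{v_N}(M_1(U-U')M_2)|\leq \|M_1\|_\infty\|M_2\|_\infty\|U-U'\|_\infty\leq C\sqrt{\Tr((U-U')(U-U')^\star)}$. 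Applying this to real and imaginary parts, the first function is Lipschitz with constant $O(N^{-1/2})$ and the second with constant $O(1)$, the implied constants depending only on $R$ and $\deg P$. Theorem~\ref{th:concentration_unitary} on $U(N-1)$ then yields, for every $\delta>0$, bounds $\mathbb{P}\big(|\psi_{{\bf A}_N,U_N{\bf B}_NU_N^\star}(P)-\mathbb{E}[\psi_{{\bf A}_N,U_N{\bf B}_NU_N^\star}(P)]|\geq\delta\big)\leq 4e^{-c N^2\delta^2}$ and $\mathbb{P}\big(|\varphi^{v_N}_{{\bf A}_N,U_N{\bf B}_NU_N^\star}(P)-\mathbb{E}[\varphi^{v_N}_{{\bf A}_N,U_N{\bf B}_NU_N^\star}(P)]|\geq\delta\big)\leq 4e^{-c N\delta^2}$ for some $c=c(R,\deg P)>0$.

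Both right-hand sides are summable in $N$, so Borel--Cantelli gives that, almost surely, $\psi_{{\bf A}_N,U_N{\bf B}_NU_N^\star}(P)-\mathbb{E}[\psi_{{\bf A}_N,U_N{\bf B}_NU_N^\star}(P)]\to0$ and likewise for $\varphi^{v_N}$. Combining with Theorem~\ref{th:cfreeness}, which controls these expectations up to $O(N^{-1})$, gives the claimed $o(1)$ convergence almost surely for the fixed polynomial $P$. To produce a single almost sure event on which the convergence holds for all $P$ simultaneously, apply the above to each polynomial of a countable $(\mathbb{Q}+i\mathbb{Q})$-linear basis of $\mathbb{C}\langle X_k,Y_k:k\in K\rangle$ and intersect the resulting countably many full-measure events; since $P\mapsto\psi_{{\bf A}_N,U_N{\bf B}_NU_N^\star}(P)$ is linear and the ensembles are uniformly operator-norm bounded, convergence on the basis extends to all polynomials by continuity in the coefficients on sets of bounded degree. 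The main obstacle is the Lipschitz bound — in particular, retaining the $N^{-1/2}$ gain for the normalized trace, which is what makes the concentration probabilities summable; the rest is routine bookkeeping.
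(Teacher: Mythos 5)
Your proposal is correct and follows essentially the same route as the paper: reduce to concentration of $V_{N-1}$ on $U(N-1)$ around the expectation given by Theorem~\ref{th:cfreeness}, establish Lipschitz bounds for the two functionals via a swapping/telescoping argument, and conclude by Theorem~\ref{th:concentration_unitary} and Borel--Cantelli (the paper packages this as Proposition~\ref{Lipschitz}). The only cosmetic difference is in the Lipschitz estimate for the vector state, where you bound $\|U-U'\|_\infty$ directly to get an $O(1)$ constant while the paper applies the non-commutative H\"older inequality and settles for $O(N^{1/4})$; both yield summable concentration probabilities.
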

\begin{proof}Thanks to Theorem~\ref{th:cfreeness}, we are left to prove that $$\psi_{{\bf A}_N,U_N{\bf B}_NU_N^{\star}}(P)=\mathbb{E}[\psi_{{\bf A}_N,U_N{\bf B}_NU_N^{\star}}(P)]+o(1)\ \ \text{almost surely}$$
and
$$\varphi^{v_N}_{{\bf A}_N,U_N{\bf B}_NU_N^{\star}}(P)=\mathbb{E}[\varphi^{v_N}_{{\bf A}_N,U_N{\bf B}_NU_N^{\star}}(P)]+o(1)\ \ \text{almost surely,}$$
which is a direct consequence of the concentration phenomenon of Proposition~\ref{Lipschitz}.
\end{proof}
The following concentration phenomenon will be used several times, which explains its very general formulation.
\begin{proposition}
\label{Lipschitz}
Let ${\bf M}_N = \{M_N^k: k \in K\}$ be a sequence of families of $N\times N$ matrices ($N\geq 2$) bounded in operator norm uniformly in $N$. Let $(u_N)_{N\geq 2}$ and $(v_N)_{N\geq 2}$ be two sequences of unit vectors of $\mathbb{C}^N$ and $U_{N} \in U(N)\cap \mathrm{Stab}(u_N)$ be a sequence ($N\geq 2$) of uniform random unitary matrix leaving $u_N$ invariant.

Then, for any polynomial $P\in \langle X,X^{-1},Y_k;k\in K\rangle$, we have
$$\psi_{U_N,{\bf M}_N}(P)=\mathbb{E}[\psi_{U_N,{\bf M}_N}(P)]+o(1)\ \ \text{almost surely}$$
and
$$\varphi^{v_N}_{U_N,{\bf M}_N}(P)=\mathbb{E}[\varphi^{v_N}_{U_N,{\bf M}_N}(P)]+o(1)\ \ \text{almost surely.}$$

\end{proposition}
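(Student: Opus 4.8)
The plan is to deduce this from the concentration of Haar measure on unitary groups (Theorem~\ref{th:concentration_unitary}) together with the Borel--Cantelli lemma, exactly the strategy announced right after that theorem. By linearity of $\psi_{U_N,{\bf M}_N}$ and $\varphi^{v_N}_{U_N,{\bf M}_N}$ in $P$, I would first reduce to the case where $P$ is a single word in $X,X^{-1}$ and the $Y_k$'s (the claim being trivial when $P$ involves no $X$); grouping consecutive occurrences of $X$, one may then write, for $U\in U(N)\cap\mathrm{Stab}(u_N)$,
$$P(U,{\bf M}_N)=N_1U^{n_1}N_2U^{n_2}\cdots N_\ell U^{n_\ell}N_{\ell+1},$$
with $n_1,\dots,n_\ell\in\mathbb{Z}\setminus\{0\}$ and each $N_j$ a fixed word in the matrices ${\bf M}_N$, of operator norm bounded by a constant depending only on $P$ and the uniform operator-norm bound $R$ on ${\bf M}_N$.

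Next I would set up the geometry. Identify $U(N)\cap\mathrm{Stab}(u_N)$ with $U(N-1)$ via restriction to $\langle u_N\rangle^\perp$, using the isometric embedding $p:\mathbb{C}^{N-1}\hookrightarrow\mathbb{C}^N$ from the proof of Theorem~\ref{th:cfreeness}, so that $W\mapsto u_Nu_N^\star+pWp^\star$; since $p^\star p=I_{N-1}$, this identification is an isometry for the distance $d(U,V)=\sqrt{\Tr((U-V)(U-V)^\star)}$. Then I would show that the two functions $U\mapsto\trn(P(U,{\bf M}_N))$ and $U\mapsto\langle P(U,{\bf M}_N)v_N,v_N\rangle$ on $U(N)\cap\mathrm{Stab}(u_N)$ are Lipschitz for $d$ with a constant $C=C(P,R)$ \emph{independent of $N$}. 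A telescoping argument, replacing $U$ by $V$ one factor at a time and using $\|U^n-V^n\|_{\mathrm{op}}\leq|n|\,\|U-V\|_{\mathrm{op}}$ (valid because $U,V$ and their inverses are unitary) together with the uniform bounds on $\|N_j\|_{\mathrm{op}}$, gives $\|P(U,{\bf M}_N)-P(V,{\bf M}_N)\|_{\mathrm{op}}\leq C\,\|U-V\|_{\mathrm{op}}\leq C\,d(U,V)$; the Lipschitz bound for the two functions then follows from $|\trn(M)|\leq\|M\|_{\mathrm{op}}$ and $|\langle Mv_N,v_N\rangle|\leq\|M\|_{\mathrm{op}}$.

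Finally I would apply Theorem~\ref{th:concentration_unitary} on $U(N-1)$ (through the isometric identification above) to the real and imaginary parts of each of these functions, each of which is $C$-Lipschitz, and use a union bound to obtain, for every fixed $\delta>0$ and all $N\geq 2$, a tail bound of the form
$$\mathbb{P}\big[\,|\psi_{U_N,{\bf M}_N}(P)-\mathbb{E}[\psi_{U_N,{\bf M}_N}(P)]|\geq\delta\,\big]\leq 4\,e^{-c(N-1)\delta^2}$$
with $c=c(C)>0$, and the analogous bound for $\varphi^{v_N}_{U_N,{\bf M}_N}(P)$. These are summable in $N$, so Borel--Cantelli gives that almost surely the difference is eventually smaller than $\delta$; intersecting over $\delta=1/m$, $m\geq 1$, yields the claimed almost sure $o(1)$ statements. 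I expect the only point requiring genuine care to be the Lipschitz estimate --- specifically, keeping the constant bounded as $N\to\infty$, which works precisely because $\trn$ and the vector state are contractions for the operator norm, and handling the inverses $X^{-1}$ --- while the passage to $U(N-1)$ and the Borel--Cantelli step are routine.
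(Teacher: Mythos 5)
Your proof is correct and follows the same overall strategy as the paper: identify $U(N)\cap\mathrm{Stab}(u_N)$ isometrically with $U(N-1)$, establish a Lipschitz bound for the two functionals with respect to the Hilbert--Schmidt distance, apply the concentration inequality of Theorem~\ref{th:concentration_unitary} to real and imaginary parts, and conclude by Borel--Cantelli. The only genuine difference is in the Lipschitz estimate. After the same telescoping/swapping step, the paper applies the non-commutative H\H{o}lder inequality, assigning exponent $n_i=1$ to the factor $U-V$ (and $n_i=2$ to $v_Nv_N^\star$ in the vector-state case); this yields a $\frac{C}{\sqrt{N}}$-Lipschitz bound for $U\mapsto\trn(P(U,{\bf M}_N))$ and a $CN^{1/4}$-Lipschitz bound for $U\mapsto\varphi^{v_N}_{U,{\bf M}_N}(P)$, hence tails $2e^{-cN(N-1)\delta^2}$ and $2e^{-c(N-1)\delta^2/\sqrt{N}}$ respectively. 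You instead use the cruder chain $|\trn(M)|\leq\|M\|_{\mathrm{op}}$, $|\langle Mv_N,v_N\rangle|\leq\|M\|_{\mathrm{op}}$ and $\|U-V\|_{\mathrm{op}}\leq d(U,V)$, obtaining an $N$-independent Lipschitz constant $C$ for both functionals and tails of order $e^{-c(N-1)\delta^2}$. For the normalized trace this is weaker than the paper's bound (it misses the extra factor of $N$ in the exponent that reflects the true $O(1/N)$ fluctuation scale exploited in Section~\ref{sec:secondorderlimit}), but it is still summable, which is all that is needed for the almost sure $o(1)$ conclusion; for the vector state your bound is in fact slightly cleaner and sharper than the paper's $CN^{1/4}$. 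So both routes are valid, and yours trades a small loss on $\psi$ for a small gain on $\varphi^{v_N}$ while avoiding the H\H{o}lder inequality altogether.
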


\begin{proof}
Without loss of generality, let us assume that $u_N$ is the first vector of the canonical basis of $\mathbb{C}^N$. Then, we have
$$U_N=\begin{pmatrix}
1& 0 \\
0& V_{N-1}\end{pmatrix},$$
where $V_{N-1}$ is uniformly distributed on $U(N-1)$. We want to use the concentration inequality of Theorem~\ref{th:concentration_unitary} for $V_{N-1}$. As the map
$$V\mapsto \begin{pmatrix}
1& 0 \\
0& V\end{pmatrix} $$ is an isometry for the Hilbert-Schmidt norm, it remains to prove that there exists a constant $C>0$ (independent from $N$) such that
$U\mapsto \psi_{U,{\bf M}_N}(P) $ is $\frac{C}{\sqrt{N}}$-Lipschitz
and that
$U\mapsto \varphi^{v_N}_{U,{\bf M}_N}(P) $
is $N^{1/4}C$-Lipschitz for the Hilbert-Schmidt metric $$d(U,V)=\sqrt{\Tr((U-V)(U-V)^\star)}$$ on the unitary group $U(N)$. We follow here the proof of \cite[Theorem 3.5]{cebron2022freeness}. We set $$f(U):=\psi_{U,{\bf M}_N}(P)=\trn(P(U,{\bf M}_N)).$$In order to bound $f(U)-f(V)$, we rewrite $f(U)-f(V)$ as a sum of traces by using the swapping trick in order to replace each occurrence of $U$ by $V$ (swapping
one term at a time make appears alternatively $U-V$ or $U^*-V^*$).

The non-commutative H\H{o}lder inequality says that, for any $M_1,\ldots,M_k\in \mathcal{M}_N(\mathbb{C})$ and any integers $n_1,\ldots,n_k$ such that $\sum_i 1/(2n_i)=1$, we have
$$\Big|\trn(M_1\ldots M_k)\Big|\leq \sqrt[\leftroot{-3}\uproot{3}2n_1]{\trn\Big((M_1M_1^\star)^{n_1}\Big)}\cdots \sqrt[\leftroot{-3}\uproot{3}2n_k]{\trn\Big((M_kM_k^\star)^{n_k}\Big)} $$ (see for example \cite[Theorem 2.1.5]{da2018lecture}). Using the non-commutative H\H{o}lder inequality with exponent $n_i=1$ for the term $(U-V)$, and the fact that matrices from $\mathbf{M}_N$ are bounded in operator norm,
we conclude that there exists $C>0$ such that
$$|f(U)-f(V)|\leq C\sqrt{\trn((U-V)(U-V)^\star)}=\frac{C}{\sqrt{N}}\sqrt{\Tr((U-V)(U-V)^\star)}.$$
Similarly,  we set $$g(U):=\frac{1}{N}\varphi^{v_N}_{U,{\bf M}_N}(P)=\trn(v_Nv_N^*P(U,{\bf M}_N)).$$
We proceed similarly using the exponent $n_i=2$ for the matrix $v_Nv_N^\star$, and obtain
 that there exists $C>0$ such that
$$|g(U)-g(V)|\leq C\sqrt[\leftroot{-3}\uproot{3}4]{\trn(v_Nv_N^\star)}\sqrt{\trn((U-V)(U-V)^\star)}=\frac{C}{N^{3/4}}\sqrt{\Tr((U-V)(U-V)^\star)}.$$
Consequently, $U\mapsto \varphi^{v_N}_{{\bf A}_N,U{\bf B}_NU^{\star}}(P)$ is $N^{1/4}$C-Lipschitz.

As a consequence, Theorem~\ref{th:concentration_unitary} applies for $V_{N-1}$. We get
$$\mathbb{P}\left[\left|\vphantom{\varphi^{v_N}_{U_N,{\bf M}_N}}\mathrm{Re}\ \psi_{U_N{\bf M}_N}(P)-\mathbb{E}[\mathrm{Re}\ \psi_{U_N{\bf M}_N}(P)]\right|\geq \delta\right]\leq 2e^{-\frac{N(N-1)\delta^2}{12C^2}},$$
$$\mathbb{P}\left[\left|\mathrm{Re}\ \varphi^{v_N}_{U_N{\bf M}_N}(P)-\mathbb{E}[\mathrm{Re}\ \varphi^{v_N}_{U_N{\bf M}_N}(P)]\right|\geq \delta\right]\leq 2e^{-\frac{(N-1)\delta^2}{12C^2\sqrt{N}}}$$
and similarly for the imaginary parts. We deduce the wanted almost sure convergences, by Borel-Cantelli.\end{proof}

\subsection{Asymptotic cyclic-conditional freeness}
In this section, we prove that the infinitesimal distribution of the Vortex model obeys to cyclic-conditional freeness.

We consider as usual now two deterministic sequences ${\bf A}_N = \{A_N^k: k \in K\}$ and ${\bf B}_N = \{B^k_N:~k \in K\}$ of $N\times N$ matrices ($N\geq 1$)  uniformly bounded in operator-norm. We assume convergence of the distributions of $\bfA_N$ and $\bfB_N$ and existence of $\frac{1}{N}$-expansions. That is, for any
polynomial $P\in \mathbb{C}\langle X_k:~k\in K\rangle$,
\begin{equation}\label{eq:expansion}
\psi_{{\bf A}_N}(P) = \psi_{\bf A}(P) + \frac{1}{N}\omega_{\bf A}(P) + o(1/N), \ \ \psi_{{\bf B}_N}(P) = \psi_{\bf B}(P) + \frac{1}{N}\omega_{\bf B}(P) + o(1/N)
\end{equation}
where $\psi_{\bf A}$, $\omega_{\bf A}$, $\psi_{\bf B}$ and $\omega_{\bf B}$ are linear functionals on $\mathbb{C}\langle X_k:~k\in K\rangle$.
Notice that since $\trn$ is tracial, $\omega_{\bfA}$ and  $\omega_{\bf B}$ are also tracial and that $\omega_{\bfA}(1)=\omega_{\bfB}(1)=0$.

We suppose existence of a deterministic sequence of unitary vectors $(v_N)_{N\geq 1}$ such that the associated states $\varphi^{v_N}_{\bfA_N}$ and $\varphi^{v_N}_{\bfB_N}$ converge. That is, for any
polynomial $P\in \mathbb{C}\langle X_k:~k\in K\rangle$, \begin{equation}\label{eq:expansiondeux}
\varphi^{v_N}_{{\bf A}_N}(P) = \varphi_{\bf A}(P) +o(1), \ \ \varphi^{v_N}_{{\bf B}_N}(P) = \varphi_{{\bf B}}(P) + o(1).
\end{equation} where $\varphi_{\bf A}$ and $\varphi_{\bf B}$ are linear functionals on $\mathbb{C}\langle X_k:~k\in K\rangle$.

Then, in addition to the result of the previous section the following proposition holds.

\begin{theorem}\label{th:cycliccfreeness}
Let $(v_N)_{N\geq 2}$ be a sequence of unit vectors of $\mathbb{C}^N$. Let ${\bf A}_N = \{A_N^k: k \in K\}$ and ${\bf B}_N = \{B^k_N:~k \in K\}$ be two sequences of $N\times N$ matrices ($N\geq 2$) as described above, i.e.   uniformly bounded in operator-norm and satisfying \eqref{eq:expansion} and \eqref{eq:expansiondeux}. Let $U_{N} \in U(N)\cap \mathrm{Stab}(v_N)$ be a sequence ($N\geq 2$) of uniform random unitary matrix leaving $v_N$ invariant.

Then, for any
polynomial $P\in \mathbb{C}\langle X_k,Y_k:~k\in K\rangle$, we have $$
\mathbb{E}\left[\frac{1}{N}\mathrm{Tr}[P({\bf A}_N,U_N{\bf B}_NU_N^{\star})] \right]= \psi_{\bf A} * \psi_{\bf B}(P)  + \frac{1}{N} \omega_{\bfA} \prescript{\psi_{\bfA}\!\!}{\varphi_{\bfA}\!\!}{}\circledast^{\psi_{\bfB}}_{\varphi_{\bfB}} \omega_{\bfB}(P)+ O(N^{-2}).
$$
Equivalently, for any $(P_1,Q_1,\ldots,P_s,Q_s)$ sequence of polynomial cyclically alternating in $ \mathbb{C}\langle X_k: k\in K\rangle \cup \mathbb{C}\langle Y_k: k\in K\rangle$ and centered with respect to $\psi_{\bf A} * \psi_{\bf B}$, we have
\begin{equation}
	\label{eqn:toprove}
\mathbb{E}\left[	\Trn\Big(P_1(\bfA_N)Q_1(U_N\bfB_NU_N^{\star})\cdots P_s(\bfA_N)Q_s(U_N\bfB_NU_N^{\star})\Big) \right]=
    	 \varphi_{\bfA}(P_1)\varphi_{\bfB}(Q_1)\cdots \varphi_{\bfA}(P_s)\varphi_{\bfB}(Q_s)+ O(N^{-1}).
\end{equation}
\end{theorem}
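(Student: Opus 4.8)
The plan is to mimic the proof of Theorem~\ref{th:cfreeness}, carrying one more order in $N$, and then to identify the leading coefficient with the cyclic c-free product.

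\emph{Reduction to \eqref{eqn:toprove}.} Both sides of the asserted $\tfrac1N$-expansion are linear in $P$ and invariant under cyclic rotation of $P$ --- the left-hand side by traciality of $\mathrm{Tr}_N$, the right-hand side because $\psi_{\mathbf A}*\psi_{\mathbf B}$ and the cyclic c-free product $\omega_{\mathbf A}\circledast\omega_{\mathbf B}$ are tracial. Hence it suffices to check the expansion on $P=1$ (where both sides equal $1$, using $\omega_{\mathbf A}(1)=\omega_{\mathbf B}(1)=0$) and on alternating words whose letters are centered with respect to $\psi_{\mathbf A}$, resp.\ $\psi_{\mathbf B}$. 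An alternating word which is not cyclically alternating is reduced, by cyclically rotating and then subtracting from its wrapped-around letter its value under the \emph{limiting} functional $\psi_{\mathbf A}$ or $\psi_{\mathbf B}$, to a scalar times a strictly shorter word plus a genuinely cyclically alternating centered word; since the very same exact algebraic identity holds for $\mathrm{Tr}_N$ and mirrors the recursion defining $\omega_{\mathbf A}\circledast\omega_{\mathbf B}$, an induction on word length reduces everything to \eqref{eqn:toprove}, Theorem~\ref{th:cfreeness} supplying along the way the $O(1)$-term $\psi_{\mathbf A}*\psi_{\mathbf B}(P)$ up to $O(N^{-1})$.

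\emph{Leading term.} Assume, as in Theorem~\ref{th:cfreeness}, that $v_N=\varepsilon_1$, write $U_N=\varepsilon_1\varepsilon_1^{\star}+pV_{N-1}p^{\star}$ with $p:\mathbb{C}^{N-1}\hookrightarrow\varepsilon_1^{\perp}$ and $V_{N-1}$ Haar on $U(N-1)$, and expand every occurrence of $U_N^{\pm1}$ in $P_1(\mathbf A_N)Q_1(U_N\mathbf B_NU_N^{\star})\cdots P_s(\mathbf A_N)Q_s(U_N\mathbf B_NU_N^{\star})$, obtaining $\sum_S R_S$ over subsets $S$ of the occurrences, where in $R_S$ those in $S$ are replaced by $pV_{N-1}^{\pm1}p^{\star}$ and the others by $\varepsilon_1\varepsilon_1^{\star}$. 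Using $\varepsilon_1\varepsilon_1^{\star}M\varepsilon_1\varepsilon_1^{\star}=\varphi^{v_N}(M)\,\varepsilon_1\varepsilon_1^{\star}$ repeatedly collapses the term $S=\emptyset$, giving
$$\mathrm{Tr}_N(R_\emptyset)=\varphi^{v_N}_{\mathbf A_N}(P_1)\,\varphi^{v_N}_{\mathbf B_N}(Q_1)\cdots\varphi^{v_N}_{\mathbf A_N}(P_s)\,\varphi^{v_N}_{\mathbf B_N}(Q_s)=\varphi_{\mathbf A}(P_1)\varphi_{\mathbf B}(Q_1)\cdots\varphi_{\mathbf A}(P_s)\varphi_{\mathbf B}(Q_s)+o(1)$$
by \eqref{eq:expansiondeux}; this is the claimed main term, which is exactly $(\omega_{\mathbf A}\circledast\omega_{\mathbf B})(P_1Q_1\cdots P_sQ_s)$ by the definition of the product.

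\emph{Error term.} It remains to show $\mathbb{E}[\mathrm{Tr}_N(R_S)]=O(N^{-1})$ for $S\neq\emptyset$. Exactly as in Theorem~\ref{th:cfreeness}, $R_S=M_N(1)pV_{N-1}^{n_1}p^{\star}M_N(2)\cdots M_N(\ell)pV_{N-1}^{n_\ell}p^{\star}M_N(\ell+1)$ with $n_i\in\{-1,1\}$ and each $M_N(k)$ a product of the $P_i(\mathbf A_N),Q_i(\mathbf B_N)$ and of factors $\varepsilon_1\varepsilon_1^{\star}$, and cyclically rearranging the trace yields
$$\mathrm{Tr}_N(R_S)=\mathrm{Tr}_{N-1}\!\big(\widetilde M(1)V_{N-1}^{n_1}\widetilde M(2)V_{N-1}^{n_2}\cdots\widetilde M(\ell)V_{N-1}^{n_\ell}\big),\quad \widetilde M(1)=p^{\star}M_N(\ell+1)M_N(1)p,\ \ \widetilde M(k)=p^{\star}M_N(k)p\ (k\ge2).$$
The crucial point is that every $\widetilde M(k)$ has normalized trace of order $1/N$: if $M_N(k)$ contains a factor $\varepsilon_1\varepsilon_1^{\star}$ then it has rank one and $\mathrm{tr}_{N-1}(\widetilde M(k))=O(1/N)$ trivially; otherwise $M_N(k)$ is a single polynomial $P_i(\mathbf A_N)$ or $Q_i(\mathbf B_N)$ from the centered family, whence $\mathrm{tr}_N(M_N(k))=O(1/N)$ by \eqref{eq:expansion} (its limit being $0$) and therefore $\mathrm{tr}_{N-1}(p^{\star}M_N(k)p)=\tfrac1{N-1}\big(\mathrm{Tr}_N M_N(k)-\varphi^{v_N}(M_N(k))\big)=O(1/N)$ as well; the wrapped-around block $M_N(\ell+1)M_N(1)$ is rank one unless the first and last occurrences of $U_N^{\pm1}$ both lie in $S$, in which case it equals the single centered polynomial $P_1(\mathbf A_N)$ and the same estimate applies. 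Since the $M_N(k)$ are bounded in operator norm uniformly in $N$, Proposition~\ref{prop:freenesswithunitaries} (applied to $V_{N-1}$, Haar on $U(N-1)$, with exponents $n_i\neq0$) gives $\mathbb{E}[\mathrm{tr}_{N-1}(\widetilde M(1)V_{N-1}^{n_1}\cdots\widetilde M(\ell)V_{N-1}^{n_\ell})]=O(N^{-2})$, hence $\mathbb{E}[\mathrm{Tr}_N(R_S)]=(N-1)\,O(N^{-2})=O(N^{-1})$. Summing over the finitely many $S$ gives \eqref{eqn:toprove}.

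The main obstacle is the error estimate: obtaining $O(N^{-1})$, rather than merely $o(1)$, is what makes the $\tfrac1N$-expansion meaningful, and it requires verifying the almost-centering hypothesis $\mathrm{tr}_{N-1}(\widetilde M(k))=O(1/N)$ of Proposition~\ref{prop:freenesswithunitaries} for \emph{every} block, including the wrapped-around one --- this is where the centering hypothesis on the word is used, and it differs slightly from Theorem~\ref{th:cfreeness} in that here the words are centered with respect to the \emph{limiting} functionals, so the relevant traces are $O(1/N)$ rather than exactly $0$, still small enough thanks to \eqref{eq:expansion}. (Note that the fluctuations of $\mathrm{Tr}_N(R_S)$ are of order $1$, so this argument only controls the expectation, consistent with the statement.)
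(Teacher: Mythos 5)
Your proposal is correct and follows essentially the same route as the paper: the paper's (sketched) proof simply reruns the argument of Theorem~\ref{th:cfreeness} to land on \eqref{eq:trace}, multiplies by $N$, and invokes the convergence of $\varphi^{v_N}_{{\bf A}_N}$ and $\varphi^{v_N}_{{\bf B}_N}$, exactly as you do with the $R_S$-expansion, Proposition~\ref{prop:freenesswithunitaries}, and the traciality reduction to cyclically alternating centered words. You are right, and more explicit than the paper, on the one point where the argument genuinely deviates from Theorem~\ref{th:cfreeness}: the words are now centered with respect to the limiting functionals, so the blocks are only almost centered ($\mathrm{tr}_N=O(1/N)$ via \eqref{eq:expansion}), which is precisely the hypothesis Proposition~\ref{prop:freenesswithunitaries} was designed to accommodate.
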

\begin{remark} 
\begin{enumerate}
    \item Whenever the limits
    $$
    \lim_{N\to \infty} \mathrm{Tr}_N(P({\bf A}_N)), \quad \lim_{N\to \infty}\mathrm{Tr}_N(P({\bf B}_N))
    $$
    exist for any polynomial $P$, and thus $\lim_{N}\trn(P({\bf A}_N))=\lim_{N}\trn(P({\bf B}_N))=0$, one recovers cyclic-Boolean independence,
  which translates concretely to the following equality for cyclically alternating polynomials:
    \begin{equation*}
        \lim_{N\to \infty}\mathbb{E}\left[\Trn(P_1({\bf A}_N) Q_1(U_N{\bf B}_NU_N^\star)\cdots)\right]= \prod_i\lim_{N\to \infty}\langle v_N, P_i({\bf A}_N)v_N\rangle\prod_i\lim_{N\to \infty}\langle v_N, Q_i({\bf B}_N)v_N\rangle .
    \end{equation*}
    \item Whenever the limits
    \begin{equation*}
        \lim_{N\to \infty}\Trn(P({\bf A}_N)), \quad \lim_{N\to \infty}\trn(P({\bf B}_N))= \lim_{N\to \infty}\langle v_N, P({\bf B}_N) v_N\rangle
    \end{equation*}
    exist for any polynomial $P$, one recovers cyclic-monotone independence,  which translates concretely to the following equality for cyclically alternating polynomials:
    \begin{equation*}
    \lim_{N\to \infty}\mathbb{E}\left[\mathrm{Tr}_N(P_1({\bf A}_N)Q_1(U_N {\bf B}_NU_N^{\star}) \cdots) \right]=  \lim_{N\to \infty}\mathrm{Tr}_N(P_1({\bf A}_N)P_2({\bf A}_N)\cdots )\prod_{i} \lim_{N\to \infty}\trn(Q_i({\bf B}_N)).
    \end{equation*}
\end{enumerate}
\end{remark}

\begin{proof}(sketch of the proof) Let us prove the result for a product of polynomials cyclically alternating  and centered with respect to $\psi_{\bf A} * \psi_{\bf B}$.
 Consider the case  where $$
P=P_1(X)Q_1(Y)\cdots P_s(X)Q_s(Y)
$$
for polynomials $P_1,\ldots,P_s\in \mathbb{C}\langle X_k: k\in K\rangle$ and $Q_1,\ldots,Q_s\in \mathbb{C}\langle X_k: k\in K\rangle$ which are centered with respect to $\psi_{\bf A} * \psi_{\bf B}$. We unfold exactly the same lines of arguments as in the proof of Theorem \ref{th:cfreeness}, and obtain \eqref{eq:trace}, that is
\begin{equation*}\mathbb{E}\left[\trn(P({\bf A}_N, U_N{\bf B}_NU_N^{\star}))\right]=\frac{1}{N}\varphi^{v_N}_{{\bf A}_N}(P_1)\varphi^{v_N}_{{\bf B}_N}(Q_1)\cdots \varphi^{v_N}_{{\bf A}_N}(P_s)\varphi^{v_N}_{{\bf B}_N}(Q_s)+ O(N^{-2}).\end{equation*}
Because of the convergence of $\varphi^{v_N}_{{\bf A}_N}$ and $\varphi^{v_N}_{{\bf B}_N}$, we obtain the wanted convergence \eqref{eqn:toprove}.

The general result for $P\in \mathbb{C}\langle X_k,Y_k:~k\in K\rangle$ follows by decomposing (thanks to traciality) any trace of polynomials as a sum of traces of  products of polynomials which are cyclically alternating  and centered with respect to $\psi_{\bf A} * \psi_{\bf B}$.\end{proof}

We now proceed with almost sure estimates. If $U_N$ is a unitary in $U(N)\cap \mathrm{Stab}(v_n)$, we let $U_N^{\perp}$ be its compression on
the orthogonal complement of $\langle v_N\rangle$, that is $$U_N^{\perp}:=(I_N-v_Nv_N^{\star})U_N(I_N-v_Nv_N^{\star})=U_N-v_Nv_N^\star.$$
\begin{corollaire}
\label{cor:fluctuat}
Let $k\geq 2$ an integer. Let $(P_1,Q_1,\ldots,P_s,Q_s)$ be a cyclically alternated sequence of polynomials centered  with respect to $\psi_{\bfA}*\psi_{\bfB}$. Under the hypothesis and notations of Theorem~\ref{th:cycliccfreeness}, we have almost surely
\begin{multline*}
    	\Trn\Big(P_1(\bfA_N)Q_1(U_N\bfB_NU_N^{\star})\cdots P_s(\bfA_N)Q_s(U_N\bfB_NU_N^{\star})\Big) =
    	 \varphi_{\bfA}(P_1)\varphi_{\bfB}(Q_1)\cdots \varphi_{\bfA}(P_s)\varphi_{\bfB}(Q_s)
    \\	+ \Trn\Big(P_1(\bfA_N)U_N^{\perp}Q_1(\bfB_N)U_N^{\perp}{}^{\star}\cdots P_s(\bfA_N)U_N^{\perp}Q_s(\bfB_N)U_N^{\perp}{}^{\star}\Big) + o(1).
\end{multline*}

\end{corollaire}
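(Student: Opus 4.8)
The plan is to insert the identity $U_N=v_Nv_N^{\star}+U_N^{\perp}$ into each of the $2s$ factors $U_N,U_N^{\star}$ occurring in
$$
X_N:=\Trn\Big(P_1(\bfA_N)U_NQ_1(\bfB_N)U_N^{\star}\cdots P_s(\bfA_N)U_NQ_s(\bfB_N)U_N^{\star}\Big)
$$
(note $Q_i(U_N\bfB_NU_N^{\star})=U_NQ_i(\bfB_N)U_N^{\star}$ since $U_N$ is unitary) and to identify which terms of the resulting expansion survive. Expanding yields $X_N=\sum_{S}\Trn(R_S)$, where $S$ runs over subsets of the $2s$ unitary slots, and in $R_S$ the slots of $S$ carry $U_N^{\perp}$ (resp. $U_N^{\perp}{}^{\star}$) while the remaining slots carry the rank-one projection $v_Nv_N^{\star}$. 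The full subset reproduces exactly the residual term in the statement, and the plan is to show that $R_{\emptyset}$ converges to $\prod_i\varphi_{\bfA}(P_i)\varphi_{\bfB}(Q_i)$ and that every mixed subset contributes $o(1)$ almost surely.

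For $S=\emptyset$, the rank-one projections cut the trace completely: applying $v_Nv_N^{\star}Wv_Nv_N^{\star}=\Trn(v_Nv_N^{\star}W)\,v_Nv_N^{\star}$ repeatedly gives $R_{\emptyset}=\prod_{i=1}^{s}\varphi^{v_N}_{\bfA_N}(P_i)\,\varphi^{v_N}_{\bfB_N}(Q_i)$, which by \eqref{eq:expansiondeux} equals $\prod_i\varphi_{\bfA}(P_i)\varphi_{\bfB}(Q_i)+o(1)$. For a mixed $S$ (neither empty nor full) at least one slot still carries $v_Nv_N^{\star}$, so the same cut produces a finite product $\Trn(R_S)=\prod_j\Trn(v_Nv_N^{\star}W_j)$ in which each sub-word $W_j$ begins and ends with one of the $P_i(\bfA_N),Q_i(\bfB_N)$ and has interior letters alternating between the $P_i(\bfA_N),Q_i(\bfB_N)$ and some $U_N^{\perp}$'s; since $S$ is mixed, at least one $W_j$ contains a factor $U_N^{\perp}$ or $U_N^{\perp}{}^{\star}$. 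Each factor $\Trn(v_Nv_N^{\star}W_j)$ is bounded uniformly in $N$ because $\|U_N^{\perp}\|\le 1$ and the coefficients of $P_i,Q_i$ are fixed, so it suffices to prove that whenever $W_j$ contains a $U_N^{\perp}$ one has $\Trn(v_Nv_N^{\star}W_j)\to 0$ almost surely; as there are finitely many $S$, this gives $\sum_{\text{mixed }S}\Trn(R_S)=o(1)$ almost surely and the identity follows.

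For this last fact I would combine concentration with an expectation bound computed in dimension $N-1$, exactly as in the proof of Theorem~\ref{th:cfreeness}. Since $U_N^{\perp}=U_N-v_Nv_N^{\star}$ is a polynomial in $U_N$ and the deterministic matrix $v_Nv_N^{\star}$, we can write $\Trn(v_Nv_N^{\star}W_j)=\varphi^{v_N}_{U_N,{\bf M}_N}(P)$ for a fixed $P\in\mathbb{C}\langle X,X^{-1},Y_k:k\in K\rangle$ and the uniformly norm-bounded family ${\bf M}_N=(\bfA_N,\bfB_N,v_Nv_N^{\star})$, whence Proposition~\ref{Lipschitz} gives $\Trn(v_Nv_N^{\star}W_j)=\mathbb{E}[\Trn(v_Nv_N^{\star}W_j)]+o(1)$ almost surely. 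For the expectation, assume $v_N=\varepsilon_1$ and write $U_N^{\perp}=pV_{N-1}p^{\star}$ with $V_{N-1}$ Haar on $U(N-1)$ and $p$ the $N\times(N-1)$ matrix of the proof of Theorem~\ref{th:cfreeness}; conjugating the cyclic word $v_Nv_N^{\star}W_j$ by $p$ turns $\Trn(v_Nv_N^{\star}W_j)$ into $\Trnm\big(\tilde M_0V_{N-1}^{n_1}\tilde M_1\cdots\tilde M_{\ell-1}V_{N-1}^{n_\ell}\big)$ with $n_1,\dots,n_\ell\in\{1,-1\}$ and $\ell\ge 1$, where $\tilde M_0$ is rank one and each other $\tilde M_k$ is $p^{\star}P_i(\bfA_N)p$ or $p^{\star}Q_i(\bfB_N)p$; all of these satisfy $\trnm(\tilde M_k)=O(1/N)$ (for $\tilde M_0$ because it is rank one and uniformly bounded, for the others because the $P_i,Q_i$ are $\psi_{\bfA}$- resp. $\psi_{\bfB}$-centered and satisfy \eqref{eq:expansion}). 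Proposition~\ref{prop:freenesswithunitaries} applied in dimension $N-1$ then gives $\mathbb{E}[\Trn(v_Nv_N^{\star}W_j)]=(N-1)\cdot O((N-1)^{-2})=O(N^{-1})$, hence $\Trn(v_Nv_N^{\star}W_j)=o(1)$ almost surely.

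The delicate step is this expectation estimate: one must check that after passing to $V_{N-1}$ at least one genuine power $V_{N-1}^{\pm1}$ survives, so that Proposition~\ref{prop:freenesswithunitaries} applies and yields decay $O(N^{-1})$ instead of a surviving $O(1)$ contribution --- which is guaranteed precisely because $W_j$ was chosen to contain a $U_N^{\perp}$. It is worth stressing the contrast with Theorem~\ref{th:cfreeness_almost_sure}: quantities of the form $\varphi^{v_N}$ do concentrate at order $o(1)$, whereas the un-normalized traces $\Trn$ of cyclically alternating words fluctuate at order $1$ with an asymptotically Gaussian limit, which is exactly why the residual term $\Trn(P_1(\bfA_N)U_N^{\perp}Q_1(\bfB_N)U_N^{\perp}{}^{\star}\cdots)$ cannot be replaced by its mean.
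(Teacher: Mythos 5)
Your proof is correct and follows essentially the same route as the paper: expand each $U_N$ as $v_Nv_N^{\star}+U_N^{\perp}$, identify the empty and full subsets with the two displayed terms, and kill the mixed terms $R_S$ by combining the expectation bound from Proposition~\ref{prop:freenesswithunitaries} (after passing to $V_{N-1}$ and checking the $O(1/N)$ normalized traces via \eqref{eq:expansion}) with the concentration of Proposition~\ref{Lipschitz}. The only difference is cosmetic: you factor each mixed $\Trn(R_S)$ into a product of vector states $\Trn(v_Nv_N^{\star}W_j)$ before applying concentration, whereas the paper writes $\Trn(R_S)=\varphi^{v_N}_{U_N,\bfA_N,\bfB_N,v_Nv_N^{\star}}(T)$ directly and cites its earlier estimate \eqref{RStozero}.
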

\begin{proof} 
Here again, we unfold exactly the same lines of arguments as in the proof of Theorem \ref{th:cfreeness}, obtaining \eqref{eq:lesRS}, which can be written \begin{multline*}
    	\Trn\Big(P_1(\bfA_N)Q_1(U_N\bfB_NU_N^{\star})\cdots P_s(\bfA_N)Q_s(U_N\bfB_NU_N^{\star})\Big) =
    	 \varphi_{\bfA}(P_1)\varphi_{\bfB}(Q_1)\cdots \varphi_{\bfA}(P_s)\varphi_{\bfB}(Q_s)
    \\	+ \Trn\Big(P_1(\bfA_N)U_N^{\perp}Q_1(\bfB_N)U_N^{\perp}{}^{\star}\cdots P_s(\bfA_N)U_N^{\perp}Q_s(\bfB_N)U_N^{\perp}{}^{\star}\Big)
+ \sum_{\emptyset \neq S\subsetneq \{1,\dots,s\}}\Trn(R_S).\end{multline*}
Thanks to \eqref{RStozero}, we already know that $\mathbb{E}[\Trn(R_S)]$ converges to $0$. It remains to prove that $\Trn(R_S)-\mathbb{E}[\Trn(R_S)]$ converges to $0$ almost surely whenever $\emptyset \neq S\subsetneq \{1,\dots,s\}$.

Because $S\neq \{1,\dots,s\}$, each terms $R_S$ contains at least one factor $v_Nv_N^{\star}$, and we can rewrite
$\Trn(R_S)$ as $\Trn(R_S)=\varphi^{v_N}_{U_N,\bfA_N,\bfB_N,v_Nv_N^{\star}}(T)$
for a certain polynomial $T$ in non-commuting variables. Proposition~\ref{Lipschitz} yields the almost sure convergence of $\Trn(R_S)-\mathbb{E}[\Trn(R_S)]$ to $0$.\end{proof}

\subsection{Second-order limit distribution}\label{sec:secondorderlimit}
In this section, we show that the random variable on the right-hand side of the equality in Corollary \ref{cor:fluctuat} is asymptotically Gaussian by using the theory of second-order free probability, presented in section~\ref{sec:fluctutations}.

Under the hypothesis and notations of Theorem~\ref{th:cycliccfreeness}, $U_N$ can be written $v_Nv_N^\star+pV_{N-1}p^\star$ for a certain deterministic $N\times (N-1)$ matrix $p$ and a Haar unitary matrix $V_{N-1}$, as in the proof of Theorem \ref{th:cfreeness}. In particular, we have $U_N^\perp=U_N-v_Nv_N^\star=pV_{N-1}p^\star$. 
Let $k\geq 2$ an integer and $(P_1,Q_1,\ldots,P_s,Q_s)$ be a cyclically alternated sequence of polynomials centered  with respect to $\psi_{\bfA}*\psi_{\bfB}$. We have
\begin{align*}
     &\Trn\Big(P_1(\bfA_N)U_N^{\perp}Q_1(\bfB_N)U_N^{\perp}{}^{\star}\cdots P_s(\bfA_N)U_N^{\perp}Q_s(\bfB_N)U_N^{\perp}{}^{\star}\Big) 
     \\
     =& \Trn\Big(P_1(\bfA_N)pV_{N-1}p^\star Q_1(\bfB_N)pV_{N-1}^\star p^\star\cdots P_s(\bfA_N)pV_{N-1}p^\star Q_s(\bfB_N)pV_{N-1}^\star p^\star\Big)\\
     =&\Trnm\Big(p^\star P_1(\bfA_N)pV_{N-1}p^\star Q_1(\bfB_N)pV_{N-1}^\star p^\star\cdots P_s(\bfA_N)pV_{N-1}p^\star Q_s(\bfB_N)pV_{N-1}^\star \Big).
\end{align*}
Note that under the hypothesis of Theorem~\ref{th:cycliccfreeness}, the matrices $(p^\star P_i(\bfA_N)p)_{1\leq i \leq s}$ and $(p^\star Q_i(\bfB_N)p)_{1\leq i \leq s}$ have each one a second order distribution. Using Theorem~\ref{thm:asymptoticsecondrderfreeness}, we know that $(p^\star P_i(\bfA_N)p)_{1\leq i \leq s}$ and $(V_{N-1}p^\star Q_i(\bfB_N)pV_{N-1}^\star)_{1\leq i \leq s}$ are asymptotically free of second order. In particular, $$\Trn\Big(P_1(\bfA_N)U_N^{\perp}Q_1(\bfB_N)U_N^{\perp}{}^{\star}\cdots P_s(\bfA_N)U_N^{\perp}Q_s(\bfB_N)U_N^{\perp}{}^{\star}\Big) $$ is asymptotically a centered Gaussian variable with explicit variance. Using Corollary~\ref{cor:fluctuat}, we obtain the following.

For any $P^{(1)},\ldots,P^{(r)} \in \mathbb{C}\langle X_k,Y_k:k\in K \rangle$, each one being a product of cyclically alternated elements which are centered with respect to $\psi_{\bfA}*\psi_{\bfB}$, we have the convergence in moments of
\begin{equation*}
    \Big(\mathrm{Tr}_N(P^{(1)}(\bfA_N,U_N\bfB_NU_N^{\star})),\ldots,\mathrm{Tr}_N(P^{(r)}(\bfA_N,U_N\bfB_NU_N^{\star}))\Big)
\end{equation*}
to a Gaussian vector $(Z_1,\ldots,Z_r)$. Setting $P^{(i)} = P^{(i)}_1\cdots P^{(i)}_{\ell(i)}$ the decomposition of $P^{(i)}$ into cyclically alternated and centered factors (with $P^{(i)}_1\in \mathbb{C}\langle X_k:k\in K \rangle$), we have the mean given by $$\mathbb{E}[Z_i]=\varphi_{\bfA}(P_1^{(i)})\varphi_{\bfB}(P_2^{(i)})\cdots \varphi_{\bfB}(P^{(i)}_{\ell(i)})$$ and the covariance matrix $\gamma$ given by
\begin{equation*}
\gamma(i,j) =\delta_{\ell(i),\ell(j)} \sum_{i=1}^{\ell(i)} \psi_{\bfA}* \psi_{\bfB}(P^{(i)}_1P_{1+k}^{(j)})\cdots\psi_{\bfA}*\psi_{\bfB}(P^{(i)}_rP_{r+k}^{(j)}).
\end{equation*}
Finally, by linearity, we deduce the fluctuations of any $\psi_{\bfA_N,U_N\bfB_NU_N^\star}(P)$ in the following proposition (see Section~\ref{sec:fluctutations} for the definition of $(\psi_{\bfA}*\psi_{\bfB})^{(2)}$).

\begin{proposition}\label{prop:gaussian}
Let $(v_N)_{N\geq 2}$ be a sequence of unit vectors of $\mathbb{C}^N$. Let ${\bf A}_N = \{A_N^k: k \in K\}$ and ${\bf B}_N = \{B^k_N:~k \in K\}$ be two sequences of $N\times N$ matrices ($N\geq 2$) as in Theorem~\ref{th:cycliccfreeness}. Let $U_{N} \in U(N)\cap \mathrm{Stab}(v_N)$ be a sequence ($N\geq 2$) of uniform random unitary matrix leaving $v_N$ invariant.

Then, the family of random variables
$$N\Big(\psi_{\bfA_N,U_N\bfB_NU_N^\star}(P)-\psi_{\bf A} * \psi_{\bf B}(P)\Big)_{P\in \mathbb{C}\langle X_k,Y_k:~k\in K\rangle}$$
converges in moments to a Gaussian family of random variables $(Z_P)_{P\in \mathbb{C}\langle X_k,Y_k:~k\in K\rangle}$ with mean
$$\mathbb{E}[Z_P]=\omega_{\bfA} \prescript{\psi_{\bfA}\!\!}{\varphi_{\bfA}\!\!}{}\circledast^{\psi_{\bfB}}_{\varphi_{\bfB}}\omega_{\bfB} (P) $$
and covariance
$$ k_2(Z_P,Z_Q)=(\psi_{\bfA}*\psi_{\bfB})^{(2)}(P,Q).$$
\end{proposition}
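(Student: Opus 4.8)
The plan is to combine Corollary~\ref{cor:fluctuat} with the second-order asymptotic freeness of Theorem~\ref{thm:asymptoticsecondrderfreeness} and then read off the limiting mean and covariance. Since $P\mapsto N\big(\psi_{\bfA_N,U_N\bfB_NU_N^\star}(P)-\psi_{\bf A}*\psi_{\bf B}(P)\big)$ is linear in $P$, and since convergence in moments of a family passes to linear combinations, it suffices to treat $P$ in a spanning set; and because the four functionals $\psi_{\bfA_N,U_N\bfB_NU_N^\star}$, $\psi_{\bf A}*\psi_{\bf B}$, $\omega_{\bfA}\circledast\omega_{\bfB}$ (tracial by construction) and $(\psi_{\bf A}*\psi_{\bf B})^{(2)}$ (tracial in each argument) are tracial, I would reduce, by cyclic invariance of the trace and a recursive $\psi_{\bf A}*\psi_{\bf B}$-centering (induction on the degree), to three kinds of $P$: a monomial in the $X_k$ only, a monomial in the $Y_k$ only, and a product $P=P_1(X)Q_1(Y)\cdots P_s(X)Q_s(Y)$ cyclically alternating and centered with respect to $\psi_{\bf A}*\psi_{\bf B}$. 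The limiting mean and covariance being respectively linear and bilinear, this reduction is harmless.

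For the first two kinds, $\psi_{\bfA_N,U_N\bfB_NU_N^\star}(P)$ equals $\psi_{\bfA_N}(P)$, resp.\ $\psi_{\bfB_N}(P)$ by conjugation-invariance of $\trn$, so by \eqref{eq:expansion} the rescaled quantity has the deterministic limit $\omega_{\bf A}(P)$, resp.\ $\omega_{\bf B}(P)$; these are degenerate Gaussians that match $\omega_{\bfA}\circledast\omega_{\bfB}(P)$ (the cyclic c-free product restricts to $\omega_{\bf A}$, resp.\ $\omega_{\bf B}$, on the two subalgebras) and contribute nothing to the covariance, consistently with $(\psi_{\bf A}*\psi_{\bf B})^{(2)}(P,\cdot)=0$ when $P$ involves a single family of variables (the corresponding trace being deterministic). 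For a cyclically alternating centered $P$ one has $\psi_{\bf A}*\psi_{\bf B}(P)=0$, hence $N\psi_{\bfA_N,U_N\bfB_NU_N^\star}(P)=\Trn(P(\bfA_N,U_N\bfB_NU_N^\star))$, which Corollary~\ref{cor:fluctuat} expresses almost surely as the constant $\varphi_{\bf A}(P_1)\varphi_{\bf B}(Q_1)\cdots\varphi_{\bf A}(P_s)\varphi_{\bf B}(Q_s)$, plus the fluctuation $\Trn\big(P_1(\bfA_N)U_N^\perp Q_1(\bfB_N)U_N^{\perp\,\star}\cdots P_s(\bfA_N)U_N^\perp Q_s(\bfB_N)U_N^{\perp\,\star}\big)$, plus $o(1)$. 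The constant equals $\omega_{\bfA}\circledast\omega_{\bfB}(P)$: this is exactly relation~\eqref{eqn:cdnw} defining cyclic c-freeness for the triple $\big(\psi_{\bf A}*\psi_{\bf B},\,\varphi_{\bf A}\prescript{}{\psi_{\bf A}}{*}^{}_{\psi_{\bf B}}\varphi_{\bf B},\,\omega_{\bfA}\circledast\omega_{\bfB}\big)$, using that the c-free product restricts to $\varphi_{\bf A}$, resp.\ $\varphi_{\bf B}$, on $\mathcal{A}_1$, resp.\ $\mathcal{A}_2$.

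It then remains to show the fluctuation term is asymptotically a centered Gaussian with covariance $(\psi_{\bf A}*\psi_{\bf B})^{(2)}$. Writing $U_N^\perp=pV_{N-1}p^\star$ with $V_{N-1}$ Haar distributed on $U(N-1)$ and using cyclicity of the trace, it becomes $\Trnm\big(C^N_1\,V_{N-1}D^N_1V_{N-1}^\star\,C^N_2\,V_{N-1}D^N_2V_{N-1}^\star\cdots\big)$ with $C^N_i=p^\star P_i(\bfA_N)p$ and $D^N_i=p^\star Q_i(\bfB_N)p$ deterministic and uniformly bounded in operator norm. Expanding $pp^\star=I_N-v_Nv_N^\star$, the value of $\trnm$ on a word in the $C^N_i$ equals $\tfrac{N}{N-1}$ times the value of $\trn$ on the corresponding word in the $P_i(\bfA_N)$, up to $v_Nv_N^\star$-insertions of order $O(1/N)$; by \eqref{eq:expansion} such words converge, so the families $\{C^N_i\}$ and $\{D^N_i\}$ have second-order limit distributions with trivial second-order functionals (being deterministic) whose first orders agree with $\psi_{\bf A}$, resp.\ $\psi_{\bf B}$, on the corresponding words. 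In particular each $C^N_i$ is asymptotically centered, since $\psi_{\bf A}(P_i)=0$, so that word is cyclically alternating and centered with respect to the free product of the two limit distributions. Theorem~\ref{thm:asymptoticsecondrderfreeness}, applied to $\{C^N_i\}$, $\{D^N_i\}$ and $V_{N-1}$, then yields joint convergence in moments of such traces to a centered Gaussian family whose covariance is the second-order free product of the two limit distributions evaluated on the corresponding words — equal, by the moment matching just noted, to $(\psi_{\bf A}*\psi_{\bf B})^{(2)}(P,Q)$ — while all cumulants of order $\geq 3$ vanish.

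Putting the three cases together, $N\big(\psi_{\bfA_N,U_N\bfB_NU_N^\star}(P)-\psi_{\bf A}*\psi_{\bf B}(P)\big)$ converges in moments, for each kind of $P$ and hence for all $P\in\mathbb{C}\langle X_k,Y_k:k\in K\rangle$, to a constant $\omega_{\bfA}\circledast\omega_{\bfB}(P)$ plus a centered Gaussian family with covariance $(\psi_{\bf A}*\psi_{\bf B})^{(2)}$; since a constant plus a Gaussian vector is Gaussian, this is the assertion. The main obstacle is the fluctuation step: establishing that the compressions $p^\star P_i(\bfA_N)p$ indeed carry a second-order limit distribution, and, above all, that their mixed moments with the Haar unitaries $V_{N-1}$ reproduce the $\psi_{\bf A}*\psi_{\bf B}$ and $(\psi_{\bf A}*\psi_{\bf B})^{(2)}$ data up to $O(1/N)$ — that is, controlling uniformly the corrections coming from $pp^\star=I_N-v_Nv_N^\star$. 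This is precisely where the \emph{existence} of the $\tfrac1N$-expansion \eqref{eq:expansion}, and not merely the convergence of $\psi_{\bfA_N}$ and $\psi_{\bfB_N}$, is used. A secondary but necessary point is the bookkeeping that identifies the constant of Corollary~\ref{cor:fluctuat} with $\omega_{\bfA}\circledast\omega_{\bfB}(P)$ through the defining relation of the cyclic c-free product, and the observation that these additive constants genuinely converge rather than merely staying bounded.
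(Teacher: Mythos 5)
Your proposal is correct and follows essentially the same route as the paper: reduce to cyclically alternating $\psi_{\bf A}*\psi_{\bf B}$-centered words, use Corollary~\ref{cor:fluctuat} to split the unnormalized trace into the deterministic cyclic c-free constant plus the $U_N^\perp$-fluctuation, and then apply second-order asymptotic freeness (Theorem~\ref{thm:asymptoticsecondrderfreeness}) to the compressions $p^\star P_i(\bfA_N)p$, $p^\star Q_i(\bfB_N)p$ and the Haar unitary $V_{N-1}$ to identify the Gaussian fluctuation with covariance $(\psi_{\bf A}*\psi_{\bf B})^{(2)}$. Your write-up is in fact somewhat more explicit than the paper's on the linearity reduction and on controlling the $v_Nv_N^\star$-corrections in the compressed second-order limit distributions.
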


\subsection{Asymptotic ordered freeness}\label{sec:matrixmodelordered}


Again, we consider two deterministic sequences ${\bf A}_N = \{A_N^k, k \in K\}$ and ${\bf B}_N = \{B^k_N,~k \in K\}$ of $N\times N$ matrices ($N>1$)  uniformly bounded in operator-norm.
We pick for each integer $N>1$, a pair $(u_N,v_N)$ of orthonormal vectors. Let $U_N\in U(N)\cap \mathrm{Stab}(u_N)$ be a uniform unitary matrix leaving invariant the vectors $u_N$. We set for all $N>1$,
$${\bf A}_N^{u} :=U_N{\bf A}_N U_N^\star.$$
Note that we have
$$
    \varphi^{u_N}_{{\bf A}_N^{u}}=\varphi^{u_N}_{{\bf A}_N},\ \   \text{but}\ \ \varphi^{v_N}_{{\bf A}_N^{u}}\neq\varphi^{v_N}_{{\bf A}_N}.
$$
Indeed, the last linear functional $\varphi^{v_N}_{{\bf A}_N^{u}}$ is random. In fact, the behavior of $\varphi^{v_N}_{{\bf A}_N^{u}}$ is almost surely given by $\psi_{{\bf A}_N}$ as the following proposition shows.

\begin{proposition}\label{prop:macromicro}
For any $P\in \mathbb{C}\langle X_k: k\in K\rangle$, as $N\to \infty$, we have almost surely
$$\varphi^{v_N}_{{\bf A}_N^{u}}(P)=\psi_{{\bf A}_N}(P)+o(1).$$
\end{proposition}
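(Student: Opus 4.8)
The plan is to split the statement into an almost-sure concentration part, which is already packaged in Proposition~\ref{Lipschitz}, and a computation of the expectation, which follows from a short symmetry (Schur-type) argument.

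First I would note that, since $U_N^\star U_N=I_N$, one has $P({\bf A}_N^{u})=P(U_N{\bf A}_NU_N^\star)=U_NP({\bf A}_N)U_N^\star$, so that
$$
\varphi^{v_N}_{{\bf A}_N^{u}}(P)=\langle U_NP({\bf A}_N)U_N^\star v_N,v_N\rangle=\varphi^{v_N}_{U_N,{\bf A}_N}(Q),\qquad Q:=X\,P\,X^{-1}\in\mathbb{C}\langle X,X^{-1},Y_k:k\in K\rangle,
$$
where $P$ is regarded here as an element of $\mathbb{C}\langle Y_k:k\in K\rangle$. Since $U_N\in U(N)\cap\mathrm{Stab}(u_N)$, Proposition~\ref{Lipschitz} (applied with invariant vector $u_N$, state vector $v_N$ and matrices ${\bf M}_N={\bf A}_N$) gives
$$
\varphi^{v_N}_{{\bf A}_N^{u}}(P)=\mathbb{E}\big[\varphi^{v_N}_{{\bf A}_N^{u}}(P)\big]+o(1)\qquad\text{almost surely}.
$$

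It then remains to show $\mathbb{E}\big[\varphi^{v_N}_{{\bf A}_N^{u}}(P)\big]=\psi_{{\bf A}_N}(P)+O(1/N)$. Writing $\varphi^{v_N}_{{\bf A}_N^{u}}(P)=\Trn\big(P({\bf A}_N)\,U_N^\star v_Nv_N^\star U_N\big)$, I would compute $E_N:=\mathbb{E}[U_N^\star v_Nv_N^\star U_N]$. By right-invariance of the Haar measure on $U(N)\cap\mathrm{Stab}(u_N)$, the substitution $U_N\mapsto U_Ng^\star$ shows that $gE_Ng^\star=E_N$ for every $g$ in this group; since the group acts trivially on $\langle u_N\rangle$ and as the full unitary group on the $(N-1)$-dimensional space $\langle u_N\rangle^\perp$, the operator $E_N$ must be of the form $\alpha\,u_Nu_N^\star+\beta\,(I_N-u_Nu_N^\star)$. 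Testing against $u_N$ and using $U_Nu_N=u_N$ gives $\alpha=|\langle v_N,u_N\rangle|^2=0$ because $v_N\perp u_N$, while taking the trace gives $(N-1)\beta=\|v_N\|^2=1$; hence $E_N=\frac1{N-1}(I_N-u_Nu_N^\star)$. Therefore
$$
\mathbb{E}\big[\varphi^{v_N}_{{\bf A}_N^{u}}(P)\big]=\frac1{N-1}\Big(\Trn P({\bf A}_N)-\varphi^{u_N}_{{\bf A}_N}(P)\Big)=\frac{N}{N-1}\,\psi_{{\bf A}_N}(P)-\frac1{N-1}\,\varphi^{u_N}_{{\bf A}_N}(P).
$$
Since $P$ is fixed and $({\bf A}_N)$ is bounded in operator norm, both $\psi_{{\bf A}_N}(P)$ and $\varphi^{u_N}_{{\bf A}_N}(P)$ are $O(1)$, so the right-hand side is $\psi_{{\bf A}_N}(P)+O(1/N)$; combining with the concentration estimate yields the claim.

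There is no serious obstacle: the argument is a routine combination of Haar concentration (already available via Proposition~\ref{Lipschitz}) and a one-line symmetry computation of a first moment. The only points deserving a little care are checking that $P({\bf A}_N^{u})$ is genuinely a Laurent polynomial in $U_N$ and ${\bf A}_N$ so that Proposition~\ref{Lipschitz} applies verbatim, and the bookkeeping that both error contributions — from replacing the compressed trace $\frac1{N-1}\Trn(P({\bf A}_N)(I_N-u_Nu_N^\star))$ by $\trn(P({\bf A}_N))$, and from the almost-sure fluctuation around the mean — are $o(1)$.
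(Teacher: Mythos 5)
Your proof is correct and follows essentially the same route as the paper: almost-sure concentration via Proposition~\ref{Lipschitz}, plus an exact computation of the expectation using the invariance of the law of $U_N$ under the stabilizer of $u_N$. Your Schur-lemma evaluation of $\mathbb{E}[U_N^\star v_Nv_N^\star U_N]=\tfrac{1}{N-1}(I_N-u_Nu_N^\star)$ is just a repackaging of the paper's step of summing $\mathbb{E}[\varphi^{v(i)}_{{\bf A}_N^u}]$ over an orthonormal basis of $\langle u_N\rangle^{\perp}$; both yield the identical identity $\mathbb{E}[\varphi^{v_N}_{{\bf A}_N^{u}}(P)]=\tfrac{1}{N-1}\bigl(N\psi_{{\bf A}_N}(P)-\varphi^{u_N}_{{\bf A}_N}(P)\bigr)$.
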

\begin{proof}
In expectation, one has
$\mathbb{E}[\varphi^{v_N}_{{\bf A}_N^{u}}]=\psi_{{\bf A}_N}+O\left(N^{-1}\right)$.
Indeed, the value of $\mathbb{E}[\varphi^{v_N}_{{\bf A}_N^{u}}]$ does not depend of the choice of $v_N$ among the vectors orthogonal to $u_N$ (due to the invariance of $V_N$). In particular, choosing any orthonormal basis $(u_N,v(1),\ldots,v(N-1)),$
we can write
\begin{align*}
   N \psi_{{\bf A}_N}&=\mathbb{E}[N\psi_{{\bf A}_N^{u}}]\\
    &=\mathbb{E}\left[\varphi^{u_N}_{{\bf A}_N^{u}}+\sum_i \varphi^{v(i)}_{{\bf A}_N^{u}}\right]\\
    &=\varphi^{u_N}_{{\bf A}_N}+(N-1) \mathbb{E}[\varphi^{v_N}_{{\bf A}_N^{u}}],
\end{align*}
which implies that $\mathbb{E}[\varphi^{v_N}_{{\bf A}_N^{u}}]=\psi_{{\bf A}_N}+O\left(N^{-1}\right)$.
In order to conclude, it remains to prove that, almost surely,
$$\varphi^{v_N}_{{\bf A}_N^{u}}(P)=\mathbb{E}[\varphi^{v_N}_{{\bf A}_N^{u}}(P)]+o\left(1\right).$$
This almost sure behavior is a consequence of Proposition~\ref{Lipschitz}.\end{proof}


Now, let $V_N\in U(N)\cap \mathrm{Stab}(v_N)$ be a uniform unitary matrix leaving invariant the vector $v_N$ and set
\begin{equation}
\label{eqn:recall}
{\bf B}_N^{v}:=V_N{\bf B}_N V_N^\star.
\end{equation}
Similarly to ${\bf A}_N^{u}$, we have almost surely
$$
\varphi^{u_N}_{{\bf B}_N^{v}}=\psi_{{\bf B}_N}+o(1),\ \ \text{and}\ \ \     \varphi^{v_N}_{{\bf B}_N^{v}}=\varphi^{v_N}_{{\bf B}_N}.
$$
This means that $u_N$ is asymptotically isotropic for ${\bf B}_N^{v}$, while $v_N$ is asymptotically isotropic for ${\bf A}_N^{u}$.
\begin{theorem}
\label{thm:odinde}
With the notations introduced so far, as $N\to \infty$, we have almost surely
\begin{align*}
    &\varphi^{x_N}_{{\bf A}_N^{u},{\bf B}_N^{v}}(P)=\varphi^{x_N}_{{\bf A}_N^{u}}\prescript{}{\varphi^{v_N}_{{\bf A}_N^{u}}\!\!}{*}^{}_{\varphi^{u_N}_{{\bf B}_N^{v}}}\varphi^{x_N}_{{\bf B}_N^{v}}(P)+o(1), \ \ \text{for any}\ \ x_N\in\{u_N,v_N\}.
\end{align*}
In other words, almost surely, the ensembles $\bfA_N^{u,v}$ and $\bfB_N^{u,w}$ are asymptotically $o$-free with respect to $(\varphi^{u_N},\varphi^{v_N})$.
\end{theorem}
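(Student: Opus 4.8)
The plan is to derive ordered freeness from two separate applications of the conditional-freeness result Theorem~\ref{th:cfreeness}, one ``based at $v_N$'' and one ``based at $u_N$'', exploiting that $U_N$ and $V_N$ are independent: in each case one conditions on one of the two random unitaries so that the other randomization falls exactly into the scope of the Vortex model. Fix $P\in\mathbb{C}\langle X_k,Y_k:k\in K\rangle$; a single full-measure event can be made to work simultaneously for a countable spanning family of polynomials, so it suffices to prove the estimate for this $P$. I would first gather the needed elementary facts: $\psi_{\mathbf{A}_N^{u}}=\psi_{\mathbf{A}_N}$, $\psi_{\mathbf{B}_N^{v}}=\psi_{\mathbf{B}_N}$ (conjugation-invariance of the trace), $\varphi^{u_N}_{\mathbf{A}_N^{u}}=\varphi^{u_N}_{\mathbf{A}_N}$, $\varphi^{v_N}_{\mathbf{B}_N^{v}}=\varphi^{v_N}_{\mathbf{B}_N}$ (since $U_N u_N=u_N$ and $V_N v_N=v_N$), together with the two almost-sure limits
\[
\varphi^{v_N}_{\mathbf{A}_N^{u}}=\psi_{\mathbf{A}_N}+o(1),\qquad\varphi^{u_N}_{\mathbf{B}_N^{v}}=\psi_{\mathbf{B}_N}+o(1)
\]
of Proposition~\ref{prop:macromicro} and its analogue stated just before the theorem. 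I would also record the soft but essential observation that, for $P$ of bounded degree and for functionals bounded uniformly on polynomials of bounded degree --- the present situation, all functionals being of the form $\Trn(\,\cdot\,Q(\text{matrices of norm }\le R))$ --- the number $(\varphi_1\prescript{}{\psi_1}{*}^{}_{\psi_2}\varphi_2)(P)$ is a fixed polynomial in finitely many moments of $\varphi_1,\psi_1,\varphi_2,\psi_2$, hence a locally Lipschitz function of them; consequently in any c-free product appearing below one may replace $\varphi^{v_N}_{\mathbf{A}_N^{u}}$ by $\psi_{\mathbf{A}_N}$ and $\varphi^{u_N}_{\mathbf{B}_N^{v}}$ by $\psi_{\mathbf{B}_N}$ at the cost of an $o(1)$, almost surely.

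For $x_N=v_N$: condition on $U_N$, so that $\mathbf{A}_N^{u}$ becomes a deterministic ensemble, still of operator norm $\le R$, while $\mathbf{B}_N^{v}=V_N\mathbf{B}_NV_N^{\star}$ with $V_N$ uniform in $U(N)\cap\mathrm{Stab}(v_N)$ --- precisely the Vortex model of Theorem~\ref{th:cfreeness} with $\mathbf{A}_N^{u}$ playing the role of the fixed family. That theorem yields, with an $O(N^{-1})$ uniform over the realization of $U_N$,
\[
\mathbb{E}\big[\,\varphi^{v_N}_{\mathbf{A}_N^{u},\mathbf{B}_N^{v}}(P)\,\big|\,U_N\,\big]=\varphi^{v_N}_{\mathbf{A}_N^{u}}\prescript{}{\psi_{\mathbf{A}_N}}{*}^{}_{\psi_{\mathbf{B}_N}}\varphi^{v_N}_{\mathbf{B}_N}(P)+O(N^{-1}),
\]
and Proposition~\ref{Lipschitz}, applied conditionally on $U_N$ to the deterministic family $(\mathbf{A}_N^{u},\mathbf{B}_N)$ (its Lipschitz constant depending only on $P$ and $R$), upgrades this --- after Fubini over the joint law of $(U_N,V_N)$ --- to the almost-sure identity $\varphi^{v_N}_{\mathbf{A}_N^{u},\mathbf{B}_N^{v}}(P)=\varphi^{v_N}_{\mathbf{A}_N^{u}}\prescript{}{\psi_{\mathbf{A}_N}}{*}^{}_{\psi_{\mathbf{B}_N}}\varphi^{v_N}_{\mathbf{B}_N}(P)+o(1)$. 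Invoking the continuity observation together with $\varphi^{v_N}_{\mathbf{A}_N^{u}}=\psi_{\mathbf{A}_N}+o(1)$, $\varphi^{u_N}_{\mathbf{B}_N^{v}}=\psi_{\mathbf{B}_N}+o(1)$ and $\varphi^{v_N}_{\mathbf{B}_N^{v}}=\varphi^{v_N}_{\mathbf{B}_N}$ rewrites the right-hand side as $\varphi^{v_N}_{\mathbf{A}_N^{u}}\prescript{}{\varphi^{v_N}_{\mathbf{A}_N^{u}}}{*}^{}_{\varphi^{u_N}_{\mathbf{B}_N^{v}}}\varphi^{v_N}_{\mathbf{B}_N^{v}}(P)+o(1)$, which is exactly the second component of the ordered product $(\varphi^{u_N}_{\mathbf{A}_N^{u}},\varphi^{v_N}_{\mathbf{A}_N^{u}})\leftthreetimes(\varphi^{u_N}_{\mathbf{B}_N^{v}},\varphi^{v_N}_{\mathbf{B}_N^{v}})$.

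The case $x_N=u_N$ is mirror-symmetric: condition on $V_N$ so that $\mathbf{B}_N^{v}$ is deterministic and $\mathbf{A}_N^{u}=U_N\mathbf{A}_NU_N^{\star}$ with $U_N$ uniform in $U(N)\cap\mathrm{Stab}(u_N)$; apply Theorem~\ref{th:cfreeness} with $\mathbf{B}_N^{v}$ in the role of the fixed family and $\mathbf{A}_N$ in that of the randomized one, then use the symmetry of conditional freeness to present the resulting c-free product as $\varphi^{u_N}_{\mathbf{A}_N}\prescript{}{\psi_{\mathbf{A}_N}}{*}^{}_{\psi_{\mathbf{B}_N}}\varphi^{u_N}_{\mathbf{B}_N^{v}}(P)$; pass to an almost-sure statement via Proposition~\ref{Lipschitz} and Fubini as above; and finally substitute $\varphi^{u_N}_{\mathbf{A}_N}=\varphi^{u_N}_{\mathbf{A}_N^{u}}$, $\psi_{\mathbf{A}_N}=\varphi^{v_N}_{\mathbf{A}_N^{u}}+o(1)$, $\psi_{\mathbf{B}_N}=\varphi^{u_N}_{\mathbf{B}_N^{v}}+o(1)$ to reach $\varphi^{u_N}_{\mathbf{A}_N^{u}}\prescript{}{\varphi^{v_N}_{\mathbf{A}_N^{u}}}{*}^{}_{\varphi^{u_N}_{\mathbf{B}_N^{v}}}\varphi^{u_N}_{\mathbf{B}_N^{v}}(P)+o(1)$, the first component of the same ordered product. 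The part I expect to require the most care is not any single computation but the uniformity bookkeeping: one must check that the $O(N^{-1})$ error in Theorem~\ref{th:cfreeness} and the Lipschitz constant in Proposition~\ref{Lipschitz} are controlled purely in terms of $P$ and the operator-norm bound $R$, uniformly over the conditioned unitary, so that the conditional estimates transfer to genuine almost-sure statements under the joint law of $(U_N,V_N)$; the c-free product manipulations are then routine.
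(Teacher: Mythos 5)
Your proposal is correct and follows essentially the same route as the paper: condition on one of the two independent stabilizing unitaries so that the other falls into the scope of Theorem~\ref{th:cfreeness} (using the uniformity of its $O(N^{-1})$ error over operator-norm-bounded families), upgrade to an almost-sure statement via the concentration of Proposition~\ref{Lipschitz}, and then substitute $\varphi^{v_N}_{\mathbf{A}_N^{u}}=\psi_{\mathbf{A}_N}+o(1)$ and $\varphi^{u_N}_{\mathbf{B}_N^{v}}=\psi_{\mathbf{B}_N}+o(1)$ from Proposition~\ref{prop:macromicro} into the c-free product. Your explicit remark that the c-free product depends continuously (polynomially) on finitely many moments, which licenses that last substitution, is a detail the paper leaves implicit but uses in the same way.
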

\begin{proof}We apply Theorem~\ref{th:cfreeness}; we condition on the value of the random matrix $V_N$, since our estimates are uniform in the ensembles $\bfA_N$ and $\bfB_N$ in a ball for the operator norm of fixed radius (with the notation of Theorem \ref{th:cfreeness}), we get
\begin{align*}
    &\mathbb{E}[\varphi^{u_N}_{{\bf A}_N^{u},{\bf B}_N^{v}}(P)|V_N]=\varphi^{u_N}_{{\bf A}_N^{u}}\prescript{}{\psi_{{\bf A}_N}\!\!}{*}^{}_{\psi_{{\bf B}^v_N}}\varphi^{u_N}_{{\bf B}_N^{v}}(P)+O(N^{-1}).
\end{align*}
It remains to prove that, almost surely,
$$\varphi^{u_N}_{{\bf A}_N^{u},{\bf B}_N^{v}}(P)=\mathbb{E}[\varphi^{u_N}_{{\bf A}_N^{u},{\bf B}_N^{v}}(P)|V_N]+o\left(1\right).$$
It is a consequence of the concentration inequality of
Theorem~\ref{th:concentration_unitary} for $U_N$ since $$U(N)\cap \textrm{Stab}(u_N) \ni U\mapsto \varphi^{u_N}_{U{\bf A}_NU^{\star},{\bf B}_N^{v}}(P)$$is a Lipschitz function (we refer to the proof of Theorem~\ref{th:cfreeness_almost_sure} for the arguments). We get almost surely
$$\varphi^{u_N}_{{\bf A}_N^{u},{\bf B}_N^{v}}(P)=\varphi^{u_N}_{{\bf A}_N^{u}}\prescript{}{\psi_{{\bf A}_N}\!\!}{*}^{}_{\psi_{{\bf B}_N}}\varphi^{u_N}_{{\bf B}_N^{v}}(P)+o(1).$$
Equivalently
$$\varphi^{u_N}_{{\bf A}_N^{u},{\bf B}_N^{v}}(P)=\varphi^{u_N}_{{\bf A}_N^{u}}\prescript{}{\varphi^{v_N}_{{\bf A}_N^{u}}\!\!}{*}^{}_{\varphi^{u_N}_{{\bf B}_N^{v}}}\varphi^{u_N}_{{\bf B}_N^{v}}(P)+o(1)$$
because $\varphi^{v_N}_{{\bf A}_N^{u}}(P)=\psi_{{\bf A}_N}(P)+o(1)$ and $\varphi^{u_N}_{{\bf B}_N^{v}}(P)=\psi_{{\bf B}_N}(P)+o(1)$ thanks to Proposition~\ref{prop:macromicro}.

We reverse the role of $U_N$ and $V_N$ in the previous reasoning in order to get almost surely
$$\varphi^{v_N}_{{\bf A}_N^{u},{\bf B}_N^{v}}(P)=\varphi^{v_N}_{{\bf A}_N^{u}}\prescript{}{\varphi^{v_N}_{{\bf A}_N^{u}}\!\!}{*}^{}_{\varphi^{u_N}_{{\bf B}_N^{v}}}\varphi^{v_N}_{{\bf B}_N^{v}}(P)+o(1).$$
\end{proof}
\begin{remark}
We can leverage this geometric interpretation of the ordered product between pairs of functional to prove associativity of the ordered product. This essentially follows from Theorem \ref{thm:odinde} and the fact that for any random Haar unitary matrix $U_N$ leaving invariant a vector $u_N$ and another random  Haar random unitary matrix $V_N$ leaving invariant a vector $v_N$, independent from $U_N$, one has, almost surely:
\begin{equation*}
    \varphi^{x_N}((\bfA_N^u)^v)=\varphi^{x_N}((\bfA_N^{v})^u)
\end{equation*}
for any sequences of ensembles of deterministic matrices $\bfA_N$ bounded uniformly in $N$.
\end{remark}
\subsection{Asymptotic indented independences}
\label{sec:matrixmodelindented}
In this section, we call a \emph{triad} the data of a triple of unitary vectors $(u,v,w)$ which are  mutually orthogonal.


We consider two deterministic sequences ${\bf A}_N = \{A_N^k, k \in K\}$ and ${\bf B}_N = \{B^k_N,~k \in K\}$ of $N\times N$ matrices ($N\geq 3$)  uniformly bounded in operator-norm.
We pick for each integer $N\geq 3$, a triad $(u_N,v_N, w_N)$. Let $V_N\in U(N)\cap \mathrm{Stab}(u_N)\cap \mathrm{Stab}b(v_N)$ be a uniform unitary matrix leaving invariant the vectors $u_N$ and $v_N$ and $W_N\in U(N)\cap \mathrm{Stab}(u_N)\cap \mathrm{Stab}(w_N)$ be a uniform unitary matrix leaving invariant the vectors $u_N$ and $w_N$. We set for all $N\geq 3$,
$${\bf A}_N^{u,v} :=V_N{\bf A}_N V_N^\star\ \ \text{and}\ \ {\bf B}_N^{u,w}:=W_N{\bf B}_N W_N^\star.$$
Similarly to the previous section (see Proposition~\ref{prop:macromicro}), we have almost surely
$$
    \varphi^{u_N}_{{\bf A}_N^{u,v}}=\varphi^{u_N}_{{\bf A}_N},\ \
\varphi^{v_N}_{{\bf A}_N^{u,v}}=\varphi^{v_N}_{{\bf A}_N},\ \ \text{and}\ \ \    \varphi^{w_N}_{{\bf A}_N^{u,v}}=\psi_{{\bf B}_N}+o(1).
$$
and
$$
    \varphi^{u_N}_{{\bf B}_N^{u,w}}=\varphi^{u_N}_{{\bf B}_N},\ \
\varphi^{v_N}_{{\bf B}_N^{u,w}}=\psi_{{\bf B}_N}+o(1),\ \ \text{and}\ \ \     \varphi^{w_N}_{{\bf B}_N^{u,w}}=\varphi^{w_N}_{{\bf B}_N}.
$$
This means that $v_N$ is asymptotically isotropic for ${\bf B}_N^{u,w}$, while $w_N$ is asymptotically isotropic for ${\bf A}_N^{u,v}$.


\begin{theorem}
With the notations introduced so far, as $N\to \infty$, we have almost surely
\begin{align*}
    &\varphi^{x_N}_{{\bf A}_N^{u,v},{\bf B}_N^{u,w}}(P)=\varphi^{x_N}_{{\bf A}_N^{u,v}}\prescript{}{\varphi^{w_N}_{{\bf A}_N^{u,v}}\!\!}{*}^{}_{\varphi^{v_N}_{{\bf B}_N^{u,w}}}\varphi^{x_N}_{{\bf B}_N^{u,w}}(P)+o(1), \ \ \text{for any}\ \ x_N\in\{u_N,v_N,w_N\}.
\end{align*}
In other words, almost surely, the ensembles $\bfA_N^{u,v}$ and $\bfB_N^{u,w}$ are  asymptotically indented independent.
\end{theorem}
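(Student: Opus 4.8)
The plan is to run, with two independent random rotations, the kind of explicit block expansion used in the proof of Theorem~\ref{th:cfreeness}, in the spirit of the proof of Theorem~\ref{thm:odinde}. After a global deterministic rotation (absorbed into $\bfA_N,\bfB_N$) we may assume $u_N=\varepsilon_1$, $v_N=\varepsilon_2$, $w_N=\varepsilon_3$, so that $V_N=\varepsilon_1\varepsilon_1^\star+\varepsilon_2\varepsilon_2^\star+p\widetilde V p^\star$ and $W_N=\varepsilon_1\varepsilon_1^\star+\varepsilon_3\varepsilon_3^\star+r\widetilde W r^\star$ with $\widetilde V,\widetilde W$ independent Haar unitaries of size $N-2$ and $p$ (resp.\ $r$) the isometry onto $\mathrm{span}(\varepsilon_3,\dots,\varepsilon_N)$ (resp.\ $\mathrm{span}(\varepsilon_2,\varepsilon_4,\dots,\varepsilon_N)$). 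As in those two proofs, using traciality of $\trn$, linearity, continuity of the ($c$-)free products occurring in the indented product in their arguments, and the isotropy identifications $\varphi^{w_N}_{\bfA_N^{u,v}}=\psi_{\bfA_N}+o(1)$, $\varphi^{v_N}_{\bfB_N^{u,w}}=\psi_{\bfB_N}+o(1)$ (established as in Proposition~\ref{prop:macromicro}), it suffices to evaluate $\varphi^{\varepsilon_k}_{\bfA_N^{u,v},\bfB_N^{u,w}}$, $k\in\{1,2,3\}$, on an alternating word $m_1\cdots m_n$ whose letters $P_j(\bfA_N^{u,v})$ and $Q_j(\bfB_N^{u,w})$ are $\trn$-centred (that is $\Trn P_j(\bfA_N)=\Trn Q_j(\bfB_N)=0$), and to match the result against the value prescribed to this word by the relevant component ($\varphi$, $\psi$ or $\theta$) of the indented product.

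The main step is the following split. Expand every factor $V_N^{\pm}$, $W_N^{\pm}$ in $\langle\varepsilon_k|\,m_1\cdots m_n\,|\varepsilon_k\rangle$ into its three summands and separate the finite sum into the \emph{diagonal} part, in which each unitary is replaced by a rank-one projection, and the rest. In a diagonal term the projection sitting at an $\bfA$-boundary is $\varepsilon_1\varepsilon_1^\star$ or $\varepsilon_2\varepsilon_2^\star$, at a $\bfB$-boundary $\varepsilon_1\varepsilon_1^\star$ or $\varepsilon_3\varepsilon_3^\star$, and chaining the matrix elements shows that the two projections abutting an internal junction must agree, hence equal $\varepsilon_1\varepsilon_1^\star$, which leaves only controlled choices at the two extremities; reading off the surviving diagonal contribution gives, for each $k$, essentially $\prod_j\varphi^{\varepsilon_k}_\cdot(m_j)$ up to extremity corrections, and one checks this coincides with the value the $\varphi$-, $\psi$- or $\theta$-component of the indented product attaches to the word. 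For the remaining terms, which contain at least one ``big'' block $p\widetilde V^{\pm}p^\star$ or $r\widetilde W^{\pm}r^\star$, condition on $\widetilde W$ (so $W_NQ_j(\bfB_N)W_N^\star$ is deterministic of the same operator norm), cycle and compress by $p$: the term becomes $\Trnm$ of an alternating product in $\widetilde V^{\pm}$ whose interposed matrices all have normalized trace $O(N^{-1})$ — each is either a single compressed centred letter $p^\star P_j(\bfA_N)p$ or $p^\star W_NQ_j(\bfB_N)W_N^\star p$, whose trace equals $-\langle\varepsilon_1|\cdot|\varepsilon_1\rangle-\langle\varepsilon_2|\cdot|\varepsilon_2\rangle=O(1)$ since $\Trn P_j(\bfA_N)=\Trn Q_j(\bfB_N)=0$, or a product containing a rank-one projection (one of the chosen $\varepsilon_i\varepsilon_i^\star$'s, or the $\varepsilon_k\varepsilon_k^\star$ coming from the vector state) — so Proposition~\ref{prop:freenesswithunitaries} gives $O(N^{-1})$ in conditional expectation, uniformly in $\widetilde W$ (a term in which a big block abuts the bra simply vanishes because $\varepsilon_1^\star p=\varepsilon_2^\star p=0$). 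Integrating in $\widetilde W$ and summing the $O(1)$ many terms yields $\mathbb{E}[\varphi^{\varepsilon_k}_{\bfA_N^{u,v},\bfB_N^{u,w}}(m_1\cdots m_n)]=(\text{diagonal part})+O(N^{-1})$.

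It remains to upgrade to almost sure convergence: since $(\widetilde V,\widetilde W)\mapsto\varphi^{\varepsilon_k}_{\bfA_N^{u,v},\bfB_N^{u,w}}(P)$ is $O(N^{1/4})$-Lipschitz for the Hilbert–Schmidt metric on $U(N-2)\times U(N-2)$, the concentration of Haar measure (Theorem~\ref{th:concentration_unitary}, exactly as in Proposition~\ref{Lipschitz}) together with Borel–Cantelli gives $\varphi^{\varepsilon_k}_{\bfA_N^{u,v},\bfB_N^{u,w}}(P)=\mathbb{E}[\varphi^{\varepsilon_k}_{\bfA_N^{u,v},\bfB_N^{u,w}}(P)]+o(1)$ almost surely, and then reassembling an arbitrary $P\in\mathbb{C}\langle X_k,Y_k:k\in K\rangle$ from centred alternating words and powers gives the three displayed identities, i.e.\ asymptotic indented independence. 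I expect the genuinely delicate point to be the combinatorial bookkeeping in the diagonal/remainder split — verifying, term by term and separately for each of $u_N$, $v_N$, $w_N$, that the surviving contribution is precisely what the corresponding leg of the indented product predicts, in particular that the mixed matrix elements $\langle x_N|\cdots|y_N\rangle$ with $x_N\neq y_N$ that appear in the expansion are either killed by the junction constraint or swallowed by a negligible remainder through the independence of $\widetilde V,\widetilde W$ and the mutual orthogonality of the triad. This is where the two-rotation, three-vector situation is substantially more involved than the single-rotation ordered-freeness argument.
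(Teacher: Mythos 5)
Your proposal follows essentially the same route as the paper: expand the two conditioned unitaries into rank-one projections plus a large Haar block, identify the purely diagonal contribution with the product of vector states, kill the remainder via Proposition~\ref{prop:freenesswithunitaries}, and finish with concentration (Theorem~\ref{th:concentration_unitary}) and the isotropy identifications of Proposition~\ref{prop:macromicro}. The only point to patch is the remainder step: a term may contain big blocks coming only from $W_N$ and no $\widetilde V$ at all (for instance when every junction $V_N^\star W_N$ contributes $v_Nv_N^\star$ followed by a $\widetilde W$-block), so one must apply Proposition~\ref{prop:freenesswithunitaries} to whichever of the two independent Haar unitaries actually occurs --- which is how the paper argues, invoking the stated extension of that proposition to families of independent Haar unitaries rather than conditioning on one of them.
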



\begin{proof}
Following the proof of Theorem~\ref{th:cfreeness}, we get below the asymptotic $c$-freeness in expectation with respect to $(\varphi^{x_N},\trn)$ with $x_N\in\{u_N,v_N,w_N\}$.
\begin{proposition}
As $N\to \infty$, we have
\begin{align*}
    &\mathbb{E}\left[\varphi^{x_N}_{{\bf A}_N^{u,v},{\bf B}_N^{u,w}}(P)\right]=\mathbb{E}\left[\varphi^{x_N}_{{\bf A}_N^{u,v}}\prescript{}{\psi_{{\bf A}_N}\!\!}{*}^{}_{\psi_{{\bf B}_N}}\varphi^{x_N}_{{\bf B}_N^{u,w}}(P)\right]+O(N^{-1}), \ \ \text{for any}\ \ x_N\in\{u_N,v_N,w_N\}.
\end{align*}
\end{proposition}
\begin{proof}[Sketch of the proof of the proposition]To ease notations, we write $\bfA_N^u$ and $\bfB_N^u$ instead of $\bfA_N^{u,v}$ and $\bfB_N^{v,w}$. We only prove the first item, the two remaining ones are proved in a similar fashion. We prove that for any polynomial $P\in\mathbb{C}\langle X_i,Y_i,~i\in I\rangle$, written as
$$
P=P_1(X)Q_1(Y)\cdots Q_s(X)P_{s}(Y) \quad (\textrm{or } P=P_1(X)\cdots Q_s(Y))
$$
with each of the $P_i$ and $Q_j$ centered with respect to $\psi_{{\bf A}_N}* \psi_{{\bf B}_N}$,
we have
$$
\mathbb{E}\left[\varphi^{u_N}(P({\bfA}^u_N,{\bfB}^u_N))\right] =\mathbb{E}\left[ \varphi^{u_N}(P_1(\bfA^u_N)) \cdots \varphi^{u_N}(Q_p(\bfB^u_N)) \right]+ O(N^{-1})
$$

$$
(\textrm{ or } =\varphi^{u_N}(P_1(\bfA^u_N)) \cdots \varphi^{u_N}(P_p(\bfA^u_N)) + O(N^{-1})).
$$
To that end, we write
$$
V_N = u_Nu_N^{\star} + v_Nv^{\star}_N + p^\star U_{N-2}p,\quad W_N = u_Nu_N^{\star} + w_Nw_N^{\star} + q^\star \tilde{U}_{N-2}q
$$
where $p$ (resp. $q$) is the projector onto $\langle u_N,v_N\rangle^{\perp}$ corestricted to its image (resp. the projector onto $\langle u_N,w_N\rangle^{\perp}$ corestricted to its image) and $U_N$, $\tilde{U}_N$ are Haar unitaries in $U(N-2)$.
As in the proof of Theorem~\ref{th:cfreeness}, we then write
$$
\varphi^{u_N}(P(\bfA^u_N,\bfB^u_N)) = \varphi^{u_N}(P_1(\bfA^u_N)) \cdots \varphi^{u_N}(Q_p(\bfB^u_N)) + R_N
$$
where $R_N$ is a sum of traces of polynomials of the form
$$
u_N^{\star}u_NP_1({\bfA}_N)X_1Q_1({\bfB}_N)X_2\cdots Q_p({\bf B}_N)X_{2p} \quad (\textrm{~or~} u_N^{\star}u_NP_1({\bfA}_N)X_1Q_1({\bfB}_N)X_2\cdots P_p({\bf A}_N))
$$
where $X_i \in\{u_Nu_N^{\star},~~p^{\star}{U}_{N-2}pw_Nw_N^{\star},~~v_Nv_N^{\star}q^{\star}\tilde{U}_{N-2}q,~~w_Nw_N^{\star}p^{\star}{U}_{N-2}^\star p,~~q\tilde{U}_{N-2}^{\star}qv_Nv_N^{\star} \}$ and at least one of the $X$'s is not equal to $u_Nu_N^{\star}$. We immediately see that such polynomial always contains a term $U_{N-2}$, $U_{N-2}^\star$, $\tilde{U}_{N-2}$ or $\tilde{U}_{N-2}^\star$, so we can apply proposition \ref{prop:freenesswithunitaries} as in  the proof of Theorem~\ref{th:cfreeness} and infer that
$$
\mathbb{E}[\varphi^{u_N}(P_1({\bfA}_N)X_1Q_1({\bfB}_N)X_2\cdots Q_p({\bf B}_N)X_{2p}] \quad (\textrm{or~} \mathbb{E}[\varphi^{u_N}(P_1({\bfA}_N)X_1Q_1({\bfB}_N)X_2\cdots P_p({\bf A}_N))]) =O(N^{-1}).
$$\end{proof}
It remains to prove that, almost surely,
$$\varphi^{x_N}_{{\bf A}_N^{u,v},{\bf B}_N^{u,w}}(P)=\mathbb{E}[\varphi^{x_N}_{{\bf A}_N^{u,v},{\bf B}_N^{u,w}}(P)]+o\left(1\right)$$
and
$$\varphi^{x_N}_{{\bf A}_N^{u,v}}\prescript{}{\psi_{{\bf A}_N}\!\!}{*}^{}_{\psi_{{\bf B}_N}}\varphi^{x_N}_{{\bf B}_N^{u,w}}(P)=\mathbb{E}\left[\varphi^{x_N}_{{\bf A}_N^{u,v}}\prescript{}{\psi_{{\bf A}_N}\!\!}{*}^{}_{\psi_{{\bf B}_N}}\varphi^{x_N}_{{\bf B}_N^{u,w}}(P)\right]+o\left(1\right).$$
It is a consequence of the concentration inequality of
Theorem~\ref{th:concentration_unitary} for $V_N$ and $W_N$
(we refer to the proof of Proposition~\ref{Lipschitz} for the arguments). We get almost surely
$$\varphi^{x_N}_{{\bf A}_N^{u,v},{\bf B}_N^{u,w}}(P)=\varphi^{x_N}_{{\bf A}_N^{u,v}}\prescript{}{\psi_{{\bf A}_N}\!\!}{*}^{}_{\psi_{{\bf B}_N}}\varphi^{x_N}_{{\bf B}_N^{u,w}}(P)+o(1).$$
Equivalently
$$\varphi^{x_N}_{{\bf A}_N^{u,v},{\bf B}_N^{u,w}}(P)=\varphi^{x_N}_{{\bf A}_N^{u,v}}\prescript{}{\varphi^{w_N}_{{\bf A}_N^{u,v}}\!\!}{*}^{}_{\varphi^{v_N}_{{\bf B}_N^{u,w}}}\varphi^{x_N}_{{\bf B}_N^{u,w}}(P)+o(1)$$
because $\varphi^{w_N}_{{\bf A}_N^{u,v}}(P)=\psi_{{\bf A}_N}(P)+o(1)$ and $\varphi^{v_N}_{{\bf B}_N^{u,w}}(P)=\psi_{{\bf B}_N}(P)+o(1)$.
\end{proof}

\begin{remark}
We notice an asymmetry in the way the two ensembles of matrices $\bfA_N$ and $\bfB_N$ are defined; the ensemble $\bfA_N$ is rotated by a unitary matrix $V_N$ leaving invariant the plane $\langle u_N,v_N\rangle$, the ensemble $\bfB_N$ is rotated by a unitary matrix $W_N$ leaving invariant the plane $\langle u_N,w_N\rangle$. What happen when we have three ensembles of matrices? Pick a third ensemble of deterministic matrices ${\bf C}_{N}$ bounded in operator-norm uniformly in $N$, and set:
$$
[{\bf C}_N^{w}]^{v} := \{ V_N(W_NC_NW_N^\star) V_N^\star,~C_N \in {\bf C}_N \},\ \  \text{and}\ \ [{\bf C}_N^{v}]^{w} := \{ W_N(V_NC_NV_N^\star) W_N^\star,~C_N \in {\bf C}_N \}.
$$
First of all, albeit the random matrix $V_NW_N$ do only leave invariant $u_N$, it is not distributed uniformly in $U(N)\cap \mathrm{Stab}(u_N)$, since the coefficient of $V_NW_N(w_N)$ along $v_N$ is always zero.

Secondly, the ensemble of matrices $[{\bf C}_N^{w}]^{v}$ and $[{\bf C}_N^{v}]^{w}$has same distributions, almost surely, with respect to the triad $(u_N,v_N,w_N)$, in fact:
\begin{equation*}
    \varphi^{u_N}_{[{\bf C}_N^{w}]^{v}} = \varphi^{u_N}_{[{\bf C}_N^{v}]^{w}}=\varphi^{u_N}_{{\bf C}_N},\ \  \text{and}\ \ \psi_{{\bf C}_N} =\varphi^{x_N}_{[{\bf C}_N^{w}]^{v}} + o(1)= \varphi^{x_N}_{[{\bf C}_N^{v}]^{w}} + o(1), \ \ \text{for}\ \ x_N\in\{v_N,w_N\}.
\end{equation*}
Thirdly, since the convergence we proved in the propositions above are uniform in the ensemble of matrices, one has almost surely
\begin{align*}
\varphi^{x_{n}}_{\bfA_N^{v}, [{\bf C}_N^{v}]^{w}} &= \varphi^{x_N}_{{\bf A}_N^{v}}\prescript{}{\varphi^{w_N}_{{\bf A}_N^{v}}}{*}^{}_{\varphi^{v_N}_{[{\bf C}^{v}_N]^{w}}}\varphi^{x_N}_{[{\bf C}^{v}_N]^{w}}(P) + o(1) \\
&= \varphi^{x_N}_{{\bf A}_N^{v}}\prescript{}{\varphi^{w_N}_{{\bf A}_N^{v}}}{*}^{}_{\varphi^{v_N}_{[{\bf C}^{w}_N]^{v}}}\varphi^{x_N}_{[{\bf C}^{w}_N]^{v}}(P) + o(1).
\end{align*}
For the same reasons, with
$$
{\bf X}_N^{v} := \{\bfA_N^{v }, [{\bf C}_N^{w}]^{v} \},
$$
one has
\begin{equation*}
    \varphi^{x_N}_{{\bf X}^{v}_N, {\bf B}_N^{w}} = \varphi^{x_N}_{{\bf X}_N^{v}}\prescript{}{\varphi^{w_N}_{{\bf X}_N^{v}}}{*}^{}_{\varphi^{v_N}_{{\bf B}^{w}_N}}\varphi^{x_N}_{{\bf B}^{w}_N}(P) + o(1).
\end{equation*}
Hence,
\begin{equation*}
(\varphi_{{\bf A}_N^v,{\bf B}_N^w,[{\bf C}_N^w]^v}^{x_N})_{x=u,v,w} = ((\varphi_{{\bf A}_N}^{x_N})_{x=u,v,w} \leftthreetimes (\varphi_{{\bf B}_N}^{x_N})_{x=u,v,w}) \leftthreetimes (\varphi_{{\bf C}_N}^{x_N})_{x=u,v,w}+o(1).\end{equation*}
\end{remark}

\section{Amalgamation}
\label{sec:amalgamation}
In this section, instead of considering a vector state $\varphi^{v_N}$ along a direction $v_n$ given by a one-dimensional subspace, we want to consider, broadly speaking, the situation where a subspace $V_N$ of $\mathbb{C}^N$ is fixed for each integer $N$, has dimension $d_N \geq 1$ and study in high dimensions the distribution of the projection onto $V_N$ of randomly rotated matrices. This is a natural extension of the questions adressed in the previous sections. The objective is to give matricial models to operator-valued conditional freeness, with amalgamation over an algebra of matrices or over an algebra of idempotents.

The definition of operator-valued conditional freeness is formally very close to its scalar counterpart, except that the state $\psi$ and the state $\varphi$ may take values in a non-commutative algebra $\mathcal{B}$. The appropriate setting is an operator-valued probability space, that is an algebra $\mathcal{A}$ endowed with two commuting actions of an algebra $\mathcal{B}$. In our case those two algebras will be algebras of matrices. The two states $\varphi$ and $\psi$ are linear with respect to these two actions, this means:
\begin{equation*}
    \psi(b_1 a b_2) = b_1 \psi(a) b_2,\quad \varphi(b_1 a b_2) = b_1 \varphi(a) b_2
\end{equation*}
We introduce three models displaying, with the terminology in use in the previous sections, asymptotic conditional operator-valued freeness.

In the first one, for each $N\geq 1$, we fix an orthogonal splitting of the $U(N)$ fundamental representation $\mathbb{C}^{N}$ into summands of fixed dimension $d_N=d$ (independent of $N$ and independent of the summand). In the asymptotic regime $N\to +\infty$, the number of summands in the splitting tends to infinity. Amalgamation in this model is over the algebra of matrices with dimensions $d \times d$. In this model, modulo change of basis, rotating is done by considering a matrix with dimensions $dp_N \times dp_N$, $N = d_Np_N$, as a matrix of size $p_N$
with coefficients matrices of size $d$ and conjugating by a random unitary with dimension $p_N$.

In the second one, the number of summands in the splitting is fixed, say equal to $p$. In the asymptotic regime, the dimensions of each of the summand tends to infinity. Amalgamation is over the algebra of matrices with dimensions $p\times p$. Rotation by a unitary matrix in this models, mean rotating each block of size $d_N$ with a single unitary matrix of the same size, that is conjugating by a random block unitary diagonal matrix.

\subsection{Set-up}

There are various models one can design. In the first one, we start from a given orthogonal splitting of the space $\mathbb{C}^{N}$ as a finite direct sum
\begin{equation}
\label{eqn:decompo}
\mathbb{C}^{N} = \bigoplus_{k=1}^{p_N} V_N^{(k)}
\end{equation}
where for each $k$, $V_N^{(k)}$ is vector space with dimension $d_N$ (with $N=p_Nd_N$). In order to define a conditional expectation, we need the additional data of a an orthonormal basis ${\bf v}_N^{(k)} = (v^{(k)}_1,\ldots,v^{(k)}_n)$ for each subspace $V^{(k)}_N$. We collect these bases in ${\bf v}_N = ({\bf v}_N^{(1)},\ldots,{\bf v}_N^{(k)})$.
For each $N\geq 1$ and given the decomposition \eqref{eqn:decompo}, $\mathcal{M}_{N,N}(\mathbb{C})$ is a $\mathcal{M}_{d_N,d_N}(\mathbb{C})$-bimodule in the following sense:
\begin{equation*}
    m\cdot M \cdot n := \sum_{i,j,k,l} {\bf v}_N^{(i)}\cdot m \cdot {\bf v}_N^{(j)}{}^{\star} \cdot M \cdot {\bf v}_N^{(l)}\cdot n \cdot {\bf v}_N^{(k)}{}^{\star}.
\end{equation*}
Given a set of indeterminates ${\bf X}$, we denote by $\mathcal{M}_{d_N,d_N}(\mathbb{C})\langle {\bf X} \rangle$ all polynomials in the non-commutative indeterminates with coefficients in $\mathcal{M}_{d_N,d_N}(\mathbb{C})$. A monomial in $\mathcal{M}_{d_N,d_N}(\mathbb{C})$ is an alternating word in indeterminates and elements in $\mathcal{M}_{d_N,d_N}(\mathbb{C})$:
$$
M_1X_1M_2\cdots M_k X_k \in \mathcal{M}_{d_N,d_N}(\mathbb{C})\langle {\bf X} \rangle.
$$
As before, given an ensemble of random matrices $\bfA_N$ in $\mathcal{M}_{N}(\mathbb{C})$, we denote by $\varphi^{{\bf v}_N^{(1)}}_{\bfA_N}$ the $\mathcal{M}_{d,d}(\mathbb{C})$-valued bimodule map over $\mathcal{M}_{d,d}(\mathbb{C})\langle {\bf A} \rangle$ computing mixed
of matrices in $\bfA_N$.
Given matrices $A,B \in \mathcal{M}_{p_N,p_N}$ with dimension $p_N\times p_N$, one sets
$$
A \cdot M_N \cdot B =\sum_{i,j,k,l=1}^p A(i,j){\bf v}_N^{(i)}{\bf v}_N^{(j)}{}^{\star}M_N {\bf v}_N^{(k)} {\bf v}_N^{(l)}{}^{\star} B(k,l).
$$
We use the same symbol for two pairs of actions of $\mathcal{M}_{d_N,d_N}(\mathbb{C})$ and $\mathcal{M}_{p_N,p_N}(\mathbb{C})$. There will be no risk of confusions in mathematics expressions because the two are disentangle by the dimensions of the matrices acting; either $(d,d)$ (or $d_N\times d_N$) either $(p,p)$ (or $(p_N,p_N)$). Observe that the two bimodule structures we introduce commute with each other, this will be extensively used in the following paragraphs without recalling it.
\subsection{Square model I}
In the asymptotic regime studied in this section, $d_N = d$ is held constant while $p_N$ tends to infinity.

\begin{figure}[!h]
    \centering
    \includesvg[scale=0.8]{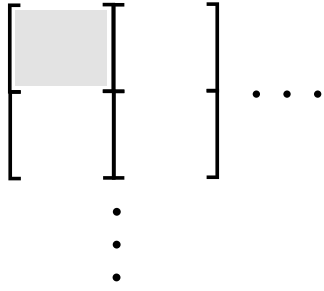}
    \caption{\label{fig:squareun} First model of conditional operator-valued freeness:  $
d_N = d, \quad p_N \to +\infty
$.}
\end{figure}

Given a random matrix $M_N$ with dimensions $(N,N)$, we define the two conditional expectations (with the notations of the previous section):
\begin{align*}
&\varphi^{{\bf v}_N^{(1)}}(M_N):= \frac{1}{d}\mathbb{E}[{\bf v}_N^{(1)}{\bf v}_N^{(1)}{}^{\star}M_N {\bf v}_N^{(1)} {\bf v}_N^{(1)}{}^{\star} ] \in \mathcal{M}_{d,d}(\mathbb{C}),\\
&\psi^{{\bf v} _N}(M_N) := \mathbb{E}[\frac{1}{p_N} \sum_{k=1}^{p_N}{\bf v}_N^{(k)}{\bf v}_N^{(k)}{}^{\star}M_N {\bf v}_N^{(k)}{\bf v}_N^{(k)}{}^{\star}] \in \mathcal{M}_{d,d}(\mathbb{C}).
\end{align*}
The proof of the following proposition relies on Theorem 5.1 in \cite{curran2011asymptotic} and arguments very  similar to the ones developed in the proof of Proposition \ref{th:asymptotic_freeness} (if not verbatim of the last).
\begin{proposition} We pick $\bfA_N$ and $\bfB_N$ be two ensembles of $N\times N$ independent matrices bounded in operator norm. We let $U_N$ be a Haar unitary with dimensions $p_N\times p_N$,  leaving the first vector $\varepsilon_1$ of the canonical basis of $\mathbb{C}^{p_N}$ invariant
	$$
	U_N  = \varepsilon_1\varepsilon_1^{\star} + (I_N- \varepsilon_1\varepsilon_1^{\star})\tilde{V}_N( I_N-\varepsilon_1\varepsilon_1^{\star})
	$$
	where $\tilde{V}_{N}$ is a Haar unitary in $U(p_N-1)$. Then, as $N$ tends to infinity, and almost surely,
	$$
	\varphi^{{\bf v}_N^{(1)}}[P(\bfA_N,~U_N\cdot\bfB_N\cdot U^{\star}_N)] = \varphi_{\bfA_N}^{{\bf v}_N^{(1)}} {}_{\psi_{\bfA_N}^{{\bf v}_N}}\!\!*_{\psi_{\bfB_N}^{{\bf v}_N}} \varphi_{\bfB_N}^{{\bf v}_N^{(1)}} + O(p_N^{-1})
	$$
	for any polynomial $P \in \mathcal{M}_{d,d}(\mathbb{C})\langle X_k,Y_k:~k \in K\rangle$.
\end{proposition}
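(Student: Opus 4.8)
The plan is to transcribe, almost word for word, the proof of Theorem~\ref{th:cfreeness} (and of Theorem~\ref{th:cfreeness_almost_sure}) into the block-matrix, operator-valued setting, replacing the dimension $N$ by $p_N$ and the scalar functionals by their $\mathcal{M}_{d,d}(\mathbb{C})$-valued counterparts; the only genuinely new input will be the operator-valued asymptotic freeness of block Haar unitaries, supplied by Theorem~5.1 of \cite{curran2011asymptotic}. First I would identify $\mathcal{M}_N(\mathbb{C})$ with $\mathcal{M}_{p_N}(\mathbb{C})\otimes\mathcal{M}_{d}(\mathbb{C})$ through the orthonormal bases ${\bf v}_N$, so that the $\mathcal{M}_{p_N}(\mathbb{C})$-action $U_N\cdot\,\cdot\,\cdot U_N^{\star}$ becomes conjugation by $U_N\otimes I_d$, the conditional expectation $\psi^{{\bf v}_N}$ becomes $\mathbb{E}\big[(\mathrm{tr}_{p_N}\otimes\mathrm{id}_d)(\,\cdot\,)\big]$, and $\varphi^{{\bf v}_N^{(1)}}$ becomes (a fixed multiple of) the compression $M\mapsto{\bf v}_N^{(1)}{}^{\star}M{\bf v}_N^{(1)}$ onto the first summand $V_N^{(1)}$. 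After conditioning on $\bfA_N$ and $\bfB_N$ (so that they may be treated as deterministic, the errors being uniform over operator-norm balls), two elementary observations replay the reductions of Theorem~\ref{th:cfreeness}: $\psi^{{\bf v}_N}$ is invariant under conjugation by $U_N\otimes I_d$, and $U_N\otimes I_d$ restricts to the identity on $V_N^{(1)}$, so $\Pi_1(U_N\otimes I_d)=\Pi_1=(U_N^{\star}\otimes I_d)\Pi_1$ where $\Pi_1:=\varepsilon_1\varepsilon_1^{\star}\otimes I_d$ is the orthogonal projection onto $V_N^{(1)}$; consequently $\psi^{{\bf v}_N}$ and $\varphi^{{\bf v}_N^{(1)}}$ of any polynomial in $\bfA_N$ alone, or in $U_N\cdot\bfB_N\cdot U_N^{\star}$ alone, equal the corresponding one-algebra functionals exactly. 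By $\mathcal{M}_{d,d}(\mathbb{C})$-bilinearity and the operator-valued centering procedure it therefore suffices to treat $P=P_1(A)Q_1(B)\cdots P_s(A)Q_s(B)$ alternating with $\psi^{{\bf v}_N}_{\bfA_N}$-centered $A$-blocks and $\psi^{{\bf v}_N}_{\bfB_N}$-centered $B$-blocks.

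Next I would expand each $U_N\otimes I_d=\Pi_1+(q\widetilde V_N q^{\star})\otimes I_d$, with $q:\mathbb{C}^{p_N-1}\hookrightarrow\mathbb{C}^{p_N}$ the canonical isometry and $\widetilde V_N\in U(p_N-1)$ the Haar unitary of the statement, obtaining $P(\bfA_N,(U_N\otimes I_d)\bfB_N(U_N^{\star}\otimes I_d))=\sum_{S\subseteq\{1,\dots,s\}}R_S$ exactly as in \eqref{eq:lesRS}. Using $\Pi_1^2=\Pi_1$ and $\Pi_1 M\Pi_1=\varepsilon_1\varepsilon_1^{\star}\otimes({\bf v}_N^{(1)}{}^{\star}M{\bf v}_N^{(1)})$, the term $R_\emptyset$ telescopes to $\varphi_{\bfA_N}^{{\bf v}_N^{(1)}}(P_1)\varphi_{\bfB_N}^{{\bf v}_N^{(1)}}(Q_1)\cdots\varphi_{\bfA_N}^{{\bf v}_N^{(1)}}(P_s)\varphi_{\bfB_N}^{{\bf v}_N^{(1)}}(Q_s)$, which is precisely the value of the $\mathcal{M}_{d,d}(\mathbb{C})$-valued conditionally free product $\varphi_{\bfA_N}^{{\bf v}_N^{(1)}}\prescript{}{\psi_{\bfA_N}^{{\bf v}_N}}{*}^{}_{\psi_{\bfB_N}^{{\bf v}_N}}\varphi_{\bfB_N}^{{\bf v}_N^{(1)}}$ on the alternating centered word $P$; so the statement reduces to showing $\mathbb{E}[\varphi^{{\bf v}_N^{(1)}}(R_S)]=O(p_N^{-1})$ for every nonempty $S$. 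Here I would repeat the manipulation of Theorem~\ref{th:cfreeness}: each $R_S$ with $S\neq\emptyset$ has the form $M_N(1)(q\widetilde V_N^{\,n_1}q^{\star}\otimes I_d)M_N(2)\cdots(q\widetilde V_N^{\,n_\ell}q^{\star}\otimes I_d)M_N(\ell+1)$ with $n_i\in\{\pm1\}$, whose factors $M_N(i)$ are either of the form $P_j(\bfA_N)$ or $Q_j(\bfB_N)$, or else contain a $\Pi_1$ (hence factor through $V_N^{(1)}$, and after compression by $q^{\star}\otimes I_d$ vanish since $(q^{\star}\otimes I_d)\Pi_1=0$, which disposes of the degenerate case $M_N(\ell+1)=I_N$ exactly as in Theorem~\ref{th:cfreeness}); cyclically reordering and compressing by $q^{\star}\otimes I_d$ rewrites $\varphi^{{\bf v}_N^{(1)}}(R_S)$ (and $\psi^{{\bf v}_N}(R_S)$) as an $\mathcal{M}_{d,d}(\mathbb{C})$-valued normalized partial trace over $\mathbb{C}^{p_N-1}\otimes\mathbb{C}^d$ of a word alternating in the deterministic matrices $(q^{\star}\otimes I_d)M_N(i)(q\otimes I_d)$ and in the block Haar unitary $\widetilde V_N\otimes I_d$, and the centering of the $P_j(\bfA_N),Q_j(\bfB_N)$ makes the amalgamated normalized traces of these compressed matrices of size $O(p_N^{-1})$.

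At this point Theorem~5.1 of \cite{curran2011asymptotic} — the operator-valued analogue of Proposition~\ref{prop:freenesswithunitaries}, asserting asymptotic $\mathcal{M}_{d}(\mathbb{C})$-freeness of conjugation by a block Haar unitary $\widetilde V_N\otimes I_d$ from deterministic families, with an $O(p_N^{-2})$ rate uniform over operator-norm balls — applies; combined with the same argument as in the proof of Proposition~\ref{prop:freenesswithunitaries} (allowing the inserted matrices to have amalgamated normalized trace only $O(p_N^{-1})$ rather than exactly $0$, via the operator-valued version of the Kreweras-complement estimate behind Lemma~\ref{lem:almostcentered}), this yields $\mathbb{E}[\psi^{{\bf v}_N}(R_S)]=O(p_N^{-2})$ and $\mathbb{E}[\varphi^{{\bf v}_N^{(1)}}(R_S)]=O(p_N^{-1})$, completing the estimate in expectation. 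Finally I would upgrade this to the almost sure statement (up to $o(1)$) exactly as in Proposition~\ref{Lipschitz}: each matrix entry of $U\mapsto\varphi^{{\bf v}_N^{(1)}}(P(\bfA_N,(U\otimes I_d)\bfB_N(U^{\star}\otimes I_d)))$ and of the $\psi^{{\bf v}_N}$-version is Lipschitz on $U(p_N)$ for the Hilbert--Schmidt metric with the constants computed there, so concentration of Haar measure on $U(p_N-1)$ and Borel--Cantelli force convergence to the mean. I expect the main obstacle to be precisely the extraction of Theorem~5.1 of \cite{curran2011asymptotic} in the form needed here — in particular a rate uniform in the deterministic matrices together with the "almost centered" refinement — since everything else is a faithful block-matrix rewriting of arguments already in the paper.
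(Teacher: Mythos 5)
Your argument is correct in outline, but it takes a genuinely different route from the paper's. You rerun the whole proof of Theorem~\ref{th:cfreeness} \emph{intrinsically} in the operator-valued setting: tensor decomposition $\mathcal{M}_N(\mathbb{C})\cong\mathcal{M}_{p_N}(\mathbb{C})\otimes\mathcal{M}_d(\mathbb{C})$, the $R_S$-expansion of $U_N=\varepsilon_1\varepsilon_1^{\star}+q\widetilde V_N q^{\star}$, telescoping of $R_{\emptyset}$ through $\Pi_1 M\Pi_1$, and then an appeal to an amalgamated analogue of Proposition~\ref{prop:freenesswithunitaries} (via Theorem~5.1 of \cite{curran2011asymptotic}) together with an operator-valued version of the Kreweras-complement estimate of Lemma~\ref{lem:almostcentered} to kill the terms $R_S$, $S\neq\emptyset$. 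The paper instead short-circuits all of this: since amalgamation is over the \emph{fixed} finite-dimensional algebra $\mathcal{M}_d(\mathbb{C})$, it expands each $P_i(\bfA_N)$ or $Q_i(\bfB_N)$ in the canonical basis $(e(r))_r$ of $\mathcal{M}_d(\mathbb{C})$ with coefficients $A(i,N,r)\in\mathcal{M}_{p_N}(\mathbb{C})$; centering with respect to $\psi^{{\bf v}_N}$ forces $\mathrm{tr}_{p_N}(A(i,N,r))=0$ for every coordinate $r$, the coordinates remain uniformly bounded in operator norm, and each coordinate word $A(1,N,r_1)U_N A(2,N,r_2)U_N^{\star}\cdots$ falls verbatim under the scalar Theorem~\ref{th:cfreeness} (whose estimates are uniform on operator-norm balls and tolerate $N$-dependent words of bounded degree and coefficients). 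Reassembling the ordered products $e(r_1)\cdots e(r_k)$ recovers the $\mathcal{M}_d(\mathbb{C})$-valued c-free product. The coordinate reduction buys exactly what you identify as your ``main obstacle'': it removes any need for quantitative operator-valued versions of Proposition~\ref{prop:freenesswithunitaries} and Lemma~\ref{lem:almostcentered}, which you would otherwise have to extract from \cite{curran2011asymptotic} with explicit rates and an ``almost centered'' refinement. Your approach, on the other hand, is the one that would generalize to amalgamation over an algebra whose dimension grows with $N$ (or over a general Banach algebra, as in the remark closing that subsection), where the coordinate trick is no longer available. Both proofs handle the almost sure upgrade identically, by the Lipschitz/concentration argument of Proposition~\ref{Lipschitz}.
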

\begin{proof} We prove the result in expectation. The almost sure statement follows from concentration.
For simplicity, we suppose that ${\bf v}_N^{(k)} = (\varepsilon_{d(k-1)+1},\cdots,\varepsilon_{dk})$, where $(\varepsilon_i)_{1 \leq i \leq N}$ is the canonical basis of $\mathbb{C}^{N}$. We write a matrix of dimensions $p_N\times p_N$ with coefficients in $\mathcal{M}_d(\mathbb{C})$ as an element of the tensor space $\mathcal{M}_{p_N}(\mathbb{C})\otimes\mathcal{M}_{d}(\mathbb{C})$. We pick $P_1,\ldots,P_k$ alternating polynomials in $\mathcal{M}_{d}(\mathbb{C})\langle X_k,Y_k \rangle $, centered with respect to the operator-valued state $\psi^{{\bf v}_N}_{\bfA_N} * \psi^{{\bf v}_N}_{{\bf B}_N}$. So in fact, the polynomials depends on $N$, so we assume further that their coefficients are bounded uniformly in $N\geq 1$ (this typically what happened when taking a an alternated word and centering each $P_i$, since $\psi_{\bfA_N}(P)$ is bounded uniformly in $N$). We have to show that (assuming $P_1 \in \mathcal{M}_{d}(\mathbb{C})\langle X_k\rangle$ for simplicity)
\begin{align*}
    &\mathbb{E}[\varphi^{{\bf v}^{(1)}_N}(P_1(\bfA_N)\cdot U_N\cdot P_2(\bfB_N)\cdot U_N^{\star}\cdots U_N\cdot P_k({\bfB_N})\cdot U_N^{\star})] \\
    \textrm{ or } &\mathbb{E}[\varphi^{{\bf v}^{(1)}_N}(P_1(\bfA_N)\cdot U_N\cdot P_2(\bfB_N)\cdot U_N^{\star}\cdots U_N\cdot P_k({\bfB_N}\cdot U_N^{\star})])
\end{align*}
tends to zero as $N$ goes to infinity. We call $(e^i_j)_{1\leq i,j \leq d}$ the canonical basis of $\mathcal{M}_d(\mathbb{C})$ and write $A(i,N,r)$ for the matrix of the coefficients of $P_i(\bfA_N \textrm{ or } \bfB_N)$, $1 \leq i \leq k$ in the basis element $r=(k,l)$.
Then
\begin{align*}
    &\mathbb{E}[\varphi^{{\bf v}^{(1)}_N}(P_1(\bfA_N)\cdot U_N\cdot P_2(\bfB_N)\cdot U_N^{\star}\cdots U_N\cdot P_k({\bfB_N})\cdot U_N^{\star})] \\
    &\hspace{1cm}=\sum_{r}\mathbb{E}[\varphi^{{\bf v}^{(1)}_N}(A(1,N,r_1)\cdot U_N\cdot A(1,N,r_2)U_N^{\star}\cdots A(k,N,r_k)]e(r_1)\cdots e(r_k))) \\
    &\hspace{1cm}=\sum_{r} \mathbb{E}[\mathrm{Tr}_{p_N}[\varepsilon_1\varepsilon_1^{\star} A(1,N,r_1)U_NA(1,N,r_2)U_N^{\star}\cdots A(k,N,r_k)]]e(r_1)\cdots e(r_k).
\end{align*}
We may thus proceed as in the scalar case to obtain, for each choice of $(r_1,\ldots,r_k)$
\begin{align*}
&\mathbb{E}[\mathrm{Tr}[\varepsilon_1\varepsilon_1^{\star} A(1,N,r_1)U_NA(1,N,r_2)U_N^{\star}\cdots A(k,N,r_k)]] \\
&\hspace{1cm} = \mathrm{Tr}[\varepsilon_1\varepsilon_1^{\star} A(1,N,r_1)]\mathrm{Tr}(\varepsilon_1\varepsilon_1^{\star}(1,N,r_2)]\cdots \mathrm{Tr}[\varepsilon_1\varepsilon_1^{\star}A(k,N,r_k)] + o(1).
\end{align*}
This concludes the proof of asymptotic conditional operator-valued freeness.
\end{proof}
There is not much differences between the scalar case and this first model, due to the fact that amalgamation is over a finite dimensional algebra. However, things get different when considering infinitesimal distribution. In fact, the operator-valued state $\psi^{{\bf v}_N}$ is not tracial anymore, still traciality of $tr_{d_N}$ can be leveraged to give an algebraic recipe for computing next order in the convergence of $\psi^{{\bf v}_N}$.

We suppose existence for $X\in\{\bfA,\bfB\}$ of a $\frac{1}{N}$ expansion of the distribution $\varphi^{{\bf v}_N}_{{\bf X}_N}$,
\begin{equation}
\label{eqn:expansionam}
\psi^{{\bf v}_N}_{{\bf X}_N}(P)=\psi_{{\bf X}}^{{\bf v}}(P) + \frac{1}{p_N} \omega_{{\bf X}}(P) + O(p_N^{-2})
\end{equation}
for any polynomial $P \in \mathcal{M}_d(\mathbb{C})\langle {\bf X}\rangle$.
In the scalar case, we defined the appropriate notion of independence, that we called cyclic-conditional independence, to compute the $\frac{1}{N}$ coefficients in the $\frac{1}{N}$ expansion of the mixed distribution of $\bfA_N$ and $U_N\bfB_N U_N$, with $U_N$ a sequence of random Haar unitary matrices leaving invariant a vector $v_N$. In the operator-valued case, for the model studied in this section, such a $\frac{1}{N}$ expansion will of course holds for the operator-valued mixed distribution of $\bfA_N$ and $U_N\bfB_N U_N$. Nonetheless, the algebraic rules that is the corresponding notion of independence for computing the $\frac{1}{N}$ coefficient are not as \emph{simple} since $\varphi^{{\bf v}_N}$ is not tracial, and thus cyclic words are not enough, to put it roughly.
With the notations of the proof of the previous proposition, consider the alternated word $P_1(\bfA_N)P_2(\bfB_N)P_3(\bfA_N)$. Since $\varphi^{{\bf v}_N}$ is not tracial the limit of $\varphi^{{\bf v}_N}(P_3(\bfA_N)P_1(\bfA_N)P_2(\bfB_N))$ differs from $\varphi^{{\bf v}_N}(P_1(A_N)P_2(B_N)P_3(A_N))$, but still exists (here we use the fact the algebra over which amalgamation is done is finite dimensional).
We may write
\begin{equation*}
    p_N\cdot \mathbb{E}[\varphi^{{\bf v}_N}(P_1(\bfA_N)U_NP_2(\bfB_N)U^{\star}_NP_3(\bfA_N))] = \sum_{r_1,r_2,r_3}\mathbb{E}[\mathrm{Tr}_{p_N}(A_N(1,r_1)U_NB_N(2,r_2)U_N^{\star}A(3,N,r_3))] e_{r_1}e_{r_2}e_{r_3}.
\end{equation*}

Once again, the $\frac{1}{N}$-expansion \eqref{eqn:expansionam} yields a similar $\frac{1}{N}$-expansion for the distribution with respect to the normalized trace $\mathrm{tr}_{d_n}$ of the ensembles $[\bfA]_N=\{A_N(r),A_N\in\bfA_N,1 \leq r \leq d\}$ and $[\bfB]_N=\{B_N(r), B_N\in\bfB_N, 1 \leq r \leq d^2\}$ corresponding to the coordinates of the ensembles $\bfA_N$ and $\bfB_N$ in the basis $\{e_r,1 \leq r \leq d^2\}$:
$$
\mathrm{tr}_{p_N}(P([{\bf X}]_N)) = \psi_{[{\bf X}]}(P) + \frac{1}{p_N}\omega_{[{\bf X}]}(P) + O(p_N^{-2})
$$
where ${\bf X} \in \{\bfA,\bfB\}$.

We write as usual $U_N = \varepsilon_1\varepsilon_1^{\star} + V_N$, substitute this expression in the right hand side of the above expression, use traciality of $\mathrm{Tr}_{p_N}$ to obtain, as $N$ tends to infinity (we drop subscripts to lighten the notations and use bold symbols for the basis elements to improve readability), using the rules of cyclic-conditional independence:
\begin{align*}
    &\sum_{r}\mathbb{E}[\mathrm{Tr}_{p_N}(A(1,N,r_1)U_NB(1,N,r_2)U_N^{\star}A(3,N,r_3))] e_{r_1}e_{r_2}e_{r_3} \\
    &\hspace{1cm} =\{\psi(A(3,r_3)A(1,r_1))\omega(B(2,r_2)) + [\varphi(A(3,r_3)A(1,r_1)) - \psi(A(3,r_3)A(1,r_1))]\varphi(A(2,r_2)) \\
    &\hspace{2cm} - \omega(A(1,r_1)A(3,r_3))\psi(B(2,r_2) \} e_{r_1}e_{r_2}e_{r_3} + o(1) \\
    &\hspace{1cm} ={\bf e_{r_1}}\psi(A(1,r_1)\, \omega(B(2,r_2) {\bf e_{r_2}}\, A(3,r_3))){\bf e_{r_3}} \\
    &\hspace{2cm}+[{\bf e_{r_1}}\varphi(A(3,r_3)\, [\varphi(B(2,r_2)) {\bf e_{r_2}}] \, A(1,r_1)) {\bf e_{r_3}} - {\bf e_{r_1}}\psi(A(1,r_3) \, [\varphi(A(2,r_2)) {\bf e_{r_2}}]\, A(3,r_3)) {\bf e_{r_3}}] \\
    &\hspace{3cm}  -  {\bf e_{r_1}} \omega(A(1,r_1)\psi(B(2,r_2)\, {\bf e_{r_2}} A(3,r_3)){\bf e_{r_3}} \\
    &\hspace{1.5cm}+ o(1)
\end{align*}
where for the last equality, we have used traciality of $\psi$ and $w$ and also $w(1)=0$ (in our model).
From the last equality, we infer that introducing the following bilinear map on $\mathcal{M}_d(\mathbb{C})$,
\begin{equation*}
\varphi_{\bf X_N}(P,Q) = (\mathrm{Tr}_{p_N} \otimes 1)(P({\bf X}_N)\cdot_1 [\varepsilon_1\varepsilon_1^{\star}\otimes I_d] \cdot_1 Q({\bf X}_N)) \in \mathcal{M}_d(\mathbb{C})\otimes \mathcal{M}_{d}(\mathbb{C}),~P,Q \in \mathcal{M}_{d}(\mathbb{C})\langle {\bf X} \rangle
\end{equation*}
and its limit as $N$ goes to infinity, that we denote by $\varphi^{{\bf v}^{(1)}}_{{\bf X}}$, where $\cdot_1$ indicates that we take the product only between the factors in $\mathcal{M}_{p_N}(\mathbb{C})$ of two tensors in $\mathcal{M}_{p_N}(\mathbb{C})\otimes\mathcal{M}_d(\mathbb{C})$ appears as necessary to give a basis-free expression of the limit of $d_N\cdot \psi^{{\bf v}_N}(P_1(A_N)P_2(B_N)P_3(A_N))$:
\begin{align*}
    d_N\psi^{{\bf v}_N}(P_1(\bfA_N)P_2(\bfB_N)P_3( \bfA_N)) = &\psi_{\bfA}(P_1\omega_{\bf B}(P_2)P_3) \\
    +&\varphi_{\bf A}(P_1,P_3) \# \varphi_{\bfB}(P_2)- \psi_{\bfA}(P_1 \varphi_{\bfB}(P_2) P_3) \\
    -&\omega_{\bfA}(P_1 \psi_{\bfB}(P_2) P_3) +o(1)\\
	=&\psi_{\bf A}(P_1(\omega_{\bfB}(P_2)-\varphi_{\bfB}(P_2))P_3) + \varphi_{\bfA}(P_1,P_3)\#\varphi_{\bfB}(P_2)  + o(1)
\end{align*}
with, for $M_1,M_2,N \in \mathcal{M}_d(\mathbb{C})$:
\begin{equation*}
    (M_1 \otimes M_2)\,\# \,N = M_1 \cdot N \cdot M_2 \in \mathcal{M}_d(\mathbb{C}).
\end{equation*}
Note that for any matrix $M \in \mathcal{M}_{d,d}(\mathbb{C})$, $P,Q \in \mathcal{M}_{p_N,p_N}$, one has 
$$
\varphi_{{\bf X}_N}(P,Q\cdot M)= \varphi_{{\bf X}_N}(P,Q)M,~\varphi_{{\bf X}_N}(M\cdot P,Q)= M \varphi_{{\bf X}_N}(P,Q),~\varphi_{{\bf X}_N}(P,M\cdot Q)=\varphi_{{\bf X}_N}(P\cdot M,Q)
$$
$$
\varphi_{{\bf X}_N}(1,P) = \varphi_{{\bf X}_N}(P,1) = \varphi_{{\bf X}_N}(P) .
$$
Of course, $\varphi_{{\bf X}_N}(-,-)$ and $\varphi_{{\bf X}}$ are completely determined by $\varphi_{{\bf X}_N}$ and $\varphi_{\bf X}$. 
The computations we did above generalize to prove the following proposition.
\begin{proposition} With the notations and assumptions introduced so far, for any alternated word $P_1\cdots P_k \in \mathcal{M}_d(\mathbb{C})(\bfA, \bfB)$, with each $P_i$ centered with respect to $\psi=\psi_{\bfA} * \psi_{\bfB}$,
	\begin{equation*}
	\mathbb{E}[d_N \psi_{\bfA_N,U_N\bfB_NU^{\star}_N}^{{\bf v}_N}(P_1\cdots P_k)]
\end{equation*}
converges toward $\omega(P_1\cdots P_k)$ with $\omega:\mathcal{M}_{d}\langle \bfA,\bfB\rangle \to \mathcal{M}_{d,d}(\mathbb{C})$ an $\mathcal{M}_d(\mathbb{C})$-valued bimodule morphism on $\mathcal{M}_d(\mathbb{C})\langle\bfA, \bfB\rangle$ characterized by the following.
If $P_1,\ldots,P_k$ is cyclically alternated then
\begin{equation*}
  \omega(P_1\cdots P_k) = \varphi(P_1)\cdots \varphi(P_k)
\end{equation*}
if it is \emph{not} cyclically alternated ($P_1$,$P_k$ belong to the same subalgebra, $P_2,P_{k-1}$ and so on) then
\begin{equation*}
  \omega(P_1\cdots P_k) = \varphi(P_1,P_k) \# (\varphi(P_2)\cdots \varphi(P_{k-1})) + \psi(P_1(\omega(P_2\cdots P_{k-1})-\varphi(P_2)\cdots \varphi(P_{k-1}))P_k).
\end{equation*}
\end{proposition}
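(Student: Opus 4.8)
The plan is to upgrade the three‑letter computation carried out above into an induction on the word length $k$, working throughout in matrix coordinates over $\mathcal M_{d}(\mathbb C)$. Identify $\mathcal M_N(\mathbb C)\cong\mathcal M_{p_N}(\mathbb C)\otimes\mathcal M_d(\mathbb C)$ so that conjugation by $U_N$ is $X\mapsto(U_N\otimes I_d)X(U_N^{\star}\otimes I_d)$, fix a basis $(e_r)_{1\le r\le d^2}$ of $\mathcal M_d(\mathbb C)$, and expand $P_i(\mathbf A_N)=\sum_r A(i,r)\otimes e_r$ and $Q_i(\mathbf B_N)=\sum_r B(i,r)\otimes e_r$ with the coordinates living in the scalar coordinate ensembles $[\mathbf A]_N,[\mathbf B]_N$ (which inherit the $\tfrac1{p_N}$‑expansion \eqref{eqn:expansionam} and, by the standing assumptions, the convergence of their vector states at $\varepsilon_1$ and of the associated bilinear maps). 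Since the $P_i$ are centred for $\psi=\psi^{\mathbf v}_{\mathbf A}*\psi^{\mathbf v}_{\mathbf B}$, each coordinate $A(i,r),B(i,r)$ is centred for the limiting traces $\psi_{[\mathbf A]},\psi_{[\mathbf B]}$. Expanding over $(e_r)$ and inserting the block decomposition $U_N\mapsto\varepsilon_1\varepsilon_1^{\star}\otimes I_d+(\text{the rest})$ exactly as in the proof of Theorem~\ref{th:cfreeness}, the quantity $\mathbb E[d_N\psi^{\mathbf v_N}_{\mathbf A_N,U_N\mathbf B_NU_N^{\star}}(P_1\cdots P_k)]$ becomes a finite sum of scalar expectations $\mathbb E[\mathrm{Tr}_{p_N}(W_1\cdots W_k)]$, times basis products $e_{r_1}\cdots e_{r_k}$, where $W_i=A(i,r_i)$ or $W_i=U_NB(i,r_i)U_N^{\star}$ according to the nature of $P_i$. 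It then remains to pass each scalar expectation to its limit and to reassemble the coordinate sums, using repeatedly the identities $\sum_r\varphi_{[\mathbf X]}(X(i,r))\,e_r=\varphi^{\mathbf v^{(1)}}_{\mathbf X}(P_i)$, $\sum_{r,s}\varphi_{[\mathbf X]}(X(k,s)X(1,r))\,e_r\otimes e_s=\varphi^{\mathbf v^{(1)}}_{\mathbf X}(P_1,P_k)$, and (by traciality of $\psi_{[\mathbf X]}$) $\psi^{\mathbf v}_{\mathbf X}(P_1\,N\,P_k)=\sum_{r,s}\psi_{[\mathbf X]}(X(k,s)X(1,r))\,e_rNe_s$.

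If $P_1\cdots P_k$ is cyclically alternating (so $k$ is even), each $W_1\cdots W_k$ is cyclically alternating and centred for the limiting traces, and, because $U_N\varepsilon_1=\varepsilon_1$ forces $\langle\varepsilon_1,U_NB(i,r)U_N^{\star}\varepsilon_1\rangle=\langle\varepsilon_1,B(i,r)\varepsilon_1\rangle$, Theorem~\ref{th:cycliccfreeness} applied to the scalar Vortex model on $U(p_N)\cap\mathrm{Stab}(\varepsilon_1)$ with $[\mathbf A]_N,[\mathbf B]_N$ gives $\mathbb E[\mathrm{Tr}_{p_N}(W_1\cdots W_k)]\to\prod_i\varphi_{[\cdot]}(W_i)$; reassembling over $r_1,\dots,r_k$ yields $\omega(P_1\cdots P_k)=\varphi(P_1)\cdots\varphi(P_k)$.

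If instead $P_1$ and $P_k$ lie in the same subalgebra, say $\mathbf A$ (so $k$ is odd), I would use traciality of $\mathrm{Tr}_{p_N}$ to rotate to $\mathrm{Tr}_{p_N}\!\big(A(k,r_k)A(1,r_1)\,W_2\cdots W_{k-1}\big)$ and split the merged letter as $A(k,r_k)A(1,r_1)=(A(k,r_k)A(1,r_1))^{\circ}+\psi_{[\mathbf A]_N}(A(k,r_k)A(1,r_1))\,I_{p_N}$. On the $(\cdot)^{\circ}$‑part the word is cyclically alternating of even length $k-1$ and is handled by Theorem~\ref{th:cycliccfreeness} (after absorbing the $O(p_N^{-1})$ discrepancy between centring for $\psi_{[\mathbf A]_N}$ and for $\psi_{[\mathbf A]}$, which multiplies a bounded $\mathrm{Tr}_{p_N}$); reassembling this part produces $\varphi^{\mathbf v^{(1)}}(P_1,P_k)\#\big(\varphi(P_2)\cdots\varphi(P_{k-1})\big)-\psi\big(P_1(\varphi(P_2)\cdots\varphi(P_{k-1}))P_k\big)$, the subtraction coming from $\varphi_{[\mathbf A]}((\cdot)^{\circ})=\varphi_{[\mathbf A]}(\cdot)-\psi_{[\mathbf A]}(\cdot)$. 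On the scalar‑multiplier part, $W_2\cdots W_{k-1}$ is a shorter alternating word with centred letters, so the induction hypothesis gives $\sum_{r_2,\dots,r_{k-1}}\mathbb E[\mathrm{Tr}_{p_N}(W_2\cdots W_{k-1})]\,e_{r_2}\cdots e_{r_{k-1}}\to\omega(P_2\cdots P_{k-1})$, which reassembles (again using $\psi_{[\mathbf A]_N}=\psi_{[\mathbf A]}+O(p_N^{-1})$ against a bounded trace) to $\psi\big(P_1\,\omega(P_2\cdots P_{k-1})\,P_k\big)$. Adding the two contributions gives exactly the asserted non‑cyclic formula; the base case $k=1$ is $\omega(P_1)=\lim_N\mathbb E[\mathrm{Tr}_{p_N}(P_1([\mathbf X]_N))]=\omega_{\mathbf X}(P_1)$ from \eqref{eqn:expansionam}, and since the non‑cyclic recursion strictly shortens the word, the two formulas together with the $\mathcal M_d(\mathbb C)$‑bimodule‑morphism property indeed characterise $\omega$.

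The step I expect to be delicate is the error bookkeeping in the non‑cyclic case: one must check that \emph{every} correction introduced by re‑centring a merged (or conjugated) coordinate letter has the form $O(p_N^{-1})$ times an $\mathbb E[\mathrm{Tr}_{p_N}]$ that stays \emph{bounded} — never times one of size $p_N$ such as $\mathrm{Tr}_{p_N}(I_{p_N})$. This is guaranteed by a parity count: a cyclically alternating centred word has even length and normalized‑trace correction of order $p_N^{-1}$ (so its $\mathrm{Tr}_{p_N}$ is $O(1)$ by second‑order freeness), and when one removes or merges letters the induced merge‑and‑recentre recursion descends through odd lengths and terminates at a single almost‑centred letter, whose normalized trace is $O(p_N^{-1})$ by \eqref{eqn:expansionam}; tracking this is the only place where the argument differs in substance from the scalar proofs of Theorem~\ref{th:cfreeness} and Theorem~\ref{th:cycliccfreeness}, the rest being a routine transcription into coordinates, using that the two bimodule actions commute and that $U_N\otimes I_d$ fixes $\varepsilon_1\varepsilon_1^{\star}\otimes I_d$.
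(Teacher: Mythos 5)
Your argument is correct and is essentially the proof the paper intends: the paper itself offers nothing beyond ``the computations we did above generalize,'' and your induction on the word length --- passing to scalar traces of the coordinate ensembles, applying Theorem~\ref{th:cycliccfreeness} to cyclically alternating words, and rotating, merging and recentring the endpoint letters in the non-cyclic case so as to split off the $\varphi(P_1,P_k)\#(\cdots)$ term and the shorter word handled by induction --- is precisely that generalization of the three-letter computation. The error bookkeeping you single out is indeed the only delicate point, and it goes through as you describe, since every recentring correction is $O(p_N^{-1})$ multiplied by an unnormalized trace that the induction hypothesis (or the expansion \eqref{eqn:expansionam} in the single-letter base case) keeps bounded.
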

\begin{remark}
As a final word, the above results extend readily to
the case where the random Haar unitary matrices are replaced by quantum Haar unitary matrices and the amalgamation algebra is replace by any Banach algebra $\mathcal{B}$, thanks to \cite{curran2011asymptotic}.
\end{remark}
\subsection{Square model II}
For this model, we choose an orthonormal splitting of $\mathbb{C}^{N}$ into a fixed number of summands, say $p\geq 1$ and let the dimension $d_N$ of each of these subspaces to tend to infinity.

\begin{figure}[!h]
    \centering
    \includesvg{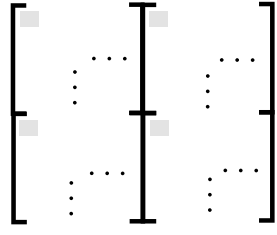}
    \caption{\label{fig:thirdmodel} Second model of conditional operator-valued freeness : $
p_N = p,\quad d_N \to +\infty
$. The grey squares indicated the coefficient computed by the vector state $\psi^{v_N}$. Ellipsis indicates that the asymptotic regime for which each block tends to infinity.}
\end{figure}

We recall the notations of the previous section, for each $1 \leq i \leq p$, ${\bf v}_N^{(i)}$ is a basis of $V_N^{(i)}$.
We define a conditional expectation $\psi^{{\bf v}_N}$ by taking the normalized trace $\mathrm{tr}_{d_N}$ of each:
$$
M_N(j,k)={\bf v}_N^{(j)}{\bf v}^{(j)}_N {}^{\star} \cdot M_N \cdot {\bf v}^{(k)}_N {}^{\star} {\bf v}_{N}^{(k)} \in \mathcal{M}_{d_N,d_N}(\mathbb{C}),~M_N \in \mathcal{M}_{N,N}(\mathbb{C})
$$
with dimension $d_N\times d_N$;
$$
\psi^{{\bf v}_N}(M_N) := (\mathrm{tr}_{d_N}(M_N(i,j))_{1 \leq i,j \leq N} \in \mathcal{M}_{p,p}(\mathbb{C}).
$$
We define a vector-state $\varphi^{{\bf v}_N}$ extracting the upper-left coefficient of each block $M_{N}(i,j)$:
\begin{equation*}
    \varphi^{{\bf v}_N}(M_N) =(\mathrm{Tr}_{d_N}(\varepsilon_1\varepsilon_1{}^{\star}M_N(i,j)))_{1 \leq i,j \leq p},~M_N\in\mathcal{M}_{N}.
\end{equation*}
To ease the exposition of our result, we suppose here forth that $p=2$ and drop references to the basis ${\bf v}_N$ in the notations.
\begin{proposition}
Let $U_{d_N}$ be a random Haar unitary matrix in $U(d_N)$ leaving the first vector $\varepsilon_1$ of the canonical basis of $\mathbb{C}^{d_N}$ invariant:
$$
U_{d_N} = \varepsilon_1\varepsilon_1^{\star} + (I_N- \varepsilon_1\varepsilon_1^{\star}) \tilde{V}_{d_N} ( I_N-\varepsilon_1\varepsilon_1^{\star})
$$
where $\tilde{V}_{d_N}$ is a random unitary matrix with dimensions $(d_N-1) \times (d_N-1)$. 
Let $\bfA_N$ and $\bfB_N$ be two ensembles of deterministic matrices bounded in operator-norm uniformly in $N \geq 1$.
Then as $N$ tends to infinity, 
$
\bfA_N
$
and 
$
U_{d_N}\bfB_NU_{d_N}^{\star}
$
are almost surely asymptotically conditionally free over the algebra $\mathcal{M}_{p,p}(\mathbb{C})$:
\begin{align*}
    &\psi_{\bfA_N,U_N\bfB_NU_N^{\star}} = \psi_{\bfA_N} * \psi_{\bfB_N} + o(1) \\
    &\varphi_{\bfA_N,U_N\bfB_NU_N^{\star}} = \varphi_{\bfA_N} \prescript{}{\psi_{\bfA_N}\!\!}{*}_{\psi_{\bfB_N}} \varphi_{\bfB_N} + o(1)
\end{align*}
\end{proposition}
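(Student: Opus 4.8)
The plan is to mimic the proof of Theorem~\ref{th:cfreeness}, but carried out one level up, at the amalgamation algebra $\mathcal{B}:=\mathcal{M}_{p,p}(\mathbb{C})$ (we keep $p$ general; the stated case $p=2$ is the special case). First I would identify $\mathcal{M}_{N}(\mathbb{C})$ with $\mathcal{M}_{p}(\mathbb{C})\otimes\mathcal{M}_{d_N}(\mathbb{C})$ via the splitting $\mathbb{C}^N=\bigoplus_k V_N^{(k)}$, assume without loss of generality that ${\bf v}_N^{(k)}$ is the $k$-th standard block so that $\varphi^{{\bf v}_N}(M)=(I_p\otimes\varepsilon_1^{\star})\,M\,(I_p\otimes\varepsilon_1)$ and $\psi^{{\bf v}_N}=\mathrm{id}_{\mathcal{M}_p}\otimes\mathrm{tr}_{d_N}$ — both being $\mathcal{B}$-valued conditional expectations — and observe that conjugating $\bfB_N$ by $U_{d_N}$ amounts to replacing it with $W_N\bfB_N W_N^{\star}$, where $W_N:=I_p\otimes U_{d_N}$. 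Because $\mathrm{tr}_{d_N}$ is conjugation-invariant, $\psi^{{\bf v}_N}$ is $W_N$-conjugation-invariant, so by $\mathcal{B}$-bimodule linearity the statement reduces to checking, for every alternating monomial $P=P_1(\bfA_N)W_N Q_1(\bfB_N)W_N^{\star}\cdots P_s(\bfA_N)W_N Q_s(\bfB_N)W_N^{\star}$ (and the variants starting with a $Y$-letter) whose factors are centered with respect to $\psi^{{\bf v}_N}$, that $\mathbb{E}[\psi^{{\bf v}_N}(P(\bfA_N,W_N\bfB_N W_N^{\star}))]=o(1)$ and $\mathbb{E}[\varphi^{{\bf v}_N}(P(\bfA_N,W_N\bfB_N W_N^{\star}))]=\varphi^{{\bf v}_N}(P_1(\bfA_N))\,\varphi^{{\bf v}_N}(Q_1(\bfB_N))\cdots+o(1)$ (matrix products in $\mathcal{B}$); these are exactly the defining relations of the $\mathcal{B}$-valued free and c-free products, so their validity identifies the joint distribution with the product. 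Then, as in Theorem~\ref{th:cfreeness}, I would write $U_{d_N}=\varepsilon_1\varepsilon_1^{\star}+q\tilde V_{d_N-1}q^{\star}$ with $q\colon\mathbb{C}^{d_N-1}\hookrightarrow\langle\varepsilon_1\rangle^{\perp}$ the canonical inclusion and $\tilde V_{d_N-1}$ Haar on $U(d_N-1)$.

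Next I would expand each occurrence of $W_N$ as $\pi+(I_p\otimes q\tilde V_{d_N-1}q^{\star})$, $\pi:=I_p\otimes\varepsilon_1\varepsilon_1^{\star}$. The term in which every $W_N$ is replaced by $\pi$ is, after applying $\varphi^{{\bf v}_N}$ and the elementary identity $\varphi^{{\bf v}_N}(M\pi M')=\varphi^{{\bf v}_N}(M)\varphi^{{\bf v}_N}(M')$, exactly the matrix product $\varphi^{{\bf v}_N}(P_1(\bfA_N))\,\varphi^{{\bf v}_N}(Q_1(\bfB_N))\cdots$, while applying $\psi^{{\bf v}_N}$ to it yields a factor $1/d_N$ because $\pi$ has rank $p$; every other term contains at least one factor $I_p\otimes q\tilde V_{d_N-1}q^{\star}$. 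To see that these remaining terms are $O(1/d_N)$ in expectation under both $\psi^{{\bf v}_N}$ and $\varphi^{{\bf v}_N}$, I would test them entrywise against the bounded matrices $E_{ij}\otimes I_{d_N}$, expand the word into $p\times p$ blocks, and observe that each block-entry is a product in $\mathcal{M}_{d_N}(\mathbb{C})$ of deterministic matrices — blocks of the $P_i(\bfA_N)$, $Q_i(\bfB_N)$, or rank-one matrices coming from $\pi$-insertions — interleaved with powers $\tilde V_{d_N-1}^{\pm1}$, at least one of which is present. Compressing by $q$ rewrites such a block-trace as an un-normalized trace in dimension $d_N-1$ of a word in the genuine Haar unitary $\tilde V_{d_N-1}$, exactly as in Theorem~\ref{th:cfreeness}; the $\psi^{{\bf v}_N}$-centering forces every block of $P_i(\bfA_N)$ to have vanishing normalized trace, hence $O(1/d_N)$ after $q$-compression, and the rank-one blocks are automatically almost centered, while a direct check shows two consecutive $\tilde V$'s are always separated by one of these matrices, which is never a nonzero scalar (a centered scalar block vanishes, a rank-one block is not scalar for $d_N>1$). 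Proposition~\ref{prop:freenesswithunitaries} then gives the desired $O(1/d_N)$ bound, producing $\mathbb{E}[\psi^{{\bf v}_N}_{\bfA_N,W_N\bfB_NW_N^{\star}}]=\psi_{\bfA_N}*\psi_{\bfB_N}+O(1/d_N)$ and $\mathbb{E}[\varphi^{{\bf v}_N}_{\bfA_N,W_N\bfB_NW_N^{\star}}]=\varphi_{\bfA_N}\prescript{}{\psi_{\bfA_N}\!\!}{*}_{\psi_{\bfB_N}}\varphi_{\bfB_N}+O(1/d_N)$.

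To upgrade this to almost-sure convergence I would run the concentration argument of Proposition~\ref{Lipschitz}: the maps $\tilde V\mapsto\psi^{{\bf v}_N}_{\bfA_N,W_N\bfB_NW_N^{\star}}(P)_{ij}$ and $\tilde V\mapsto\varphi^{{\bf v}_N}_{\bfA_N,W_N\bfB_NW_N^{\star}}(P)_{ij}$ on $U(d_N-1)$ are, by the non-commutative H\"older inequality, respectively $O(d_N^{-1/2})$- and $O(d_N^{1/4})$-Lipschitz for the Hilbert--Schmidt metric (using exponent $2$ for the rank-one / entry-extracting parts as in Proposition~\ref{Lipschitz}, and noting $\tilde V\mapsto I_p\otimes U_{d_N}$ is an isometry up to a factor $\sqrt{p}$), so Theorem~\ref{th:concentration_unitary} together with Borel--Cantelli finishes the proof. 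The step I expect to be the main obstacle is the block-level bookkeeping in the error estimate: one has to verify that \emph{every} deterministic matrix sandwiched between two $\tilde V_{d_N-1}$-factors — including those created by $\pi$-insertions at the unselected slots and by the routing across distinct $p$-blocks — meets the almost-centered hypothesis of Proposition~\ref{prop:freenesswithunitaries}, keeping in mind that the only traciality available is that of $\mathrm{tr}_{d_N}$ inside a single block (not of $\psi^{{\bf v}_N}$ itself), which is nonetheless enough to also handle monomials that are alternating without being cyclically alternating.
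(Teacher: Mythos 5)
Your proposal is correct and follows essentially the same route as the paper: both reduce the $\mathcal{M}_{p}(\mathbb{C})$-valued statement to the scalar Vortex model by expanding each centered $\mathcal{B}$-polynomial into its $p\times p$ blocks (each block having vanishing normalized trace), so that every block-entry of the alternating word becomes a scalar word in $d_N\times d_N$ deterministic matrices interleaved with the $d_N$-dimensional Haar unitary, to which the scalar conditional-freeness estimates and the concentration argument apply. The only cosmetic difference is that you re-run the $U_{d_N}=\varepsilon_1\varepsilon_1^{\star}+q\tilde V q^{\star}$ expansion and Proposition~\ref{prop:freenesswithunitaries} inline, whereas the paper simply cites the already-established scalar result after the block decomposition.
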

\begin{proof}
We prove the result holds in expectation, the almost sure statements follow from standard concentration arguments.
We pick an alternating word $(P_1,\ldots,P_k)$ on polynomials in $\mathbb{C}\langle \bfA,\bfB\rangle$, centered with respect to $\psi_{\bfA_N}*\psi_{\bfB_N}$ (Again, they do depend on $N$, and we suppose they have bounded degree and coefficients, which always holds for words obtained by centering with respect to $\psi_{\bfA_N}*\psi_{\bfB_N}$ a fixed, independent of $N$, alternating word $P_1,\ldots,P_k$ on polynomials in $\mathcal{M}(\mathbb{C})\langle \bfA,\bfB\rangle$). We want to prove: 
\begin{align*}
    &\mathbb{E}[\psi_{\bfA_N,U_N\bfB_NU_N^{\star}}](P) = o(1) \\
    &\mathbb{E}[\varphi_{\bfA_N,U_N\bfB_NU_N^{\star}}](P) = \varphi(P_1)\cdots \varphi(P_k) + o(1).
\end{align*}
The first estimate is standard. In fact,
$$
\mathbb{E}[\psi_{\bfA_N,U_N\bfB_NU_N^{\star}}(P)](i,j) = \mathbb{E}\left[\mathrm{tr}_{d_N}\Big((P_1(\bfA_N)U_NP_2(\bfB_N)U_N^{\star}\cdots)(i,j)\Big)\right]
$$
assuming without loss of generality that $P_1\in\mathcal{M}_{d,d}\langle \bfA \rangle$. The above equality follows, because the two bi-modules structure defined over $\mathcal{M}_N(\mathbb{C})$ commute with each other.
We write each polynomial $P_i({\bf X}_N)$ as a linear combination of elements in the  canonical basis of $\mathcal{M}_{p,p}(\mathbb{C})$ with coefficients in $\mathcal{M}_{d_N,d_N}(\mathbb{C})$
$$
P_i({\bf X}_N) = \sum_{a,b}A_i(a,b) \otimes E_{a,b}
$$
where $A_i(a,b)\in\mathcal{M}_{d_N,d_N}(\mathbb{C})$. One has 
\begin{equation*}
\mathbb{E}[\psi_{\bfA_N,U_N\bfB_NU_N^{\star}}(P)](i,j)=\mathbb{E}[\mathrm{tr}_{d_N}(A_1(i,k_1)U_NA_2(k_2,k_3)U_N^{\star}\cdots].
\end{equation*}
Since each $P_i$ is centered, each $A_i(a,b)$ is $1 \leq i\leq k$, $1 \leq a,b \leq d_N$
We may thus conclude by using our statement about scalar asymptotic conditional freeness, 
$$
\mathbb{E}[\mathrm{tr}_{d_N}(A_1(i,k_1)U_NA_2(k_1,k_2)U_N^{\star}\cdots]
$$
tends to $0$, for any choice of $i,k_1,k_2,\ldots$.
We move on to the second statement. By using the notations introduced so far,
\begin{equation*}
\mathbb{E}[\varphi_{\bfA_N,U_N\bfB_NU_N^{\star}}(P)](i,j) = \mathrm{tr}_{d_N}(\varepsilon^{\star}_1\varepsilon_1 A(i,k_1) U_N A_2(k_1,k_2) U_N^{\star} \cdots).
\end{equation*}
We may thus also conclude by using our result about scalar asymptotic conditional freeness.
\end{proof}
\begin{remark}
We have chosen a state vector computing the upper left coefficient of each block $M_N(i,j)$. Asymptotically conditional freeness holds in the more general situation where for each block $M_N(i,j)$ a vector $v_N^{i,j}$ is chosen and the state vector $\varphi$ compute the coefficient of $M_N(i,j)$ along $v_N^{i,j}$.
\end{remark}
\subsection{Rectangular model}
In this section, we elaborate on the previous model dealing with the convergence in high dimensions of the $\mathcal{M}_p(\mathbb{C})$-valued distribution obtained by regular -- square -- decomposition of randomly rotated deterministic matrices and taking the trace of each piece.
In \cite{benaych2009rectangular}, the author initiated studies about convergence in high dimensions of the singular spectrum of random rectangular matrices, that is the convergence of the spectrum of $\sqrt{MM^{\star}}$. The appropriate setting is again operator-valued free probability, with an amalgamation algebra generated by projectors.
We give the definitions introduced in \cite{benaych2009rectangular} which are necessary to state our results.
An operator-valued probability state is 
\begin{enumerate}
\item the data of complex unital algebra $\mathcal{A}$ together with a complete system of orthonormal projectors $(p_i)_{1\leq i \leq n}$;
\begin{equation*}
    \sum_{i=1}^n p_i = 1,\quad p_ip_j = \delta_{i=j}p_i.
\end{equation*}
We set $\mathcal{A}_{kl}:= p_k\mathcal{A}p_l$, $1 \leq k,l \leq n$ and call $\mathcal{D}$ the unital complex algebra generated by the projectors $p_k,~1 \leq k \leq n$.
\item A family of linear functionals $\psi_k:\mathcal{A}_{kk}\to \mathcal{D}$ such that 
\begin{align*}
    &\psi_k(p_k)=1,~\rho_{k} \psi_k(xy) = \rho_l \psi_l(yx), 1 \leq k,l \leq n, x \in \mathcal{A}_{k,l},y \in \mathcal{A}_{l,k}
\end{align*}
where $(\rho_1,\ldots,\rho_n)$ are non-negative real numbers. 
\end{enumerate}
The algebra $\mathcal{A}$ is a $\mathcal{D}$ in a canonical way : the left and right actions of $\mathcal{D}$ on $\mathcal{A}$ are given by left and right translations in $\mathcal{A}$. A conditional expectation $E:\mathcal{A}\to \mathcal{D}$ is built from the sequence of linear functionals $(\psi_k)_{1\leq k \leq n}$:
\begin{equation*}
    \psi(a) = \sum_{k=1}^n\psi_k(a)p_k,\quad a \in \mathcal{A}.
\end{equation*}
Given a sequence of operator-valued probability spaces $(\mathcal{A}_N,\rho^1_N,\ldots,\rho_N^n, \psi_N)$, we say a sequence of ensembles of random variables $(\{a^i_N,i \in I \})_{N\geq 1}$ in $\mathcal{A}_N$ converges in distribution toward an ensemble of random variables $\{a^i,i\in I\}$ in a rectangular probability space $(\mathcal{A}, E, \rho^1,\ldots,\rho^n)$ if, for any polynomial $P\in\mathcal{D}\langle a^i,~i\in I\rangle$, 
$$
\psi_N(P(\{a_N^i,~i\in I\})) \to_{N +\infty} \psi(P(\{a^i,~i\in I\}))
$$
and 
$$
(\rho_N^1,\ldots,\rho_N^n) \to_{N \to +\infty} (\rho^1,\ldots,\rho^n).
$$
\begin{example}
\label{ex:rectmatrix}
Let $\mathcal{M}_N(\mathbb{C})$ be the algebra of square matrices of dimensions $N\times N$. Let $q_1,\ldots,q_N$ a partition of $N$ and set $p_k$ be the projector onto the subspace generated $(\varepsilon_{q_1 + \cdots +  q_{i-1}},\ldots,\varepsilon_{q_1 + \cdots +  q_{i}})$ and set 
$$
\psi_k(M) = \frac{1}{q_k}\mathrm{Tr}(p_kMp_k),\quad M \in \mathcal{M}_N(\mathbb{C}). 
$$
Then with $\rho_k:=\frac{q_k}{N}$, $1 \leq k \leq N$ is a rectangular probability space.
\end{example}
The following proposition is a \emph{consequence of the proof} of Theorem 1.7 in \cite{benaych2009rectangular}: whereas the author does not provide the quantitative estimate on the convergence stated in the following proposition, they appear clearly in the proof of its Theorem, we therefore refer the reader to \cite{benaych2009rectangular} for the proof of the following proposition.
\begin{proposition}[Theorem 1.7 in \cite{benaych2009rectangular}]
\label{prop:rect}
Let $q_1(N)$, $q_2(N)$ be two sequences of integers tending to infinity
Pick two ensembles $\bfA_N$ and $\bfB_N$ of deterministic square matrices with dimensions $q_1(N)\times q_1(N)$ and $q_2(N)\times q_2(N)$, respectively. Pick two ensembles ${\bf D}_N$ and ${\bf E}_N$ of deterministic rectangular matrices with dimensions $q_1(N)\times q_2(N)$ and $q_2(N)\times q_1(N)$, respectively and consider the square matrix of size $N=q_1(N)+q_2(N)$:
\begin{equation*}
    {\bf M}_N := \begin{bmatrix}
                    {\bf A}_N & {\bf D}_N \\
                    {\bf E}_N & {\bf B}_N
              \end{bmatrix}
\end{equation*}
as an element of the rectangular probability space $(\mathcal{M}_N(\mathbb{C}), \psi, p_1,p_2)$ (see Example \ref{ex:rectmatrix}).
Let $V_N(1,1),V_N(2,2)$ be independent Haar unitary matrices with dimensions $q_1(N)\times q_1(N)$ and $q_2(N)\times q_2(N)$ respectively, and set
$$
U_N = \begin{bmatrix}
        V_N(1,1) & 0 \\
        0 & V_N(2,2)
      \end{bmatrix}.
$$
Then, as $N$ tends to infinity, the ensembles ${\bf M}_N$ and  $\{U_N,U_N^{\star}\}$ are asymptotically free with amalgamation over the algebra generated by $\mathcal{D}$.
Besides, for any polynomial $P$ with coefficient in $\langle p_1,p_2\rangle$:
\begin{equation*}
    \mathbb{E}[\psi(P({\bf M}_N,U_N,U_N^{\star}))] = \psi_{{\bf M}_N} * \mathbb{E}[\psi_{U_N}] + O\left(\frac{p_1}{[q_1(N)]^2} + \frac{p_2}{[q_2(N)]^2}\right)
\end{equation*}
where $O\left(\frac{p_1}{[q_1(N)]^2} + \frac{p_2}{[q_2(N)]^2}\right)$ is uniform in the ensembles ${\bf M}_N$ bounded in operator norm by $R$.
\end{proposition}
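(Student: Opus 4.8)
The plan is to follow the moment method of Benaych-Georges \cite{benaych2009rectangular}, augmented by a genus-type estimate to extract the quantitative rate. By $\mathcal{D}$-multilinearity and the definition of the amalgamated free product, it suffices to control $\mathbb{E}[\psi(w)]$ for $\mathcal{D}$-alternating words $w=L_1 U_N^{\varepsilon_1}L_2 U_N^{\varepsilon_2}\cdots L_m U_N^{\varepsilon_m}$, where the $L_i$ are $\mathcal{D}$-polynomials in ${\bf M}_N$ centered with respect to $\psi_{{\bf M}_N}$, $\varepsilon_i\in\{+1,-1\}$, and any projector $p_k$ occurring in $w$ has been absorbed into the $\mathcal{D}$-bimodule structure. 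The first thing I would exploit is that $U_N$ is block-diagonal, so $p_kU_N^{\pm1}=V_N(k,k)^{\pm1}p_k$: expanding each entry of $\psi(w)\in\mathcal{M}_2(\mathbb{C})$ along the canonical basis of the two blocks reduces the problem to a sum of block-restricted normalized traces of the form $\frac{1}{q_k(N)}\mathrm{Tr}(p_kN_1V_N(k_1,k_1)^{\delta_1}N_2\cdots)$, i.e.\ to alternating words in deterministic matrices and independent Haar unitaries, exactly as in the scalar theory, except that the summation indices now carry a block label — and it is precisely this bookkeeping of block labels that encodes amalgamation over $\mathcal{D}$.

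Next I would apply the Weingarten calculus to the independent Haar unitaries $V_N(1,1)\in U(q_1(N))$ and $V_N(2,2)\in U(q_2(N))$, writing each such expectation as a double sum over permutations $\sigma,\tau$ weighted by a product of Weingarten functions $\mathrm{Wg}_{q_k}(\sigma_k\tau_k^{-1})$; the combinatorial data that remains is a pair of permutations together with the record of which block each index lies in. I expect the leading (planar) contributions to reconstruct exactly $\psi_{{\bf M}_N}*\mathbb{E}[\psi_{U_N}](P)$ via the usual matching of Weingarten combinatorics with amalgamated free cumulants: this is the step where freeness with amalgamation over $\mathcal{D}$ gets verified, the centering of the $L_i$ annihilating every surviving planar configuration but the trivial one, so that $\psi$ vanishes on $\mathcal{D}$-centered alternating words at leading order.

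The hard part will be estimating the subleading terms and bounding them by $O(q_1(N)^{-2}+q_2(N)^{-2})$ uniformly over ensembles ${\bf M}_N$ bounded in operator norm by $R$. Here I would use the fact, recalled above and going back to \cite{mingo2007second}, that geodesic deviations in the symmetric group occur at even order, $d(\mathrm{id},\tau)+d(\tau,\sigma)-d(\mathrm{id},\sigma)\in 2\mathbb{N}$, together with the $q^{-n-|\pi|}(1+O(q^{-2}))$ asymptotics of $\mathrm{Wg}_q(\pi)$; combined with the operator-norm bound (which keeps each normalized entrywise sum $O(1)$), every index-matching other than the planar one is then suppressed by a factor $q_1(N)^{-2}$ or $q_2(N)^{-2}$. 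This yields the announced expansion $\mathbb{E}[\psi(P({\bf M}_N,U_N,U_N^\star))]=\psi_{{\bf M}_N}*\mathbb{E}[\psi_{U_N}]+O(q_1(N)^{-2}+q_2(N)^{-2})$, with a constant depending only on $P$ and $R$, and in particular the asymptotic freeness with amalgamation. If an almost-sure statement is also desired, it follows as usual from the concentration inequality of Theorem~\ref{th:concentration_unitary} applied to each $V_N(k,k)$ together with Borel--Cantelli, each $\psi$-entry being $O(q_k(N)^{-1/2})$-Lipschitz in $(V_N(1,1),V_N(2,2))$ for the Hilbert--Schmidt metric. For the detailed combinatorial bookkeeping in the rectangular setting I would refer to \cite{benaych2009rectangular}.
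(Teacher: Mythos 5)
The paper does not actually prove this proposition: it states explicitly that the result, including the quantitative $O(q_1(N)^{-2}+q_2(N)^{-2})$ rate, is read off from the proof of Theorem 1.7 in \cite{benaych2009rectangular}, and refers the reader there. Your sketch (reduction to $\mathcal{D}$-alternating centered words, block-diagonality of $U_N$, Weingarten calculus with the even-parity genus argument for the error, uniformity from operator-norm bounds) is precisely the moment-method argument of that reference, so it is essentially the same route as the paper's source; the only caveat is that the decisive combinatorial bookkeeping is the very step you also defer to \cite{benaych2009rectangular}, so the proposal is no more self-contained than the paper's citation.
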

Let $v_N$ be a sequence of deterministic vectors in $\mathbb{C}^{q_1(N)}$. We define $\varphi^{v_n}: \mathcal{M}_{N}(\mathbb{C}) \to \langle p_1,p_2\rangle$ by
\begin{equation*}
    \varphi^{v_N}(M_N)=\mathrm{Tr}(p_{v_N}M_Np_{v_N})p_1.
\end{equation*}
It is not difficult to check that $\varphi^{v_N}$ is a $\mathcal{D}$-bimodule morphism.
\begin{proposition}
Let 
\begin{equation*}
    {\bf M}_N = \begin{bmatrix} \bfA_N & \bfB_N \\ {\bf C}_N & {\bf D}_N
    \end{bmatrix},\quad {\bf M}^{\prime}_N = \begin{bmatrix} \bfA^{\prime}_N & \bfB^{\prime}_N \\ {\bf C}^{\prime}_N & {\bf D}^{\prime}_N
    \end{bmatrix}
\end{equation*}
be two ensembles of deterministic matrices in the rectangular probability space of example \ref{ex:rectmatrix}.
Let $U_N(1,1)$ be a Haar random unitary matrix stabilizing $v_N$ and $U_N(2,2)$ be an independent Haar random unitary, and set 
$$
U_N = \begin{bmatrix}
        U_N(1,1) & 0 \\
        0        & U_N(2,2)
      \end{bmatrix}.
$$
We assume that as $N\to +\infty$, $q_i(N)$ tends to infinity. We have almost surely, for any polynomial $P\in\mathcal{D}\langle {\bf M},{\bf M}^{\prime}\rangle$
\begin{align*}
    &\psi[P({\bf M}_N,U_N{\bf M}^{\prime}_NU_N^{\star})] = \psi_{{\bf M}_N} * \psi_{{\bf M}^{\prime}_N} + o(1) \\
    &\varphi^{v_N}(P({\bf M}_N,U_N{\bf M}_N^{\prime}U_N^{\star})) = \varphi_{{\bf M}_N}^{v_N} \prescript{}{\psi_{{\bf M}_N}}{*}_{\psi_{{\bf M}^{\prime}_N}} \varphi_{{\bf M}^{\prime}_N}^{v_N} + o(1).
\end{align*}
\end{proposition}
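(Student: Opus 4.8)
The argument follows the proof of Theorem~\ref{th:cfreeness} essentially verbatim, the role of Proposition~\ref{prop:freenesswithunitaries} being played by the quantitative (uniform, no convergence assumed) rectangular asymptotic freeness of Proposition~\ref{prop:rect}. Since $\psi$ and $\varphi^{v_N}$ are $\mathcal{D}$-bimodule morphisms and all the estimates below are uniform in the coefficients and degrees (the ensembles being, as usual, bounded in operator norm uniformly in $N$), it suffices, exactly as there, to treat a word
$$P = P_1({\bf M})Q_1({\bf M}')\cdots P_s({\bf M})Q_s({\bf M}')$$
with coefficients in $\mathcal{D}$ which is alternating and centred with respect to $\psi_{{\bf M}_N}*\psi_{{\bf M}'_N}$ (so that $\psi_{{\bf M}_N}*\psi_{{\bf M}'_N}(P) = 0$ in $\mathcal{D}$ by amalgamated freeness). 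Without loss of generality $v_N = \varepsilon_1$, the first vector of the canonical basis of $\mathbb{C}^{q_1(N)} \subset \mathbb{C}^{N}$, and then
$$U_N = v_Nv_N^{\star} + p\,V_{N-1}\,p^{\star},$$
where $p$ is the $N \times (N-1)$ isometry onto $\langle v_N\rangle^{\perp}$ and $V_{N-1} = \mathrm{diag}\big(V_{q_1(N)-1},\,U_N(2,2)\big)$ is block-diagonal with independent Haar blocks of sizes $q_1(N)-1$ and $q_2(N)$ --- that is, a unitary of exactly the kind treated by Proposition~\ref{prop:rect}, now inside the rectangular probability space of Example~\ref{ex:rectmatrix} attached to the partition $N-1 = (q_1(N)-1) + q_2(N)$.

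Expanding each occurrence of $U_N$ as $v_Nv_N^{\star} + pV_{N-1}p^{\star}$ gives, exactly as in \eqref{eq:lesRS}, a decomposition $P({\bf M}_N, U_N{\bf M}'_NU_N^{\star}) = \sum_{S \subset \{1,\dots,s\}} R_S$. Using $v_Nv_N^{\star}M v_Nv_N^{\star} = \langle v_N, M v_N\rangle\, v_Nv_N^{\star}$, the term $S = \emptyset$ is, up to the usual endpoint reductions, a scalar multiple $\varphi^{v_N}_{{\bf M}_N}(P_1)\varphi^{v_N}_{{\bf M}'_N}(Q_1)\cdots \varphi^{v_N}_{{\bf M}'_N}(Q_s)$ of $v_Nv_N^{\star}$; applying $\varphi^{v_N}$ to it reproduces the c-free product $\varphi^{v_N}_{{\bf M}_N}\prescript{}{\psi_{{\bf M}_N}}{*}^{}_{\psi_{{\bf M}'_N}}\varphi^{v_N}_{{\bf M}'_N}(P)\, p_1$, while applying $\psi$ to it gives $O(q_1(N)^{-1})$ since $v_Nv_N^{\star}$ has rank one and $p_2 v_N = 0$. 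For $S \neq \emptyset$ one rewrites $R_S$, after a cyclic permutation inside $\Trn$ and compression by $p$, as a word in $V_{N-1}, V_{N-1}^{\star}$ and the compressed deterministic matrices $p^{\star}(\cdots)p$; all the latter have $\mathcal{D}$-valued trace of order $O(q_1(N)^{-1})$ --- the compressions of the centred factors $P_i({\bf M}_N),Q_i({\bf M}'_N)$ because the $\mathcal{D}$-valued trace of $p^{\star}P_i({\bf M}_N)p$ at dimension $N-1$ differs from $\psi_{{\bf M}_N}(P_i)=0$ only by the removed direction $v_N$ (as in the computation $\mathrm{Tr}_{N-1}(p^{\star}P_1({\bf A}_N)p)/(N-1) = -\varphi^{v_N}(P_1({\bf A}_N))/(N-1)$ of the proof of Theorem~\ref{th:cfreeness}), and the rank-one ones trivially. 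Proposition~\ref{prop:rect} --- from which the amalgamated version of Lemma~\ref{lem:almostcentered} follows by the same Kreweras-type argument with amalgamated free cumulants, exactly as Proposition~\ref{prop:freenesswithunitaries} followed from Lemma~\ref{lem:almostcentered} --- then yields $\mathbb{E}[\psi(R_S)] = o(1)$ and $\mathbb{E}[\varphi^{v_N}(R_S)] = o(1)$ for every $\emptyset \neq S \subset \{1,\dots,s\}$ (for $\varphi^{v_N}$ one repeats the case analysis of the proof of Theorem~\ref{th:cfreeness}; the term $S = \{1,\dots,s\}$ even vanishes identically since $U_N^{\perp}v_N = (U_N - v_Nv_N^\star)v_N = 0$, and the cases where a factor $v_Nv_N^{\star}$ is annihilated by $p^{\star}v_N = 0$ are likewise immediate). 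Summing the contributions gives, for such $P$,
$$\mathbb{E}\big[\psi(P({\bf M}_N, U_N{\bf M}'_NU_N^{\star}))\big] = \psi_{{\bf M}_N}*\psi_{{\bf M}'_N}(P) + o(1),$$
$$\mathbb{E}\big[\varphi^{v_N}(P({\bf M}_N, U_N{\bf M}'_NU_N^{\star}))\big] = \varphi^{v_N}_{{\bf M}_N}\prescript{}{\psi_{{\bf M}_N}}{*}^{}_{\psi_{{\bf M}'_N}}\varphi^{v_N}_{{\bf M}'_N}(P) + o(1),$$
and the general polynomial $P \in \mathcal{D}\langle {\bf M},{\bf M}'\rangle$ follows by the decomposition of the first paragraph, using that $\psi$, although only $\rho$-tracial on the diagonal blocks, still allows the reduction of $\psi$ of a polynomial to $\psi$ of alternating centred words, as in the proof of Theorem~\ref{th:cfreeness}.

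Finally, the passage to almost-sure convergence is identical to that in Proposition~\ref{Lipschitz} and Theorem~\ref{th:cfreeness_almost_sure}: on the compact group $U(q_1(N)-1)\times U(q_2(N))$ the maps
$V \mapsto \psi(P({\bf M}_N,(v_Nv_N^{\star}+pVp^{\star}){\bf M}'_N(v_Nv_N^{\star}+pVp^{\star})^{\star}))$ and $V \mapsto \varphi^{v_N}(\,\cdots\,)$
are Lipschitz for the Hilbert--Schmidt metric with constants vanishing as $N \to \infty$ (of order $O(\min(q_1(N),q_2(N))^{-1/2})$ and $O(\min(q_1(N),q_2(N))^{-3/4})$ respectively), by the non-commutative Hölder inequality as in the proof of Proposition~\ref{Lipschitz}; Theorem~\ref{th:concentration_unitary} applied to each Haar factor together with the Borel--Cantelli lemma then upgrades the two convergences in expectation to almost-sure ones. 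The one point that genuinely goes beyond the scalar Vortex model is the negligibility of the cross terms $R_S$ with $S \neq \emptyset$: this is exactly where the quantitative rectangular freeness of Proposition~\ref{prop:rect} must replace the scalar Proposition~\ref{prop:freenesswithunitaries}, and one must keep track of the interplay between the rank-one factors $v_Nv_N^{\star}$, the block compressions $p^{\star}(\cdots)p$, and the $\mathcal{D}$-valued, only $\rho$-tracial, state $\psi$.
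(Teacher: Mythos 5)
Your proof is correct and follows essentially the same route as the paper's: expand $U_N$ around $v_Nv_N^{\star}$, isolate the main term giving the c-free product, kill the cross terms by combining the quantitative rectangular freeness of Proposition~\ref{prop:rect} with the almost-centered Lemma~\ref{lem:almostcentered} as in the scalar Vortex model, and upgrade to almost-sure convergence by concentration on each Haar factor. Your write-up is in fact more detailed than the paper's, which compresses the cross-term analysis into a single sentence.
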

\begin{proof}
We proceed as in the previous section. We pick an alternated word $P_1,\ldots,P_k$ on polynomials in $\mathcal{D}\langle {\bf M},{\bf M}^{\prime}\rangle$ centered with respect to $\psi_{{\bf M}_N} * \psi_{{\bf M^{\prime}}_N}$ (the $P_i's$ do depend on $N$, but has bounded coefficients). We have to show that 
$$
\mathbb{E}[\varphi^{v_N}(P_1\cdots P_k({\bf M}_N,U_N{\bf M}_N U_N^{\star}))] = \varphi^{v_N}(P_1(M_N))\cdots \varphi^{v_N}(P_k({\bf M^{\prime}}_N)) + o(1).
$$
Again, we write $U_N(1,1)=v_Nv_N^{\star} + pV_N(1,1)p^{\star}$ where $V_N(1,1)$ is a $(q_1(N)-1)\times (q_1(N)-1)$ Haar unitary random matrix). 
Since $U_N$ is diagonal and commute therefore with the projectors $p_1,p_2$, 
\begin{align*}
    &\mathbb{E}[\varphi^{v_N}(p_{1}P_1\cdots P_k({\bf M}_N,U_N{\bf M}_N U_N^{\star})p_{1})] \\
    &=\mathbb{E}[\varphi^{v_N}(p_{1}P_1({\bf M}_N)\cdots U_NP_k({\bf M}^{\prime}_N) U_N^{\star}p_{1})] & \hspace{-1cm}(\textrm{ or } \mathbb{E}[\varphi^{v_N}(p_{1}P_1({\bf M}_N)U_N\cdots P_k({\bf M}_N)p_{1})] \\
    &= \mathbb{E}[p_{v_N}P_1({\bf M}_N)p_{v_N}\cdots p_{v_N}P_k({\bf M}^{\prime}_N)p_{v_{N}})] + R(N,{\bf M}_N,{\bf M_N^{\prime}})\\
    &= \varphi^{v_N}(P_1({\bf M}_N))\varphi^{v_N}(P_2({\bf M}^{\prime}_N))\cdots \varphi^{v_N}(P_k({\bf M}^{\prime}_N)) + R(N,{\bf M}_N,{\bf M_N^{\prime}}).
\end{align*}
We have also
\begin{align*}
    &\mathbb{E}[\varphi^{v_N}(p_{i}P_1\cdots P_k({\bf M}_N,U_N{\bf M}_N U_N^{\star})p_{j})] =0
\end{align*}
for any pairs $(i,j)\in  \{1,2\}^2 \backslash \{(1,1)\} $.
We then combine Proposition \ref{prop:rect} and Lemma \ref{lem:almostcentered} following the same line of arguments as in the scalar case to obtain $R(N,{\bf M}, {\bf M^{\prime}})$ tends to zero. This concludes the proof.
\end{proof}
\begin{remark}
The attentive reader may have noticed that in contrary to the scalar case, the vector state $\varphi$ compute the coefficient of a matrix $M_N$ along a single direction $v_N\in \mathbb{C}^{q_1(N)}$. An alternative to $\varphi^{{\bf v_N}}$, close to the setting of the previous section would involve a second linear functional $\varphi^{w_n}$ computing the coefficient of $M_N$ along a direction $w_N \in \mathbb{C}^{q_2(N)}$. With 
$$
\varphi^{v_N,w_N}(M_N):=\varphi^{v_N}(M_N)p_1 + \varphi^{w_N}(M_N)p_2.
$$
it is possible to show, as $N$ tends to infinity, and for any sequences of alternated words on polynomials centered with respect to $\psi_{{\bf M}_N}*\psi_{{\bf M}^{\prime}_N}$ and bounded uniformly in $N$ following the line of arguments of the previous proof, that 
$$
\mathbb{E}[\varphi^{v_N,w_N}(P_1\cdots P_k({\bf M}_N,U_N{\bf M}_N U_N^{\star}))]p_1 = \sum_{x_1,\ldots,x_{k-1} \in \{v,w\}} \mathrm{Tr}(p_{v_N}P_1({\bf M}_N)p_{x^{1}_N}P_2({\bf M^{\prime}_N})p_{x^{2}_N} \cdots P_k({\bf M^{\prime}}_N)p_{v_N}).
$$
\end{remark}

\bibliographystyle{plain}
\bibliography{biblio.bib}

\end{document}